\let\emptyset\varnothing
\newcommand{\NN}{\mathbb{N}}
\newcommand{\ZZ}{\mathbb{Z}}
\newcommand{\PP}{\mathbb{P}}
\newcommand{\cC}{\mathcal{C}}
\newcommand{\cD}{\mathcal{D}}
\newcommand{\cE}{\mathcal{E}}
\newcommand{\cF}{\mathcal{F}}
\newcommand{\cK}{\mathcal{K}}
\newcommand{\cT}{\mathcal{T}}
\newcommand{\cW}{\mathcal{W}}
\newcommand{\cX}{\mathcal{X}}
\newcommand{\cI}{\mathcal{I}}
\newcommand{\cP}{\mathcal{P}}
\newcommand{\cR}{\mathcal{R}}
\DeclareMathOperator{\fF}{\mathfrak{F}}
\DeclareMathOperator{\res}{res}
\DeclareMathOperator{\Rad}{Rad}
\DeclareMathOperator{\rad}{rad}
\DeclareMathOperator{\rep}{rep}
\DeclareMathOperator{\ind}{ind}
\DeclareMathOperator{\add}{add}
\DeclareMathOperator{\Char}{char}
\DeclareMathOperator{\Hom}{Hom}
\DeclareMathOperator{\coker}{coker}
\DeclareMathOperator{\Gr}{Gr}
\DeclareMathOperator{\modd}{mod}
\DeclareMathOperator{\Ext}{Ext}
\DeclareMathOperator{\Soc}{Soc}
\DeclareMathOperator{\im}{im}
\DeclareMathOperator{\End}{End}
\DeclareMathOperator{\GL}{GL}
\DeclareMathOperator{\EIP}{EIP}
\DeclareMathOperator{\EKP}{EKP}
\DeclareMathOperator{\ESP}{ESP}
\DeclareMathOperator{\ERP}{ERP}
\DeclareMathOperator{\CSR}{CSR}
\DeclareMathOperator{\CRR}{CRR}
\DeclareMathOperator{\CR}{CR}
\DeclareMathOperator{\dimu}{\underline{dim}}
\let\emptyset\varnothing
\newtheorem{proposition}{Proposition}[subsection]
\newtheorem{Theorem}[proposition]{Theorem}
\newtheorem{Lemma}[proposition]{Lemma}
\newtheorem{corollary}[proposition]{Corollary}
\newtheorem*{TheoremN}{Theorem}
\newtheorem*{corollaryN}{Corollary}
\newtheorem{TheoremS}{Theorem}[section]
\newtheorem{corollaryS}[TheoremS]{Corollary}
\newenvironment{example}[1][Example.]{\begin{trivlist}
\item[\hskip \labelsep {\bfseries #1}]}{\end{trivlist}}
\newenvironment{examples}[1][Examples.]{\begin{trivlist}
\item[\hskip \labelsep {\bfseries #1}]}{\end{trivlist}}
\newenvironment{Remark}[1][Remark.]{\begin{trivlist}
\item[\hskip \labelsep {\bfseries #1}]}{\end{trivlist}}
\newenvironment{Remarks}[1][Remarks.]{\begin{trivlist}
\item[\hskip \labelsep {\bfseries #1}]}{\end{trivlist}}
\newenvironment{Definition}[1][Definition.]{\begin{trivlist}
\item[\hskip \labelsep {\bfseries #1}]}{\end{trivlist}}
\begin{document}

\title{Representations of constant socle rank for the Kronecker algebra}
\author{Daniel Bissinger}
\address{Christian-Albrechts-Universit\"at zu Kiel, Ludewig-Meyn-Str. 4, 24098 Kiel, Germany}
\email{bissinger@math.uni-kiel.de}
\date{}
\thanks{Partly supported by the D.F.G. priority program SPP 1388  ``Darstellungstheorie''}
\maketitle

\begin{abstract} 
Inspired by recent work of Carlson, Friedlander and Pevtsova concerning modules for $p$-elementary abelian groups $E_r$ of rank $r$ over a field of characteristic $p > 0$, we introduce the notions of modules with constant $d$-radical rank and modules with constant $d$-socle rank for the generalized Kronecker algebra $\cK_r = k\Gamma_r$ with $r \geq 2$ arrows and $1 \leq d \leq r-1$.\\
We study subcategories given by modules with the equal $d$-radical property and the equal $d$-socle property. Utilizing the Simplification method due to Ringel, we prove that these subcategories in $\modd \cK_r$ are of wild type. Then we use a natural functor $\fF \colon \modd \cK_r \to \modd kE_r$ to transfer our results to $\modd kE_r$.
 \end{abstract}

\thispagestyle{empty} 

\section*{Introduction}

Let $r \geq 2$, $k$ be an algebraically closed field of characteristic $p > 0$ and $E_r$ be a $p$-elementary abelian group of rank $r$. It is well known that the category of finite-dimensional $kE_r$-modules $\modd kE_r$ is of wild type, whenever $p \geq 3$ or $p = 2$ and $r > 2$. Therefore subclasses with more restrictive properties have been studied; in  \cite{CFS1}, the subclass of modules of constant rank $\CR(E_r)$ and modules with even more restrictive properties, called equal images property and equal kernels property, were introduced.\\
In \cite{Wor1} the author defined analogous categories $\CR$, $\EIP$ and $\EKP$ in the context of the generalized Kronecker algebra $\cK_r$, and in more generality for the generalized Beilison algebra $B(n,r)$. Using a natural functor $\fF \colon \modd \cK_r \to \modd kE_r$ with nice properties, she gave new insights into the categories of equal images and equal kernels modules for $\modd kE_r$ of Loewy length $\leq 2$. A crucial step is the description of $\CR$, $\EIP$ and $\EKP$ in homological terms, involving a family $\PP^{r-1}$-family of regular "test"-modules.\\
Building on this approach, we show that the recently introduced modules \cite{CFP1} of constant socle rank and constant radical rank can be described in the same fashion. For $1 \leq d < r$ we introduce modules of constant $d$-radical rank $\CRR_d$ and constant $d$-socle rank $\CSR_d$ in $\modd \cK_r$. More restrictive - and also easier to handle - are modules with the equal $d$-radical property $\ERP_d$ and the equal $d$-socle property $\ESP_d$. For $d = 1$ we have $\ESP_1 = \EKP$, $\ERP_1 = \EIP$ and $\CSR_1 = \CR = \CRR_1$. Studying these classes in the hereditary module category $\modd \cK_r$ allows us to use tools not available in $\modd kE_r$. \\
As a first step, we establish a homological characterization of  $\CSR_d,\CRR_d,\ESP_d$ and $\ERP_d$. We denote by $\Gr_{d,r}$ the Grassmanian of $d$-dimensional subspaces of $k^r$. In generalization of \cite{Wor1}, we define a $\Gr_{d,r}$-family of  "test"-modules $(X_U)_{U \in \Gr_{d,r}}$ and show that this family can be described in purely  combinatorial terms by being indecomposable of dimension
vector $(1,r-d)$. This allows us to construct many examples of modules of equal socle rank in $\modd \cK_s$ for $s \geq 3$ by considering pullbacks along natural embeddings $\cK_r \to \cK_s$.

\indent Since $\cK_r$ is a wild algebra for $r > 2$ and every regular component in the Auslander-Reiten quiver of $\cK_r$ is of type $\ZZ A_\infty$, it is desirable to find invariants that give more specific information about the regular components. It is shown in \cite{Wor1} that there are uniquely determined quasi-simple modules $M_\cC$ and $W_\cC$ in $\cC$ such that
the cone $(M_\cC \rightarrow)$ of all successors of $M_\cC$ satisfies $(M_\cC \rightarrow) = \EKP \cap \cC$ and the cone $(\rightarrow W_\cC)$ of all predecessors of $W_\cC$ satisfies
$(\rightarrow W_\cC)= \EIP \cap \cC$. 
Using results on elementary modules, we generalize this statement for $\ESP_d$ and $\ERP_d$. Our main results may be summarized as follows:
\begin{TheoremN} Let $r \geq 3$ and $\cC$  be  a regular component of the Auslander-Reiten quiver of $\cK_r$.
\begin{enumerate}[topsep=0em, itemsep= -0em, parsep = 0 em, label=$(\alph*)$]
\item For each $1 \leq i < r$ the category $\Delta_i := \ESP_i \setminus \ESP_{i-1}$ is wild, where $\ESP_0 := \emptyset$.
\item For each $1 \leq i < r$ there exists a unique quasi-simple module $M_i$ in $\cC$ such that $\ESP_i \cap \cC = (M_i \to)$.
\item There exists at most one number  $1 < m(\cC) < r$ such that $\Delta_{m(\cC)} \cap \cC$ is non-empty. If such a number exists $\Delta_{m(\cC)} \cap \cC$ is the ray starting in $M_{m(\cC)}$. 
\end{enumerate} 
\end{TheoremN}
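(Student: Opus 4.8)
The plan is to establish (b) first, to read off (a) via Ringel's Simplification, and to obtain (c) as a numerical refinement of (b). Throughout I label the indecomposables of $\cC$ by intervals, so that $C_p=M_{[p,p]}$ are the quasi-simples and $M_{[p,q]}$ denotes the module with regular socle $C_p$ and quasi-length $q-p+1$, with $\tau M_{[p,q]}=M_{[p-1,q-1]}$.

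For (b) I would start from the homological description $\dim_k\Hom(X_U,M)=\dim_k s_U(M)$, where $s_U(M):=\bigcap_{\alpha\in U}\ker x_\alpha\subseteq M_1$ and $x_\alpha$ is the pencil map attached to $\alpha$. Two formal properties are immediate: $s_U(M)\supseteq s_{U'}(M)$ for $U\subseteq U'$, and $s_U(N)=N_1\cap s_U(M)$ for a submodule $N\le M$. The latter shows that $\ESP_d$ is closed under submodules and yields the chain $\emptyset=\ESP_0\subseteq\ESP_1\subseteq\cdots\subseteq\ESP_{r-1}$. Since the $X_U$ are elementary of dimension vector $(1,r-d)$, I would feed them into the elementary-module machinery exactly as Worch does for $\EKP=\ESP_1$: along a ray $(M_{[p,q]})_{q\ge p}$ the number $\dim_k\Hom(X_U,M_{[p,q]})$ is monotone and the connecting maps $\Hom(X_U,C_{q+1})\to\Ext^1(X_U,M_{[p,q]})$ eventually vanish, so that $\ESP_d$-membership of $M_{[p,q]}$ depends only on the foot $p$. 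The downward implication ($C_p\notin\ESP_d\Rightarrow M_{[p,q]}\notin\ESP_d$) is submodule-closure applied to the regular socle $C_p\le M_{[p,q]}$; the upward one is the genuine elementary-modules input. Monotonicity along the $\tau^{-1}$-orbit then identifies the admissible feet with an up-set $\{p\ge a_d\}$, whence $\ESP_d\cap\cC=(C_{a_d}\to)=(M_d\to)$ with $M_d$ the unique quasi-simple generating the cone.

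For (a) I would use Simplification. Fix a quasi-simple regular brick $E\in\Delta_i$ with $\langle\dimu E,\dimu E\rangle\le-1$, so that $\dim_k\Ext^1(E,E)=1-\langle\dimu E,\dimu E\rangle\ge2$; such $E$ exist in suitable components for every $1\le i<r$. The category of $E$-filtered modules is then equivalent, by Ringel's Simplification, to the modules over the wild local algebra determined by $\Ext^1(E,E)$. Every nonzero such module contains $E$ as a submodule, so submodule-closure of $\ESP_{i-1}$ together with $E\notin\ESP_{i-1}$ keeps the whole family out of $\ESP_{i-1}$. The delicate point, and the main obstacle for (a), is that $\ESP_i$ is \emph{not} extension-closed (only the sub-property ``all $x_\alpha$ injective'' is); to keep the family inside $\ESP_i$ I would restrict Simplification to the subspace of self-extensions that are trivial on the common $i$-socle $s_U(E)$. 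For $E$ with large enough self-extension this subspace is still at least two-dimensional, hence still wild, and the resulting modules retain the equal $i$-socle.

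For (c) I would run (b) backwards numerically. Writing $M_d=C_{a_d}$ with $a_1\ge\cdots\ge a_{r-1}$, one has $\Delta_m\cap\cC=(C_{a_m}\to)\setminus(C_{a_{m-1}}\to)$, which is nonempty exactly when $a_{m-1}>a_m$ and equals the ray at $C_{a_m}$ exactly when $a_{m-1}=a_m+1$; thus (c) is equivalent to $a_1-a_{r-1}\le1$. For a quasi-simple $C$ with $\bigcap_\alpha\ker x_\alpha=0$ the equal-socle condition reads $C\in\ESP_d\iff\mu(C)\ge r+1-d$, where $\mu(C)=\min_{0\ne m}\dim_k\langle f_1m,\dots,f_rm\rangle$ is the minimal rank in the attached pencil; in particular, for $\dimu C=(a,b)$, one has $\mu(C)=r\iff a\le b-r+1$ and $\mu(C)\ge2\iff a\le(b-1)(r-1)$. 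Step (b) makes $p\mapsto\mu(C_p)$ non-decreasing, so it suffices to show that $\mu(C_p)\ge2$ already forces $\mu(C_{p+1})=r$; using the Coxeter recursion $\dimu\tau^{-1}C=(-a+rb,\,-ra+(r^2-1)b)$, a direct inequality check shows that ``$\mu(C)\ge2$ and $\mu(\tau^{-1}C)\le r-1$'' forces $b<0$, which is absurd. Hence at most one quasi-simple of $\cC$ carries an intermediate value and $a_1-a_{r-1}\le1$. The main obstacle here is that this computation uses the generic value of $\mu$, so the real work is to prove that quasi-simple regular modules and their $\tau$-shifts are general enough for the socle rank to attain the value predicted by the dimension vector — equivalently that $C\in\ESP_{r-1}$ forces $\tau^{-1}C\in\EKP$ — which is again supplied by the theory of elementary modules.
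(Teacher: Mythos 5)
Your treatment of (b) is essentially the paper's: the $X_U$ are elementary of bounded dimension, and $\ESP_i\cap\cC$ is the cone of successors of a unique quasi-simple by the general statement about $\{M\mid\Hom(E,M)=0\ \forall E\in\cE\}$ for such families (Propositions \ref{Cone2} and \ref{HomTau}). The gaps are in (a) and (c). For (a), your ``main obstacle'' is not an obstacle: $\ESP_i$ \emph{is} closed under extensions, since by Proposition \ref{HomChar} it is the right $\Hom$-orthogonal of $\{X_U\mid U\in\Gr_{i,r}\}$ and the vanishing of $\Hom(X_U,-)$ passes to extensions by left-exactness. Your proposed repair --- running Simplification only over the subspace of self-extensions ``trivial on the common socle'' --- is therefore unnecessary, and it is also not a well-defined variant of Ringel's process: the $E$-filtered category is governed by all of $\Ext(E,E)$, and restricting to a subspace neither obviously produces an extension-closed abelian subcategory nor comes with a wildness guarantee. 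The genuine content of (a), which you assert rather than prove, is the \emph{existence} of a quasi-simple regular brick $E$ lying in $\Delta_i$ for $i\ge 2$. The paper constructs it: with $r:=s-i+1$, the indecomposable projective $P_2\in\modd\cK_r$ inflates to a regular quasi-simple brick $\inf(P_2)\in\modd\cK_s$ which lies in $\ESP_i$ by Proposition \ref{goingup} and which is isomorphic to $X_V$ for some $V\in\Gr_{i-1,s}$, hence fails $\ESP_{i-1}$ because $\End(X_V)\neq0$. Without this (or an equivalent construction) your argument for (a) does not start.

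For (c) you correctly isolate the key implication --- $N\in\ESP_{r-1}$ forces $\tau^{-1}N\in\ESP_1$ --- but the numerical route you propose does not establish it. The equivalences such as ``$\mu(C)=r\iff a\le b-r+1$'' describe what the dimension vector \emph{permits}, not the actual minimal rank of the pencil attached to a given quasi-simple module; you acknowledge that the needed genericity is unproven and defer it to ``the theory of elementary modules'' without supplying the argument. The paper proves the implication directly and without dimension vectors: if $M\notin\ESP_1$, choose $U\in\Gr_{1,r}$ with $\Hom(X_U,M)\neq0$, hence a non-zero map $\tau X_U\to\tau M$; the quotient $N$ of $X_U$ with $\dimu N=(1,1)$ satisfies $DN\cong X_V$ for some $V\in\Gr_{r-1,r}$, and $DX_U\cong\tau X_U$ yields a non-zero map $X_V\to\tau X_U$; Lemma \ref{CompElementary} (a non-zero map into and a non-zero map out of an elementary module, between regular modules, compose non-trivially) then gives $\Hom(X_V,\tau M)\neq0$, i.e.\ $\tau M\notin\ESP_{r-1}$. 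That composition lemma, together with the identification $DX_U\cong\tau X_U$ in the case $d=1$, is precisely the ingredient your sketch gestures at but never provides; with it, the cone comparison $M_{r-1}\in\{M_1,\tau M_1\}$ and hence statement (c) follow at once.
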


If there is no such number as in $(c)$ we set $m(\cC) = 1$.
An immediate consequence of $(b)$ and $(c)$ is that for $1 \leq i \leq j < r$ we have $(M_i \to) = (M_j \to)$ or $\tau M_i = M_j$. Moreover statement $(a)$ shows the existence of a lot of AR-components such that $\Delta_i \cap \cC$ is a ray and for every such component we have $m(\cC) = i$. With the dual result for $\ERP$ we assign a number $1\leq  w(\cC) < r$ to each regular component $\cC$, giving us  the possibility to distinguish $(r-1)^2$ different types of regular components.\\

\begin{figure*}[!h]
\centering 
\includegraphics[width=0.7\textwidth, height=175px]{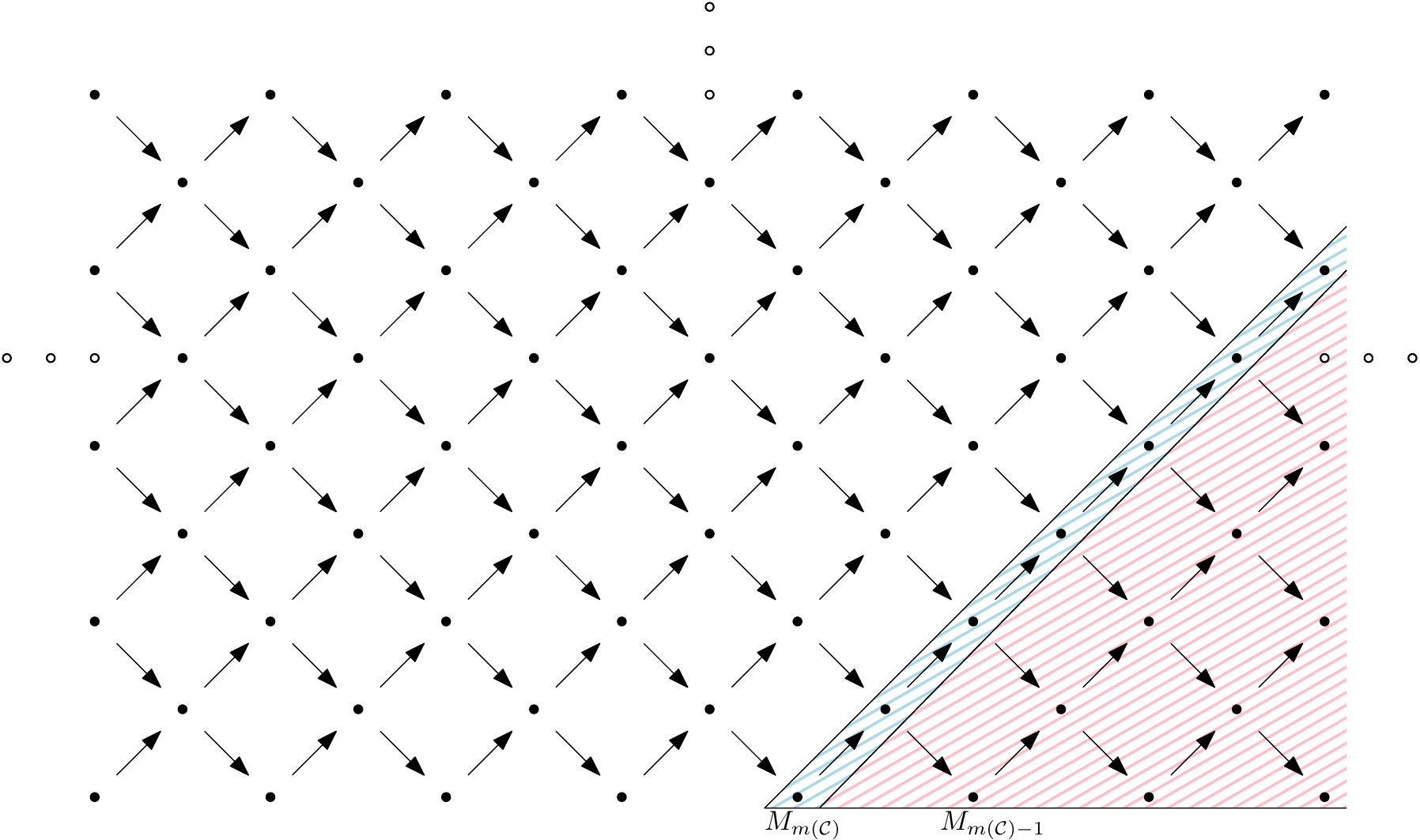}
\caption{Illustration of a regular component with $m(\cC) \neq 1$.}
\label{Fig:ZA_inf_cones}
\end{figure*}

 To prove statement $(a)$ we exploit the fact that every regular module $M$ in $\modd \cK_r$ has self-extensions with $\dim_k \Ext(M,M) \geq 2$ in conjunction with the Process of Simplification. This method was introduced in \cite{Ri4} and  produces extension closed subcategories, whose objects may be filtered by pairwise orthogonal bricks. For elementary abelian groups, $\modd kE_r$ is the only such subcategory. We therefore use the functor $\fF \colon \modd \cK_r \to \modd kE_r$, whose essential image consists of all modules of Loewy length $\leq 2$, to transfer our results to $\modd kE_r$. We denote with $\ESP_{2,d}(E_r)$ the category of modules in $\modd kE_r$ of Loewy length $\leq 2$ with the equal $d$-socle property and arrive at:

\begin{corollaryN}
Let $\Char(k) > 0$, $r \geq 3$ and $1 \leq d < r$. Then $\ESP_{2,d}(E_r) \setminus \ESP_{2,d-1}(E_r)$ has wild representation type, where $\ESP_{2,0}(E_r) := \emptyset$.
\end{corollaryN}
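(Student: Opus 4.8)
The plan is to derive the corollary from part $(a)$ of the Theorem by transporting the wildness of $\Delta_d := \ESP_d \setminus \ESP_{d-1} \subseteq \modd \cK_r$ along the functor $\fF$. Recall that $\fF$ is exact and is an \emph{exact representation embedding}, that is, it sends indecomposables to indecomposables and reflects isomorphism classes, and that its essential image is exactly the full subcategory of $kE_r$-modules of Loewy length $\leq 2$. Since the composite of two exact representation embeddings is again one, and since a subcategory $\cX \subseteq \modd kE_r$ being of wild representation type means precisely that there is an exact representation embedding $\modd \Lambda \to \modd kE_r$ from the module category of some wild algebra $\Lambda$ whose image lies in $\cX$, it suffices to show that $\fF$ restricts to a functor $\Delta_d \to \ESP_{2,d}(E_r)\setminus \ESP_{2,d-1}(E_r)$ and then to invoke the embedding $\modd \Lambda \to \Delta_d$ furnished by Theorem $(a)$.

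The heart of the matter is the compatibility of $\fF$ with the $d$-socle filtration. Writing a $\cK_r$-module as $M = (M_1, M_2, (\gamma_j)_{1\le j\le r})$, the module $\fF M$ has underlying space $M_1 \oplus M_2$, and for $a = (a_j) \in k^r$ the associated radical operator acts on $\fF M$ as $\gamma_a := \sum_j a_j \gamma_j \colon M_1 \to M_2$, vanishing on $M_2$. Hence for $U \in \Gr_{d,r}$ one computes
\[
\Soc_U(\fF M) \;=\; \Bigl(\bigcap_{a \in U} \ker \gamma_a\Bigr) \oplus M_2 .
\]
Constancy of $U \mapsto \Soc_U(\fF M)$ on $\Gr_{d,r}$ is thus literally the same condition as constancy of $U \mapsto \bigcap_{a\in U}\ker\gamma_a$, which is the data controlling membership of $M$ in $\ESP_d$ via the test modules $(X_U)_{U \in \Gr_{d,r}}$. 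Therefore, for every $M$ and every $1 \le e < r$, one has $M \in \ESP_e(\cK_r)$ if and only if $\fF M \in \ESP_{2,e}(E_r)$ (for $e = 1$ this recovers Worch's comparison of $\EKP$ with its Loewy-length-$2$ counterpart). In particular $\fF$ both preserves and reflects each layer $\ESP_e$, so it carries the strict inclusion $\ESP_{d-1} \subsetneq \ESP_d$ to $\ESP_{2,d-1}(E_r) \subsetneq \ESP_{2,d}(E_r)$ on the relevant modules and restricts to $\Delta_d \to \ESP_{2,d}(E_r)\setminus\ESP_{2,d-1}(E_r)$.

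Granting this, the argument concludes formally. By Theorem $(a)$ there is an exact representation embedding $G \colon \modd\Lambda \to \Delta_d$ with $\Lambda$ a wild algebra. The composite $\fF \circ G$ is exact, preserves indecomposability and reflects isomorphisms, and by the previous paragraph its image lies in $\ESP_{2,d}(E_r)\setminus \ESP_{2,d-1}(E_r)$; hence the latter category is of wild representation type.

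I expect the main (if modest) obstacle to be the compatibility statement of the middle paragraph: one must confirm that the equal $d$-socle property in $\modd kE_r$ is defined so that it matches, under $\fF$, the socle computation above, and in particular that $\fF$ \emph{reflects} the lower layer so that the difference $\Delta_d$ is not collapsed. The remaining subtlety is purely formal, namely that $\fF|_{\Delta_d}$ must be seen as an embedding into the small category $\ESP_{2,d}(E_r)\setminus\ESP_{2,d-1}(E_r)$ rather than merely into $\modd kE_r$; this is guaranteed by the compatibility lemma together with the identification of the essential image of $\fF$ with the modules of Loewy length $\le 2$.
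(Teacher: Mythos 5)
Your proposal matches the paper's proof in essence: the paper likewise combines the wildness of $\Delta_d = \ESP_d\setminus\ESP_{d-1}$ (its Theorem in Section 4.1) with the fact that the restriction $\fF_c\colon \ESP_c \to \modd_2 kE_r$ is a faithful exact functor reflecting isomorphisms whose essential image is $\ESP_{2,c}(E_r)$, the latter resting on exactly the socle computation $\Soc_U(\fF M)=\bigl(\bigcap_{a\in U}\ker x^M_a\bigr)= \Soc_U(M)\supseteq M_2$ that you carry out (this is the paper's Proposition 2.4.1 together with the cited properties of $\fF$ from Worch's work). The only cosmetic difference is that you re-derive the compatibility of $\fF$ with the $d$-socle condition inline, whereas the paper isolates it as a separate proposition and cites it.
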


For $r = 2$, we consider the Beilinson algebra $B(3,2)$. The fact that $B(3,2)$ is a concealed algebra of type $Q = 1 \rightarrow 2 \rightrightarrows 3$ allows us to apply the Simplification process in $\modd kQ$. We find a wild subcategory in the category of all modules in $\modd B(3,2)$ with the equal kernels property and conclude:

\begin{corollaryN}
Assume that $\Char(k) = p > 2$, then the full subcategory of modules with the equal kernels property in $\modd kE_2$ and Loewy length $\leq 3$ has wild representation type.
\end{corollaryN}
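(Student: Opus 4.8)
The plan is to manufacture a wild subcategory of equal kernels modules inside the hereditary category $\modd kQ$ by Ringel's Process of Simplification, and then push it forward first to $\modd B(3,2)$ and finally, along $\fF$, into $\modd kE_2$. Since $B(3,2)$ is concealed of type $Q = 1 \rightarrow 2 \rightrightarrows 3$, there is a preprojective tilting module $T \in \modd kQ$ with $\End_{kQ}(T) \cong B(3,2)$. First I would record the induced torsion pair $(\cT,\cF)$ in $\modd kQ$, together with the fact that, $T$ being preprojective, every regular $kQ$-module lies in the torsion class $\cT$, and that $\Hom_{kQ}(T,-)$ restricts to an exact equivalence from $\cT$ onto the torsion-free class $\cY \subseteq \modd B(3,2)$. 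It then suffices to exhibit a wild, extension-closed subcategory of regular $kQ$-modules whose image under $\Hom_{kQ}(T,-)$ consists of modules with the equal kernels property.

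Because the Tits form of $Q$ is indefinite, $kQ$ is wild hereditary and its regular components are of type $\ZZ A_\infty$. Exactly as recorded for $\cK_r$, the indefinite Euler form forces every regular brick $B$ in $\modd kQ$ to satisfy $\dim_k \Ext^1(B,B) \geq 2$. I would select such a $B$ so that its image $\Hom_{kQ}(T,B)$ is an equal kernels module; these are abundant among regular $B(3,2)$-modules, as in each regular component they contain the cone of successors of a quasi-simple module, so one may take $B$ to be the preimage of such a quasi-simple module or one of its successors. Applying Simplification to the brick $B$ with $d := \dim_k \Ext^1(B,B) \geq 2$, the full subcategory $\cE(B)$ of $B$-filtered modules is equivalent to the category of finite-dimensional nilpotent representations of the quiver with one vertex and $d$ loops; for $d \geq 2$ this category is wild. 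Moreover every object of $\cE(B)$ is regular, hence lies in $\cT$.

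The next step is to check that $\Hom_{kQ}(T,-)$ carries $\cE(B)$ into the equal kernels modules. Here I would use that the equal kernels property is closed under extensions: given $0 \to A \to E \to C \to 0$ with $A,C$ satisfying it, the snake lemma applied to the generic structure maps shows that $E$ does too. Since $\Hom_{kQ}(T,-)$ is exact on $\cT$, it sends the $B$-filtration of an object of $\cE(B)$ to a filtration by the equal kernels module $\Hom_{kQ}(T,B)$, so the image again has the equal kernels property. Thus $\Hom_{kQ}(T,\cE(B))$ is a wild subcategory of $\modd B(3,2)$ consisting of equal kernels modules. Finally I would invoke $\fF \colon \modd B(3,2) \to \modd kE_2$, which is exact and faithful, reflects isomorphisms and preserves indecomposability, preserves the equal kernels property, and has essential image the modules of Loewy length $\leq 3$; the hypothesis $\Char(k) = p > 2$ is precisely what makes $\fF$ a representation embedding onto this image (for $p = 2$ every $kE_2$-module already has Loewy length $\leq 3$ and $kE_2$ is tame). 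Composing, $\cE(B)$ embeds into the full subcategory of equal kernels modules of Loewy length $\leq 3$ in $\modd kE_2$, which is therefore wild.

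The hard part will be the middle paragraph: controlling the equal kernels property along the tilting equivalence. Concretely, one must place $B$ in the correct cone so that its image is an equal kernels module, and confirm that passing to $B$-filtered modules does not leave that cone; this rests on the extension-closedness of the equal kernels class together with the exactness of $\Hom_{kQ}(T,-)$ on $\cT$. The self-extension bound $\dim_k \Ext^1(B,B) \geq 2$ that drives the Simplification should be extracted from the Euler form of $kQ$ just as in the treatment of $\cK_r$.
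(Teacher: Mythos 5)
Your overall route is the one the paper takes: tilt from the extended Kronecker quiver $Q = 1 \rightarrow 2 \rightrightarrows 3$ to the concealed algebra $B(3,2)$ via a preprojective tilting module $T$, run Ringel's Simplification at a regular brick of $kQ$ with at least two-dimensional self-extensions, check that the image of the resulting filtered category under $\Hom_{kQ}(T,-)$ lies in $\EKP(3,2)$, and finally apply $\fF$, which by \cite[2.3]{Wor1} (for $n = 3 \leq p$, i.e.\ $p>2$) is a representation embedding into the Loewy length $\leq 3$ part of $\modd kE_2$ with image in the equal kernels modules. Your verification that the image lands in $\EKP(3,2)$ --- extension-closedness of the equal kernels class together with exactness of $\Hom_{kQ}(T,-)$ on the torsion class --- is a mild variant of the paper's argument, which instead pulls the test modules $X^1_\alpha$ back to regular $kQ$-modules $U_\alpha$ and uses the filtration lemma to get $\Hom(U_\alpha,L)=0$ for all $L$ in the filtered category; both work, and both still require translating the chosen brick by $\tau^{-l}$ so that its image actually sits in the equal kernels cone of its component.

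The genuine error is the claim that ``the indefinite Euler form forces every regular brick $B$ in $\modd kQ$ to satisfy $\dim_k \Ext^1(B,B) \geq 2$.'' That argument works for $\cK_r$, $r \geq 3$, precisely because $q_{\cK_r}(a,b) = a^2+b^2-rab$ has no nonzero isotropic roots; but for the extended Kronecker quiver the Tits form $q_Q(x_1,x_2,x_3) = x_1^2+x_2^2+x_3^2-x_1x_2-2x_2x_3$ vanishes on $(0,1,1)$, the isotropic root supported on the tame subquiver $2 \rightrightarrows 3$. A regular brick $B$ with such a dimension vector has $\dim_k\Ext^1(B,B) = \dim_k\End(B) - q_Q(\dimu B) = 1$, and Simplification at it produces only a uniserial (hence not wild) subcategory. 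So you cannot take an arbitrary quasi-simple in the relevant cone; you must first invoke the existence of \emph{some} regular brick $M$ with $\dim_k\Ext^1(M,M)\geq 2$ (the paper cites \cite[5.1]{KerLuk2}) and then pass to $N = \tau^{-l}M$, which is still such a brick, for $l$ large enough that $\Hom_{kQ}(T,N)$ has the equal kernels property. With that correction your argument goes through and coincides in substance with the paper's proof.
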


\noindent In particular, we generalize results by Benson \cite{Be1}, and Bondarenko and Lytvynchuk \cite{Bo1} concerning the wildness of various subcategories of $kE_r$-modules. We also construct examples of regular components $\cC$ such that each module in $\cC$ has constant $d$-socle rank, but no module in $\cC$ is $\GL_r$-stable in the sense of \cite{CFP1}. 

\section{Preliminaries}

Throughout this article let $k$ be an algebraically closed field and $r \geq 2$. If not stated otherwise, $k$ is of arbitrary characteristic. $Q = (Q_0,Q_1)$ always denotes an acyclic, finite and connected quiver. For an arrow $\alpha \colon x \to y \in Q_1$ we define $s(\alpha) = x$ and $t(\alpha) = y$. We say that $\alpha$ starts in $s(\alpha)$ and ends in $t(\alpha)$. 
A finite dimensional representation $M = ((M_x)_{x \in Q_0},(M(\alpha))_{\alpha \in Q_1})$  over $Q$ consists of vector spaces $M_x$ and linear maps $M(\alpha) \colon M_{s(\alpha)} \to M_{t(\alpha)}$ such that $\dim_k M := \sum_{x \in Q_0} \dim_k M_x$ is finite. A morphism $f \colon M \to N$ between representations is a collection of linear maps $(f_z)_{z \in Q_0}$ such for each arrow $\alpha \colon x \to y$ there is a commutative diagram 

\[	   \begin{tikzpicture}[descr/.style={fill=white,inner sep=1.5pt}]
		
				\matrix [
            matrix of math nodes,
            row sep=3em,
            column sep=3.0em,
            text height=2.0ex, text depth=0.25ex
        ] (s)
{
& |[name=A_1]| M_x  &|[name=B_1]| M_y  \\
& |[name=A_2]| N_x &|[name=B_2]| N_y.  \\
};
\draw[->] (A_1) edge node[auto] {$M(\alpha)$} (B_1)         			 	  (A_2) edge node[auto] {$N(\alpha)$} (B_2)   
		  (A_1) edge node[auto] {$f_x$} (A_2)	
		  (B_1) edge node[auto] {$f_y$} (B_2)					      
				  ;
\end{tikzpicture}\]

\noindent The category of finite dimensional representations over $Q$ is denoted by $\rep(Q)$ and $kQ$ is the path algebra of $Q$ with idempotents $e_x, x \in Q_0$. $kQ$ is a finite dimensional, associative, basic and connected $k$-algebra. Let $\modd kQ$ be the class of finite-dimensional $kQ$ left modules. Given $M \in \modd kQ$ we set $M_x := e_x M$. The categories $\modd kQ$ and $\rep(Q)$ are equivalent $($see for example \cite[III 1.6]{Assem1}$)$. We will therefore switch freely between representations of $Q$ and modules of $kQ$, if one of the approaches seems more convenient for us. We assume that the reader is familiar with Auslander-Reiten theory and basic results on wild hereditary algebras. For a well written survey on the subjects we refer to \cite{Assem1}, \cite{Ker2} and \cite{Ker3}.

\begin{Definition}
 Recall the definition of the dimension function
\[ \dimu \colon \modd kQ \to \ZZ_{0}^{Q_0}, M \mapsto (\dim_k M_x)_{x \in Q_0}.\] 
If $0 \to A \to B \to C \to 0$ is an exact sequence, then $\dimu A + \dimu C = \dimu B$. The quiver $Q$ defines a $($non-symmetric) bilinear form 
\[ \langle -,- \rangle \colon \ZZ^{Q_0} \times \ZZ^{Q_0} \to \ZZ, \]
given by $((x_i),(y_j)) \mapsto \sum_{i \in Q_0} x_i y_i - \sum_{\alpha \in Q_1} x_{s(\alpha)} y_{t(\alpha)}$. For the case that $x,y$ are given by dimension vectors of modules there is another description of $\langle -,- \rangle$ known as the Euler-Ringel form \cite{Ri4}:

\[ \langle \dimu M, \dimu N \rangle = \dim_k \Hom(M,N) - \dim_k \Ext(M,N).\]

\noindent We denote with $q = q_{Q} \colon \ZZ^{Q_0} \to \ZZ$ the corresponding quadratic form. A vector $d \in \ZZ^{Q_0}$ is called a real root if $q(d) = 1$ and an imaginary root if $q(d) \leq 0$.
\end{Definition}

\noindent Denote by $\Gamma_r$ the $r$-Kronecker quiver, which is given by two vertices $1,2$ and arrows $\gamma_1,\ldots,\gamma_r \colon 1 \to 2$.

\begin{figure}[!h]
\centering 
\tikzstyle{every node}=[]
\tikzstyle{edge from child}=[]

\begin{tikzpicture}[->,>=stealth',auto,node distance=3cm,
  thick,main node/.style={circle,draw,font=\sffamily\Large\bfseries}]

  \node (1) {$\bullet$};
  \node (2) [right of=1] {$\bullet$};
   \node[color=black] at (1.4,0.7) {$\gamma_{1}$};
   \node[color=black] at (1.4,-0.7) {$\gamma_{r}$};
   \node[color=black] at (1.4,-0.2) {$\vdots$};
   \node[color=black] at (1.4,0.4) {$\vdots$};
   
   \node[color=black] at (0,-0.3) {$1$};
   \node[color=black] at (3,-0.3) {$2$};

  \path[every node/.style={font=\sffamily\small}]
    
    (1) edge[bend right] node [left] {} (2)
    (1) edge[bend left] node [left] {} (2)
    (1) edge[] node [left] {} (2)
   ;
\end{tikzpicture}
\caption{The Kronecker quiver $\Gamma_r$.}
\label{Fig:Kronecker}

\end{figure}
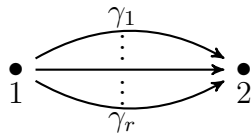

We set $\cK_r := k\Gamma_r$ and $P_1 := \cK_r e_2$, $P_2 := \cK_r e_1$. $P_1$ and $P_2$ are the indecomposable projective modules of $\modd \cK_r$, $\dim_k \Hom(P_1,P_2) = r$ and $\dim_k \Hom(P_2,P_1) = 0$. As Figure $\ref{Fig:Kronecker}$ suggests, we write $\dimu M = (\dim_k M_1,\dim_k M_2)$. For example $\dimu P_1 = (0,1)$ and $\dim_k P_2 = (1,r)$.
The Coxeter matrix $\Phi$ and its inverse $\Phi^{-1}$ $($see for example \cite[3.1]{Ker3}$)$ are
$\Phi:=\begin{pmatrix}
 r^2-1 & -r \\ 
 r & -1 \\
\end{pmatrix}$,
$\Phi^{-1}=\begin{pmatrix}
 -1 & r \\ 
 -r & r^2-1 \\
\end{pmatrix}$. 
For $M$ indecomposable it is $\dimu \tau M = \Phi(\dimu M)$ if $M$ is not projective and 
$\dimu \tau^{-1} M = \Phi^{-1}(\dimu M)$ if $M$ is not injective. The quadratic form $q$ is given by $q(x,y) = x^2 + y^2 - r x y$.

\begin{figure}[!h]
\centering 
\includegraphics[width=0.7\textwidth, height=50px]{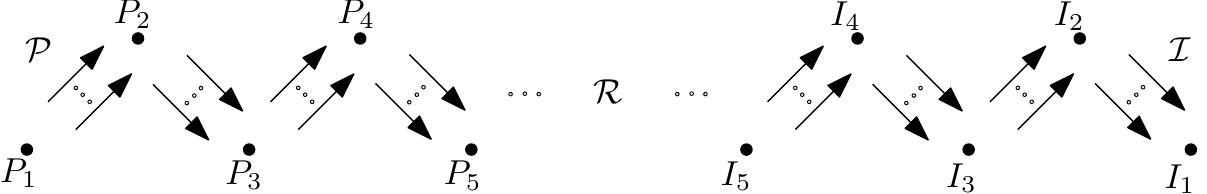}
\caption{AR quiver of $\cK_r$}
\label{Fig:AuslanderReitenquiver}
\end{figure}

Figure $\ref{Fig:AuslanderReitenquiver}$  shows the notation we use for the components $\cP,\cI$ in the Auslander-Reiten quiver of $\cK_r$ which contain the indecomposable projective modules $P_1,P_2$ and indecomposable injective modules $I_1,I_2$. The set of all other components is denoted by $\cR$.

Ringel has proven \cite[2.3]{Ri3} that every component in $\cR$ is of type $\ZZ A_\infty$ if $r \geq 3$ or a homogenous tube $\ZZ A_\infty/\langle \tau \rangle$ if $r = 2$. A module in such a component is called regular. An irreducible morphism in a regular component is injective if the corresponding arrow is uprising $($see Figure $\ref{Fig:ZA_inf_cones}$ for $r \geq 3)$ and surjective otherwise. A regular module $M$ is called quasi-simple, if the AR-sequence terminating in M has an indecomposable middle term. These modules are the modules in the bottom layer of the regular components. If $M$ is quasi-simple in a regular component $\cC$, there is an infinite chain $($ray$)$ of irreducible monomorphisms $($resp. epimorphisms$)$
\[ M = M[1] \to M[2] \to M[3] \to \cdots \to M[l] \to \cdots \] 
\[ \cdots (l)M \to \cdots \to (3)M \to (2)M \to (1)M = M, \] 
and for each regular module $X$ there are unique quasi-simple modules $N,M$ and $l \in \NN$ with $(l)M = X = N[l]$. $l$ is called the quasi-length of $X$.\\  
The indecomposable modules in $\cP$ are called preprojective modules and the modules in $\cI$ are called preinjective modules. Moreover we call an arbitrary module preprojective $($resp. preinjective, regular$)$ if all its indecomposable direct summands are preprojective $($res. preinjective, regular$)$. It is $P$ in $\cP$ $(I$ in $\cI)$ if and only if there is $l \in \NN_0$ with $\tau^l P = P_i$ $ (\tau^{-l} I = I_i)$ for $i \in \{1,2\}$. Recall that there are no homomorphisms from right to left \cite[VIII, 2.13]{Assem1}. To emphasize this result later on, we just write
 \[ \Hom(\cI,\cP) = 0 = \Hom(\cI,\cR) = 0 = \Hom(\cR,\cP).\]

Using the equivalence of categories $\modd \cK_r \cong \rep(Q)$ we introduce the duality $D \colon \modd \cK_r \to \modd \cK_r$ by setting $(DM)_{x} := (M_{\psi(x)})^\ast$ and $(DM)(\gamma_i) := (M(\gamma_i))^\ast$ for each $M \in \rep(Q)$, where $\psi \colon \{1,2\} \to \{1,2\}$  is the permutation of order $2$. Note that $D(P_i) = I_i$ for all $i \in \NN$. We state a simplified version of Kacs Theorem \cite[1.10]{Kac} for the Kronecker algebra in combination with results on the quadratic form proven in \cite[2.3]{Ri4}:

\begin{TheoremS} Let $r \geq 2$ and $d \in \NN^2_0$.
\begin{enumerate}[topsep=0em, itemsep= -0em, parsep = 0 em, label=$(\alph*)$]
\item If $d = \dimu M$ for some indecomposable module $M$, then $q(d)\leq 1$.
\item If  $q(d) = 1$, then there exists a unique indecomposable module $X$ with $\dimu X = d$. In this case $X$ is preprojective or preinjective and $X$ is preprojective if and only if $\dim_k X_1 < \dim_k X_2$.
\item If $q(d) \leq 0$, then there exist infinitely many indecomposable modules $Y$ with $\dimu Y = d$ and each $Y$ is regular.
\end{enumerate}
\end{TheoremS}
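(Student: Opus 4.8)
The plan is to combine the counting half of Kac's Theorem \cite[1.10]{Kac} with an explicit description of the nonnegative integral solutions of $q(x,y) = x^2 + y^2 - rxy = 1$, read off from the trichotomy $\modd \cK_r = \cP \cup \cI \cup \cR$ from \cite[2.3]{Ri3} recalled above. The one computational input is that $q$ is invariant under the Coxeter transformation: writing $q(v) = v^{T} S v$ with $S$ the symmetric Gram matrix, a direct check of $\Phi^{T} S \Phi = S$ gives $q(\Phi v) = q(v) = q(\Phi^{-1} v)$ for all $v \in \ZZ^2$. Since $\dimu \tau^{\pm 1} X = \Phi^{\pm 1} \dimu X$ for non-projective (resp. non-injective) indecomposables, and $q(\dimu P_1) = q(0,1) = 1 = q(1,r) = q(\dimu P_2)$, induction on the number of applications of $\tau^{-1}$ yields $q(\dimu X) = 1$ for every preprojective $X$; applying the duality $D$, under which $q$ is invariant because $q$ is symmetric in its two arguments, gives the same for every preinjective.

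Next I would pin down the signs. The corners $(0,1)$ and $(1,r)$ satisfy $X_1 < X_2$, and a short estimate using $\Phi^{-1}(x,y) = (ry - x, (r^2-1)y - rx)$ shows that $0 \le x < y$ together with $q(x,y) = 1$ forces the image to again have strictly smaller first than second coordinate and to stay nonnegative; hence every preprojective satisfies $X_1 < X_2$, and dually every preinjective satisfies $X_1 > X_2$. There is no positive root on the diagonal, because $q(x,x) = (2-r)x^2 \le 0$, with equality only when $r = 2$, where the value is $0 \ne 1$. Thus among nonzero $d \in \NN_0^2$ with $q(d) = 1$ the two sign conditions are mutually exclusive and exhaustive.

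The combinatorial heart is the converse: every nonzero $d = (x,y) \in \NN_0^2$ with $q(d) = 1$ is the dimension vector of a preprojective or a preinjective module. I would prove this by infinite descent on the total dimension $x + y$. Given such a $d$ with $x < y$ and $d \notin \{(0,1),(1,r)\}$, the claim is that $\Phi d$ is again a nonnegative solution of $q = 1$ with strictly smaller total dimension and first coordinate still below the second; iterating, the descent terminates, and the only vectors from which $\Phi$ leaves the nonnegative cone are the two corners $(0,1)$ and $(1,r)$. This identifies $d$ as $\Phi^{-l} \dimu P_i = \dimu \tau^{-l} P_i$, i.e.\ as a preprojective dimension vector; the case $x > y$ is dual and gives a preinjective. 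Checking that $\Phi$ preserves nonnegativity and strictly decreases $x+y$ on exactly this range -- in particular excluding spurious intermediate solutions with $2 \le x < r$ -- is the delicate point, and is where I expect the main obstacle to lie.

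With these facts in hand the three statements follow immediately. Statement $(a)$ is the assertion in Kac's Theorem that the dimension vector of an indecomposable is a positive root, whence $q(d) \le 1$. For $(b)$, $q(d) = 1$ makes $d$ a real root, so Kac supplies a unique indecomposable $X$ with $\dimu X = d$; the descent shows $d$ is realised by a preprojective or preinjective module, which must therefore coincide with $X$, and the sign analysis characterises the preprojective case by $X_1 < X_2$. For $(c)$, $q(d) \le 0$ makes $d$ an imaginary root, so Kac -- using that $k$ is algebraically closed and hence infinite -- produces infinitely many indecomposables of dimension vector $d$; none is preprojective or preinjective, since those have $q = 1$, so by the trichotomy each of them is regular.
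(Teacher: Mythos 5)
The paper does not actually prove this statement: it is introduced as ``a simplified version of Kacs Theorem [Kac, 1.10] \ldots in combination with results on the quadratic form proven in [Ri4, 2.3]'', i.e.\ both the counting assertions and the identification of the real roots with the preprojective/preinjective dimension vectors are imported by citation. So there is no internal argument to compare yours against; what you have written is a reconstruction of the standard proof, and its skeleton is sound. Your computations check out: $\Phi^{T}S\Phi=S$ does hold, $q(0,1)=q(1,r)=1$, $D$ swaps the two coordinates so $q$ is $D$-invariant, and the estimate showing $\Phi^{-1}$ preserves the region $0\le x<y$ is correct. Quoting Kac for ``unique indecomposable at a real root'' and ``infinitely many at an imaginary root'' is exactly what the paper does as well.

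The one genuine gap is the step you yourself flag: the descent argument showing that every nonzero $d\in\NN_0^2$ with $q(d)=1$ and $d_1<d_2$ lies in $\{\Phi^{-l}(0,1),\Phi^{-l}(1,r)\mid l\ge 0\}$. As you suspect, $\Phi$ applied to a hypothetical root $(x,y)$ with $2\le x<r$ and $x<y$ would leave the nonnegative cone (the first coordinate of $\Phi(x,y)$ is $ry_{-}-x$, which is nonnegative precisely when $x\ge r$), so the descent via the Coxeter transformation alone does not close; you must separately exclude such roots, and ``I expect the main obstacle to lie here'' is not a proof. The clean repair is to descend by the two simple reflections $s_1(x,y)=(ry-x,y)$ and $s_2(x,y)=(x,rx-y)$ rather than by $\Phi=(s_2s_1)^{-1}$: if $q(x,y)=1$ with $0\le x<y$ and $(x,y)\neq(0,1)$, then $y$ is the larger root of $t^2-rxt+(x^2-1)=0$, so $rx-y$ is the smaller root, hence an integer in $[0,x)$; thus $s_2$ keeps the vector nonnegative, strictly decreases $x+y$, and flips the inequality between the coordinates, after which one applies $s_1$, and the alternation terminates at $(0,1)$ or $(1,0)$. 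Since $s_2s_1=\Phi^{-1}$, pairing consecutive reflections recovers exactly the two $\Phi^{-1}$-orbits of $(0,1)$ and $(1,r)$ and, as a by-product, rules out the spurious solutions with $2\le x<r$. With that step supplied, your argument is complete and is the standard one underlying the cited references.
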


\noindent Since there is no pair $(a,b) \in \NN^2_0 \setminus \{(0,0)\}$ satisfying $a^2 + b^2 - rab = q(a,b) = 0$ for $r \geq 3$, we conclude together with \cite[VIII 2.7]{Assem1}:
\begin{corollaryS}\label{Extgeq2}
Let $M$ be an indecomposable $\cK_r$-module. Then $\Ext(M,M) = 0$ if and only if $M$ is preprojective or preinjective. If $r \geq 3$ and $M$ is regular, then $\dim_k \Ext(M,M) \geq 2$.
\end{corollaryS}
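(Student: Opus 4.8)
The plan is to read the entire statement off the Euler--Ringel form. Since $M$ is indecomposable, $\End(M)$ is local, so $\dim_k \End(M) \geq 1$, and the identity
\[ q(\dimu M) = \langle \dimu M, \dimu M\rangle = \dim_k \End(M) - \dim_k \Ext(M,M) \]
can be rewritten as $\dim_k \Ext(M,M) = \dim_k \End(M) - q(\dimu M)$. Thus everything is governed by the single integer $q(\dimu M)$, which the Kac-type theorem above pins down: part $(a)$ gives $q(\dimu M) \leq 1$ for any indecomposable $M$, part $(b)$ identifies the case $q(\dimu M) = 1$ with the preprojective/preinjective modules, and part $(c)$ covers $q(\dimu M) \leq 0$ with the regular ones.

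For the forward implication of the equivalence, I would assume $\Ext(M,M) = 0$. Then $q(\dimu M) = \dim_k \End(M) \geq 1$, and together with $q(\dimu M) \leq 1$ this forces $q(\dimu M) = 1$ (and incidentally $\End(M) = k$). Part $(b)$ then says the unique indecomposable of this dimension vector is preprojective or preinjective, hence so is $M$. The converse implication is the one genuine input from the literature, and the step I expect to be the real content: I would invoke \cite[VIII 2.7]{Assem1} in the form that an indecomposable preprojective or preinjective module over a hereditary algebra has $\End(M) = k$ and $\Ext(M,M) = 0$. Alternatively one could try to derive this internally from the Auslander--Reiten formula $\dim_k \Ext(M,M) = \dim_k \Hom(M, \tau M)$ combined with the absence of ``backward'' morphisms in $\cP$ and $\cI$, but delegating it to the cited result keeps the argument clean.

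For the final assertion, let $r \geq 3$ and let $M$ be regular. Then $M$ is neither preprojective nor preinjective, so by the equivalence just proved (equivalently, directly by parts $(a)$ and $(b)$) we have $q(\dimu M) \neq 1$, and since $M$ is indecomposable this sharpens to $q(\dimu M) \leq 0$. Here the observation recorded immediately before the corollary is decisive: for $r \geq 3$ the form $q(x,y) = x^2 + y^2 - rxy$ has no nontrivial zero in $\NN_0^2$, so $q(\dimu M) \neq 0$ and therefore $q(\dimu M) \leq -1$. Feeding this back gives
\[ \dim_k \Ext(M,M) = \dim_k \End(M) - q(\dimu M) \geq 1 - (-1) = 2, \]
as claimed; note that only $\dim_k \End(M) \geq 1$ is needed here, not the brick property. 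The main obstacle is thus concentrated entirely in the converse of the equivalence, while the numerical bound $\geq 2$ follows formally once the anisotropy of $q$ over $\NN_0^2 \setminus \{(0,0)\}$ for $r \geq 3$ is in hand.
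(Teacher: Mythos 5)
Your argument is correct and is essentially the paper's own: the paper derives this corollary in one line from the Kac-type theorem, the observation that $q(a,b)=a^2+b^2-rab$ has no nontrivial zero in $\NN_0^2$ for $r\geq 3$, and the citation \cite[VIII 2.7]{Assem1} for the preprojective/preinjective direction, and your use of the Euler--Ringel identity $\dim_k\Ext(M,M)=\dim_k\End(M)-q(\dimu M)$ is exactly the intended bookkeeping. No gaps.
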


Let $\modd_{pf} \cK_r$ be the subcategory of all modules without non-zero projective direct summands and $\modd_{if} \cK_r$ the subcategory of all modules without non-zero injective summands. Since $\cK_r$ is a hereditary algebra, the Auslander-Reiten translation $\tau \colon \modd \cK_r \to \modd \cK_r$ induces an equivalence from $\modd_{pf}$ to $\modd_{if}$. In particular if $M$ and $N$ are indecomposable with $M,N$ not projective we get $\Hom(M,N) \cong \Hom(\tau M,\tau N)$. The Auslander-Reiten formula \cite[2.13]{Assem1} simplifies to 

\begin{TheoremS}\cite[2.3]{Ker3}
   For $X,Y$ in $\modd \cK_r$ there a functorial isomorphisms 
    \[ \Ext(X,Y) \cong D\Hom(Y,\tau X) \cong D\Hom(\tau^{-1} Y,X).\]
\end{TheoremS}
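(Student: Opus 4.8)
The plan is to start from the general Auslander--Reiten formula \cite[2.13]{Assem1}, which reads $\Ext(X,Y)\cong D\,\overline{\Hom}(Y,\tau X)\cong D\,\underline{\Hom}(\tau^{-1}Y,X)$, where $\overline{\Hom}(-,-)$ denotes homomorphisms modulo those factoring through an injective module and $\underline{\Hom}(-,-)$ modulo those factoring through a projective module. So it suffices to prove that for $\cK_r$ these two quotients already coincide with the full Hom-spaces; that is, no nonzero map $\tau^{-1}Y\to X$ factors through a projective, and no nonzero map $Y\to\tau X$ factors through an injective. The whole point is that heredity of $\cK_r$ forces both of these to hold.

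First I would treat the second isomorphism. Since $\cK_r$ is hereditary, every submodule of a projective module is again projective. Suppose a homomorphism $g\colon\tau^{-1}Y\to X$ factors as $\tau^{-1}Y\xrightarrow{a}P\xrightarrow{b}X$ with $P$ projective. Its image $Z:=\im(a)$ is a submodule of the projective $P$, hence itself projective, and $a$ induces an epimorphism $\tau^{-1}Y\twoheadrightarrow Z$. As $Z$ is projective this epimorphism splits, so $Z$ is a direct summand of $\tau^{-1}Y$. But $\tau^{-1}$ takes values in $\modd_{pf}\cK_r$, so $\tau^{-1}Y$ has no nonzero projective summand; therefore $Z=0$ and $g=0$. (Injective summands of $Y$ are split off beforehand, being annihilated by $\tau^{-1}$ and contributing nothing to either side, since $\Ext(X,I)=0$ for $I$ injective.) This gives $\underline{\Hom}(\tau^{-1}Y,X)=\Hom(\tau^{-1}Y,X)$, hence $\Ext(X,Y)\cong D\Hom(\tau^{-1}Y,X)$.

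The first isomorphism I would obtain dually, applying the duality $D\colon\modd\cK_r\to\modd\cK_r$: quotients of injectives are injective, $\tau X$ has no nonzero injective summand (it lies in the image of $\tau$), and an injective quotient splits off, so by the mirror of the previous argument no nonzero map $Y\to\tau X$ factors through an injective and $\overline{\Hom}(Y,\tau X)=\Hom(Y,\tau X)$. Finally, the middle isomorphism $D\Hom(Y,\tau X)\cong D\Hom(\tau^{-1}Y,X)$ is immediate from the equivalence $\tau\colon\modd_{pf}\cK_r\to\modd_{if}\cK_r$ recorded above, which yields $\Hom(\tau^{-1}Y,X)\cong\Hom(Y,\tau X)$ once the relevant modules are free of projective/injective summands. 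The one delicate point is exactly this bookkeeping of projective and injective summands of $X$ and $Y$: one must verify that removing them is harmless on every side of each isomorphism, using that $\Ext(P,-)=0=\Ext(-,I)$ for $P$ projective and $I$ injective and that $\tau,\tau^{-1}$ annihilate precisely these summands. I expect this reduction, rather than the heredity argument itself, to be the part demanding the most care.
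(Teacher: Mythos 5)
Your proposal is correct and is essentially the argument the paper intends: the paper simply invokes the general Auslander--Reiten formula from \cite[IV.2.13]{Assem1} and notes that it ``simplifies'' for the hereditary algebra $\cK_r$, which is exactly your observation that the stable Hom-spaces coincide with the full ones because $\tau^{-1}Y$ has no projective summands (submodules of projectives being projective) and $\tau X$ has no injective summands (quotients of injectives being injective). Your worry about bookkeeping is in fact unnecessary for the two outer isomorphisms, since $\tau^{-1}Y\in\modd_{pf}\cK_r$ and $\tau X\in\modd_{if}\cK_r$ hold automatically for arbitrary $X,Y$, and the middle isomorphism then follows by transitivity.
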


\section{Modules of constant radical and socle rank}

\subsection{Elementary modules of small dimension}
Let $r \geq 3$. The homological characterization in \cite{Wor1} uses a family of modules    of projective dimension $1$. In case $B(2,r) = \cK_r$ this condition is satisfied for every non-projective module and the familiy is denoted by $(X_\alpha)_{\alpha \in k^r\setminus \{0\}}$. We will see later on that each non-zero proper submodule of $X_\alpha$ is isomorphic to a finite number of copies of $P_1$ and $X_\alpha$ itself is regular. In particular we do not find a short exact sequence $0 \to A \to X_\alpha \to B \to 0$ such that $A$ and $B$ are regular and non-zero. In the language of wild hereditary algebras, we therefore deal with elementary modules:
\begin{Definition} \cite[1]{KerLuk1}
A non-zero regular module $E$ is called elementary, if there is no short exact sequence $0 \to A \to E \to B \to 0$ with $A$ and $B$ regular non-zero.
\end{Definition}

Elementary modules are analogues of quasi-simple modules in the tame hereditary case $(r = 2)$. If $X$ is a regular module, then $X$ has a filtration $0 = X_0 \subset X_1 \subset \cdots \subset X_r = X$ such that $X_i/X_{i-1}$ is elementary for all $1 \leq i \leq r$ and the elementary modules are the smallest class with that property. For basic results on elementary modules, used in this section, we refer to \cite{KerLuk1}.\\

\vspace{0.25cm}

\noindent We are grateful to Otto Kerner for pointing out the following helpful lemma.

\begin{Lemma}\label{CompElementary}
Let $E$ be an elementary module and $X,Y$ regular with non-zero morphisms $f \colon X \to E$ and $g \colon E \to Y$. Then $g \circ f \neq 0$. In particular $\End(E) = k$. 
\end{Lemma}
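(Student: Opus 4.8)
The plan is to argue by contradiction: assume $g\circ f = 0$ and produce a violation of the elementary property of $E$. Since $g\circ f=0$ is equivalent to $\im f \subseteq \ker g$, I would work with the submodule $U:=\im f$ and $V:=\ker g$, so that $U\subseteq V\subsetneq E$ and $E/V\cong \im g$. First I would record that both $U$ and $\im g$ are regular and non-zero. Indeed, $U$ is a submodule of the regular module $E$, so by $\Hom(\cI,\cR)=0$ it has no preinjective direct summand, and it is a factor of the regular module $X$, so by $\Hom(\cR,\cP)=0$ it has no preprojective summand; hence $U$ is regular, and $U\neq 0$ as $f\neq 0$. The same reasoning applied to $\im g\cong E/V$, which embeds into the regular module $Y$, shows $\im g$ is regular and non-zero.

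The heart of the proof is the claim that $V=\ker g$ is preprojective. Using the decomposition of $V$ into indecomposables lying in $\cP$, $\cR$ and $\cI$, write $V=V_{\cP}\oplus V_{\cR}\oplus V_{\cI}$; since $V\subseteq E$ is a submodule of a regular module, $V_{\cI}=0$. Suppose, for contradiction, that the regular part $V_{\cR}$ is non-zero. Then $V_{\cR}$ is a non-zero proper regular submodule of $E$, so the elementary property applied to $0\to V_{\cR}\to E\to E/V_{\cR}\to 0$ forces $E/V_{\cR}$ to be non-regular. As $E/V_{\cR}$ is a factor of a regular module it has no preprojective summand, so it has a non-zero preinjective summand $N$. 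Now the canonical surjection $q\colon E/V_{\cR}\twoheadrightarrow E/V$ has regular target, so $q|_{N}=0$ by $\Hom(\cI,\cR)=0$; hence $N\subseteq \ker q = V/V_{\cR}\cong V_{\cP}$. This exhibits a non-zero map from the preinjective module $N$ into the preprojective module $V_{\cP}$, contradicting $\Hom(\cI,\cP)=0$. Therefore $V_{\cR}=0$ and $V$ is preprojective.

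With this in hand the lemma follows quickly. Since $U=\im f$ is regular and $U\subseteq V$ with $V$ preprojective, the inclusion is a map in $\Hom(\cR,\cP)=0$, so $U=0$, contradicting $f\neq 0$; hence $g\circ f\neq 0$. For the final assertion, $E$ is indecomposable (a non-trivial direct sum decomposition would split $E$ into two non-zero regular summands, violating elementariness), so $\End(E)$ is local and finite dimensional over the algebraically closed field $k$. If $\varphi\in\End(E)$ were non-zero and not an isomorphism it would be nilpotent; but taking $X=Y=E$ and $f=g=\varphi$ in the first part gives $\varphi^{2}\neq 0$, and inductively $\varphi^{n}\neq 0$ for all $n$, a contradiction. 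Thus every non-zero endomorphism is invertible and $\End(E)=k$.

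I expect the main obstacle to be the key step identifying $\ker g$ as preprojective. The naive move—applying the elementary property to $0\to U\to E\to E/U\to 0$—only yields that $E/U$ is non-regular, which is not by itself contradictory. The decisive idea is instead to control $\ker g$ through its regular part $V_{\cR}$ and to play the preinjective summand of $E/V_{\cR}$ against the preprojective module $V_{\cP}$, using all three vanishing statements $\Hom(\cI,\cR)=\Hom(\cR,\cP)=\Hom(\cI,\cP)=0$ simultaneously.
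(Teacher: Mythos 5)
Your proof is correct, and it reaches the conclusion by the same overall mechanism as the paper — use the elementary property to force the "obstruction" module between $\im f$ and $\ker g$ to live in $\cP$ or $\cI$, then invoke $\Hom(\cI,\cP)=\Hom(\cI,\cR)=\Hom(\cR,\cP)=0$ — but you execute it differently in two respects. First, the paper works with $\coker f$: it observes that $\im f$ is regular non-zero and then simply cites Kerner--Lukas \cite[1.3]{KerLuk1} for the fact that the quotient of an elementary module by a non-zero regular submodule is preinjective, so that $g$ cannot factor through $\coker f$. You instead work with the dual object $\ker g$ and, rather than citing anything, prove from scratch that it is preprojective: splitting $\ker g = V_{\cP}\oplus V_{\cR}$, assuming $V_{\cR}\neq 0$, extracting a preinjective summand $N$ of $E/V_{\cR}$ from the elementary property, and trapping $N$ inside $V_{\cP}$ via the surjection $E/V_{\cR}\twoheadrightarrow E/\ker g$. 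This is exactly the content of (the dual of) \cite[1.3]{KerLuk1}, so your argument is a self-contained replacement for the citation; the cost is length, the gain is that the lemma no longer depends on an external structural result. Second, you spell out the deduction of $\End(E)=k$ (indecomposability of $E$, hence $\End(E)$ local, hence non-units nilpotent, contradicting $\varphi^{n}\neq 0$ from the first part), which the paper leaves as an implicit "in particular"; your argument there is correct. I found no gaps: the verifications that $\im f$ and $\im g$ are regular non-zero, that $V_{\cI}=0$, that $V_{\cR}$ is proper (else $g=0$), and that $E/V_{\cR}$ has no preprojective summand are all sound.
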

\begin{proof}
Since $f$ is non-zero and $\Hom(\cR,\cP) = 0 = \Hom(\cI,\cR)$,  $\im f$ is a regular non-zero submodule of  $E$. Consequently, since $E$
is elementary, $\coker f$ is preinjective by \cite[1.3]{KerLuk1}, hence $g$ can not factor through coker $f$.
\end{proof}

We use the theory on elementary modules to generalize \cite[2.7]{Wor1} in the hereditary case. 
\begin{proposition}\label{Cone1}
Let $\cE$ be a family of elementary modules of bounded dimension and put  
$ \cT(\cE) := \{ M \in \modd \cK_r \mid  \forall E \in \cE: \ \Ext(E,M) = 0 \}$. Then the following statements hold:
\begin{enumerate}[topsep=0em, itemsep= -0em, parsep = 0 em, label=$(\arabic*)$]
\item $\cT(\cC)$ is closed under extensions, images and $\tau$.
\item $\cT(\cC)$ contains all preinjective modules.
\item For each regular component $\cC$, the set $\cT(\cE) \cap \cC$ forms a non-empty cone in $\cC$, which consists of the predecessors of a uniquely determined quasi-simple module in $\cC$.
\end{enumerate}

\end{proposition}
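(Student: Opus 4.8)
The plan is to translate the vanishing condition into \textbf{Hom}-language via the Auslander--Reiten formula and then cash in the fact that the test modules are \emph{elementary}. Since $\Ext(E,M)\cong D\Hom(M,\tau E)$, we get the running reformulation
\[ \cT(\cE)=\{M\in\modd\cK_r \mid \Hom(M,\tau E)=0 \text{ for all }E\in\cE\}. \]
Several of the claims are then immediate. For closure under extensions, apply $\Hom(E,-)$ to $0\to M'\to M\to M''\to 0$; as $\cK_r$ is hereditary the sequence $\Ext(E,M')\to\Ext(E,M)\to\Ext(E,M'')$ has vanishing ends, so the middle vanishes. For images, note every image is a quotient and that right-exactness of $\Ext(E,-)$ makes $\cT(\cE)$ closed under quotients. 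For $(2)$, if $M$ is preinjective then $\tau E\in\cR$ and $\Hom(\cI,\cR)=0$ forces $\Hom(M,\tau E)=0$, so $M\in\cT(\cE)$.

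The delicate part of $(1)$ is closure under $\tau$, and this is exactly where I would use elementarity. For $M$ regular it suffices to show $\Ext(E,M)=0\Rightarrow\Hom(M,E)=0$, because then $\Ext(E,\tau M)\cong D\Hom(\tau M,\tau E)\cong D\Hom(M,E)=0$. I would prove the contrapositive: given $0\neq f\colon M\to E$, the image $U:=\im f$ is a non-zero regular submodule of $E$ (it is neither preprojective nor preinjective, by $\Hom(\cR,\cP)=0=\Hom(\cI,\cR)$, just as in Lemma \ref{CompElementary}). Since $E$ is elementary, \cite[1.3]{KerLuk1} gives that $C:=\coker f$ is preinjective, so $\Ext(E,C)\cong D\Hom(C,\tau E)=0$. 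Feeding $0\to U\to E\to C\to 0$ into $\Hom(E,-)$ yields a surjection $\Ext(E,U)\twoheadrightarrow\Ext(E,E)$, and $\Ext(E,E)\neq 0$ by Corollary \ref{Extgeq2}; hence $\Ext(E,U)\neq 0$. Finally $M\twoheadrightarrow U$ and right-exactness of $\Ext(E,-)$ give $\Ext(E,M)\twoheadrightarrow\Ext(E,U)\neq 0$. Preinjective $M$ are covered by $(2)$, their $\tau$-translates again being preinjective.

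For $(3)$ I would fix a regular component $\cC$ with quasi-simple modules $w_i:=\tau^{-i}w_0$, $i\in\ZZ$, and study $J:=\{i\in\ZZ\mid w_i\in\cT(\cE)\}$. Using the Auslander--Reiten formula and the $\tau$-equivalence, $\Ext(E,w_i)\cong D\Hom(w_0,\tau^{i+1}E)$. Standard Hom-growth results for regular modules over a wild hereditary algebra (Kerner, see \cite{Ker2}) give, for any fixed $E_0\in\cE$, that $\Ext(E_0,w_i)\neq 0$ for $i\gg 0$, so $J$ is bounded above; they also give, for each single $E$, that $\Hom(w_0,\tau^{-m}E)=0$ for $m\gg 0$, i.e.\ $w_i\in\cT(\cE)$ once $i$ is very negative. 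Granting the uniform version below, $J=(-\infty,j_0]\cap\ZZ$ for $j_0:=\max J$; set $W:=w_{j_0}$. I then identify $\cT(\cE)\cap\cC$ with the predecessor cone $(\to W)$, i.e.\ with the modules $w_i[l]$ whose quasi-top $w_{i+l-1}$ satisfies $i+l-1\leq j_0$. For ``$\subseteq$'': the quasi-top is a quotient of $w_i[l]$, so quotient-closure forces $w_{i+l-1}\in\cT(\cE)$ and $i+l-1\leq j_0$. For ``$\supseteq$'': by $\tau$-closure $J$ is downward closed, so all of $w_i,\dots,w_{i+l-1}$ lie in $\cT(\cE)$ when $i+l-1\leq j_0$, and filtering $w_i[l]$ by the sequences $0\to w_i[l-1]\to w_i[l]\to w_{i+l-1}\to 0$ together with closure under extensions places $w_i[l]$ in $\cT(\cE)$. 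This exhibits $\cT(\cE)\cap\cC=(\to W)$ as the non-empty cone of predecessors of the unique quasi-simple $W$.

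The step I expect to be the real obstacle is precisely the \emph{uniform} vanishing that makes $J$ non-empty: there must exist $N$ with $\Hom(w_0,\tau^{-m}E)=0$ for \emph{all} $E\in\cE$ simultaneously and all $m\geq N$. The per-module statement is routine, but promoting it to a single bound $N$ is exactly where the hypothesis of bounded dimension is indispensable; I would obtain it from a quantitative estimate showing that $\dim_k\Hom(w_0,\tau^{-m}E)$ is eventually $0$ at a rate controlled only by $\dimu E$, via the action of $\Phi^{-m}$ on the finitely many dimension vectors occurring in $\cE$. Everything else is bookkeeping with the Auslander--Reiten formula and the two exact sequences above.
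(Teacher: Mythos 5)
Your overall strategy matches the paper's: rewrite the defining condition as $\Hom(M,\tau E)=0$ via the Auslander--Reiten formula, reduce $\tau$-closure to showing $\Ext(E,M)=0\Rightarrow\Hom(M,E)=0$, and invoke the structural fact that a non-zero map from a regular module into an elementary $E$ has preinjective cokernel (\cite[1.3]{KerLuk1}). Your implementation of that key step is a genuine (and correct) variant: where the paper composes $f\colon M\to E$ with a non-zero $g\colon E\to\tau E$ (which exists because $\Ext(E,E)\neq 0$) and uses Lemma \ref{CompElementary} to get $0\neq gf\in\Hom(M,\tau E)$, you instead run the long exact sequence of $0\to\im f\to E\to\coker f\to 0$ to get $\Ext(E,\im f)\twoheadrightarrow\Ext(E,E)\neq 0$ and then lift along $M\twoheadrightarrow\im f$. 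Both arguments rest on the same two inputs (preinjectivity of the cokernel and $\Ext(E,E)\neq 0$), so this is a cosmetic rather than conceptual difference. For $(3)$ the paper simply defers to \cite[3.3]{Wor1} and \cite[10.7]{Ker3}; your sketch of that argument, including the observation that the bounded-dimension hypothesis is what makes the vanishing uniform over $\cE$, is consistent with what those references provide.

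There is, however, one genuine gap: your proof of closure under $\tau$ only treats $M$ regular and $M$ preinjective. Your contrapositive argument needs $\im f$ to be a \emph{regular} submodule of $E$, and for that you use $\Hom(\cR,\cP)=0$ applied to the quotient $M\twoheadrightarrow\im f$ --- which requires $M$ itself to be regular. If $M$ were preprojective and lay in $\cT(\cE)$, $\im f$ could be preprojective, \cite[1.3]{KerLuk1} would not apply, and indeed $\Hom(M,E)=0$ is simply false for general preprojective $M$. The paper closes this case by showing that $\cT(\cE)$ contains no preprojective module at all: if $P$ is preprojective with $\tau^l P=P_i$ projective, then $\Hom(P,\tau E)\cong\Hom(P_i,\tau^{l+1}E)\cong(\tau^{l+1}E)_{3-i}$, which is non-zero because every regular $\cK_r$-module is sincere (non-sincere modules are semisimple). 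You need this (or an equivalent) extra step to make closure under $\tau$ complete; everything else in your proposal goes through.
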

\begin{proof}
Since $\Ext^2 = 0$, the category is closed under images. Let $M$ be indecomposable with $\Ext(E,M) = 0$ for all $E \in \cE$. Then the Auslander-Reiten formula yields $0 = \dim_k \Hom(M,\tau E)$ for all $E \in \cE$. Assume that $P = M$ is preprojective,  Let $l \in \NN_0$ such that $\tau^l P$ is projective, then $\tau^l P = P_i$ for an $i \in \{1,2\}$ and $0 = \dim_k \Hom(P,\tau E) = \dim_k \Hom(\tau^l P,\tau^{l+1} E) = (\dimu (\tau^{l+1} E))_{3-i}$. This is a contradiction, since every non-sincere $\cK_r$-module is semi-simple. Hence $M$ is regular or preinjective.\\ 
We are going to show that $\Ext(E,\tau M) = 0$. In view of the Auslander-Reiten formula we get $\dim_k \Ext(E,\tau M) = \dim_k \Hom(M,E)$. Let $f \colon M \to E$ be a morphism and assume that $f \neq 0$. Since $0 = \Hom(\cI,\cR)$ the module $M$ is not preinjective and therefore regular. As a regular module $E$ has self-extensions $($see \ref{Extgeq2}$)$ and hence $E \notin \cT(\cE)$. Therefore $0 \neq \dim_k \Ext(E,E) = \dim_k \Hom(E,\tau E)$ and we find $0 \neq g \in \Hom(E,\tau E)$. Lemma $\ref{CompElementary}$ provides a non-zero morphism
\[ M \stackrel{f}{\to} E \stackrel{g}{\to} \tau E.\]
We conclude $0 \neq \dim_k \Hom(M,\tau E) = \dim_k \Ext(E,M) = 0$, a contradiction. 
Now note that $I_1,I_2$ are contained in $\cT(\cE)$. The existence of the cones can be shown as in \cite[3.3]{Wor1} using the upper bound and \cite[10.7]{Ker3}.
\end{proof}

\noindent The next result follows by the Auslander-Reiten formula and duality since $D(E)$ is elementary if and only if $E$ is elementary.
\begin{proposition}\label{Cone2}
Let $\cE$ be a family of elementary modules of bounded dimension and 
put $ \cF(\cE) := \{ M \in \modd \cK_r \mid \forall E \in \cE: \ \Hom(E,M) = 0\}$. Then the following statements hold.
\begin{enumerate}[topsep=0em, itemsep= -0em, parsep = 0 em, label=$(\arabic*)$]
\item $\cF(\cE)$ is closed under extensions, submodules and $\tau^{-1}$.
\item $\cF(\cE)$ contains all preprojective modules.
\item For each regular component $\cC$ the set $\cF(\cE) \cap \cC$ forms a non-empty cone in $\cC$, which consists of the successors of a uniquely determined quasi-simple module in $\cC$.
\end{enumerate}
\end{proposition}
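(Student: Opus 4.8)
The plan is to deduce Proposition~\ref{Cone2} from Proposition~\ref{Cone1} by means of the duality $D$, exploiting the explicit interplay between $\Hom$, $\Ext$, and $\tau$ supplied by the Auslander--Reiten formula. The key observation, already flagged in the sentence preceding the statement, is that $D$ sends elementary modules to elementary modules: indeed $D$ is an exact contravariant self-equivalence of $\modd \cK_r$, so a short exact sequence $0 \to A \to E \to B \to 0$ with $A,B$ regular nonzero exists if and only if the dualized sequence $0 \to DB \to DE \to DA \to 0$ exists, and $D$ preserves regularity (it interchanges $\cP$ and $\cI$ while fixing $\cR$ setwise). Hence $\cE$ is a family of elementary modules of bounded dimension precisely when $D\cE := \{DE \mid E \in \cE\}$ is, and the latter is again bounded since $\dim_k DE = \dim_k E$.

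First I would rewrite the defining condition of $\cF(\cE)$ homologically so as to match the form of $\cT$. Using the Auslander--Reiten formula $\Ext(X,Y) \cong D\Hom(Y,\tau X)$, one has for any $M$ and any elementary $E$ the chain
\[ \Hom(E,M) \cong \Hom(\tau^{-1}E, \tau^{-1} M) \cong D\Ext(\tau^{-1}M, \tau^{-1}E), \]
but the cleaner route is to relate $\cF(\cE)$ directly to $\cT$ of the dual family. Applying $D$, a module $M$ satisfies $\Hom(E,M) = 0$ for all $E \in \cE$ if and only if $\Hom(DM, DE) = 0$ for all $E \in \cE$, i.e. if and only if $DM \in \cF(D\cE)$; and via the Auslander--Reiten formula $\Hom(DM,DE) \cong D\Ext(DE, \tau DM)$ one converts the $\Hom$-vanishing into the $\Ext$-vanishing that defines $\cT$. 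I would therefore aim to establish the identity $\cF(\cE) = D(\cT(D\cE))$ (or the $\tau$-shifted variant forced by the AR formula), so that each of the three assertions transports from Proposition~\ref{Cone1} across $D$.

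With such an identity in hand the three claims follow formally. Closure of $\cT(D\cE)$ under extensions, images, and $\tau$ dualizes—since $D$ is exact contravariant and intertwines $\tau$ with $\tau^{-1}$ up to the projective/injective bookkeeping—into closure of $\cF(\cE)$ under extensions, submodules, and $\tau^{-1}$, giving (1). Statement (2) follows because $\cT(D\cE)$ contains all preinjectives (Proposition~\ref{Cone1}(2)) and $D$ carries preinjectives to preprojectives. For (3), $D$ restricts to a duality on each regular component, reversing the direction of irreducible maps and hence turning a cone of predecessors of a quasi-simple module into a cone of successors of a quasi-simple module; the uniqueness transports as well.

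The main obstacle I anticipate is bookkeeping the $\tau$-shift correctly. The Auslander--Reiten formula introduces a $\tau$ (or $\tau^{-1}$) when passing between $\Hom$ and $\Ext$, so the naive identity $\cF(\cE) = D\,\cT(D\cE)$ may need to be corrected to $\cF(\cE) = D\,\tau^{\pm 1}\,\cT(D\cE)$ or phrased on the level of the generating quasi-simple module rather than the cone as a set. Since $\tau$ is an equivalence preserving each regular component and shifting only within it, any such discrepancy merely relabels the apex quasi-simple module and does not affect the conclusions; but I would verify the exact shift explicitly, checking it against the two invariant objects $I_1,I_2 \leftrightarrow P_1,P_2$ to fix the indexing. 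Everything else is a routine transfer along the exact contravariant self-equivalence $D$.
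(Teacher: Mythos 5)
Your proposal is correct and is exactly the paper's argument: the paper disposes of this proposition in one line, remarking that it ``follows by the Auslander--Reiten formula and duality since $D(E)$ is elementary if and only if $E$ is elementary,'' which is precisely the transfer $\cF(\cE) = D\bigl(\cT(\cE')\bigr)$ for a suitably $\tau$-shifted dual family $\cE'$ of elementary modules (still of bounded dimension) that you describe. The only blemish is the parenthetical claim that $\Hom(DM,DE)=0$ for all $E$ means $DM\in\cF(D\cE)$ --- the arguments are in the wrong order there --- but your subsequent passage to the $\Ext$-vanishing condition via the AR formula, together with your explicit flagging of the $\tau$-shift bookkeeping, lands on the intended identification.
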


\noindent Note that $\cF(\cE)$ is a torsion free class class of some torsion pair $(\cT,\cF(\cE))$ and $\cT(\cE)$ is the torsion class of some torsion pair $(\cT(\cE),\cF)$.

\begin{Lemma}\label{elementary}
Let $M,N$ be indecomposable modules with $\dimu M = (c,1), \dimu N = (1,c)$, $1 \leq c < r$. Then the following statements hold.
\begin{enumerate}[topsep=0em, itemsep= 0em, parsep = 0 em, label=$(\alph*)$]
\item $\tau^z M$ and $\tau^z N$ are elementary for all $z \in \ZZ$. Moreover every proper factor of $M$ is injective and every proper submodule of $N$ is projective.
\item Every proper factor module of $\tau^l M$ is preinjective and every proper submodule of $\tau^{-l} N$ is preprojective for $l \in \NN_0$. 

\end{enumerate}
\end{Lemma}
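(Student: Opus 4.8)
The plan is to reduce everything to a direct analysis of the $2$-vertex representations of dimension vector $(c,1)$, and then to propagate the conclusions along the $\tau$-orbit using the fact that $\tau^{\pm 1}$ sends suitable short exact sequences to short exact sequences. First I would record two preliminary reductions. Since $r \geq 3$ and $1 \leq c \leq r-1$, the quadratic form satisfies $q(c,1) = q(1,c) = c^2 - rc + 1 \leq 2 - r < 0$, so by the Kac-type theorem stated above both $M$ and $N$ are regular, and hence so are $\tau^z M$ and $\tau^z N$ for all $z$. Moreover the duality $D$ interchanges the dimension vectors $(c,1)$ and $(1,c)$, sends regular modules to regular modules, exchanges sub- and factor modules as well as $\cP$ and $\cI$, and satisfies $D\tau = \tau^{-1}D$; as $DE$ is elementary iff $E$ is, every assertion about $N$ is the $D$-image of the corresponding assertion about an indecomposable of dimension vector $(c,1)$. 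I would therefore carry out the argument for $M$ only.

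For the ``moreover'' part of $(a)$ I would argue on representations. Writing $\gamma_i = M(\gamma_i) \colon M_1 \to M_2 = k$, indecomposability of $M$ forces $\bigcap_i \ker \gamma_i = 0$, since any vector in this intersection spans a subrepresentation isomorphic to the simple injective $I_1$ of dimension vector $(1,0)$, which would split off as a direct summand. Consequently any nonzero submodule $M' \subseteq M$ has $M'_2 = M_2$, because $M'_2 = 0$ would force $M'_1 \subseteq \bigcap_i \ker \gamma_i = 0$. Hence every proper factor $M/M'$ is concentrated at vertex $1$ with zero structure maps, i.e. is a direct sum of copies of $I_1$, and in particular injective. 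This immediately yields that $M$ is elementary: in any sequence $0 \to A \to M \to B \to 0$ with $A$ regular nonzero, the submodule $A$ has full vertex-$2$ part, so $B$ is injective and cannot be regular nonzero. The corresponding statements for $N$ follow by applying $D$.

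To obtain the full strength of $(a)$, namely $\tau^z M$ elementary for \emph{all} $z \in \ZZ$, the key step is that the class of elementary modules is closed under $\tau$ and $\tau^{-1}$; I would either invoke this from \cite{KerLuk1} or prove it directly. Given a putative destabilizing sequence $0 \to A \to \tau E \to B \to 0$ with $A,B$ regular nonzero, all three terms are regular and hence have no injective direct summands, so applying $\tau^{-1}$ (as an autoequivalence of the bounded derived category, or equivalently using that $\tau^{-1}$ preserves a short exact sequence all of whose terms have no injective summand) turns the associated triangle into a short exact sequence $0 \to \tau^{-1}A \to E \to \tau^{-1}B \to 0$ of regular modules, contradicting that $E$ is elementary; the statement for $\tau^{-1}E$ is dual, using that regular modules have no projective summands. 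Iterating from the already-established elementariness of $M$ gives $\tau^z M$ elementary for every $z$, and then $\tau^z N$ by $D$.

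Finally, for $(b)$ I would induct on $l$, the base case $l = 0$ being the sharper statement that proper factors of $M$ are injective, proved above. For the step, let $C$ be a proper factor of $\tau^l M$ with kernel $K \neq 0$. Since $\Hom(\cR,\cP) = 0$, $C$ has no preprojective summand, so it suffices to rule out a nonzero regular summand $C_{\cR}$. If $C_{\cR} \neq 0$, composing with the projection gives a surjection $\tau^l M \twoheadrightarrow C_{\cR}$ whose kernel $K'$ contains $K$ and is therefore nonzero; applying $\tau^{-1}$ as above produces a short exact sequence $0 \to \tau^{-1}K' \to \tau^{l-1}M \to \tau^{-1}C_{\cR} \to 0$, which exhibits $\tau^{-1}C_{\cR}$ as a proper factor of $\tau^{l-1}M$, hence preinjective by the inductive hypothesis. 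But $C_{\cR}$ regular forces $\tau^{-1}C_{\cR}$ regular, a contradiction; thus $C$ is preinjective. The statement about submodules of $\tau^{-l}N$ is the $D$-dual. The main obstacle throughout is precisely the exactness behaviour of $\tau^{\pm 1}$: one must check that the terms to which $\tau^{\pm 1}$ is applied have no injective (resp. projective) summands, so that the image of the triangle really is a short exact sequence of modules rather than a genuine complex — and this is exactly what the regularity of the modules involved (and the absence of preinjective summands in submodules of regular modules) guarantees.
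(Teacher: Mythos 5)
Your proof is correct and follows essentially the same route as the paper: reduce to $M$ by duality, show every proper submodule of $M$ has nonzero vertex-$2$ part so that proper factors are injective (hence $M$ is elementary), invoke the $\tau$-stability of elementary modules from \cite{KerLuk1}, and transport factor modules along $\tau^{-1}$ for part $(b)$. The only cosmetic differences are that you argue $\bigcap_i\ker M(\gamma_i)=0$ directly where the paper uses $\Hom(\cI,\cR)=0$, and you induct on $l$ where the paper applies $\tau^{-l}$ in one step.
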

\begin{proof}
We will give the proofs for $M$. The statements for $N$ follow by duality.\\
$(a)$ By \cite[1.1]{KerLuk1} it is enough to show that $M$ is elementary. Since $(c,1)$ is an imaginary root of the quadratic form, $M$ is regular. Now let $0 \subset X \subset M$ be a proper submodule with dimension vector $\dimu X = (a,b)$. Then $b = 1$ since $\Hom(\cI,\cR)= 0$. Hence $\dimu M/X = (c-a,0)$ and $M/X$ is injective.\\
$(b)$ Let $l \geq 1$. Since a proper factor $\tau^l M \to X$ with $X$ regular induces a proper regular factor $M \to \tau^{-l} X$, the statement follows by $(a)$.
\end{proof}

\subsection{An algebraic family of test-modules}
Let $r \geq 2$. Now we take a closer look at the modules $(X_\alpha)_{\alpha \in k^r\setminus \{0\}}$. Let us start this section by recalling some definitions from \cite{Wor1} and the construction of the module family. We use a slighty different notation, since we are only interested in the case $B(2,r) = \cK_r$.
\noindent For $\alpha \in k^{r}$ and $M \in \modd \cK_r$ we define $x_{\alpha} := \alpha_1 \gamma_1 + \ldots + \alpha_r \gamma_r$ and denote with $x^M_{\alpha} \colon M \to M$ the linear operator associated to $x_\alpha$. 

\begin{Definition}
For $\alpha \in k^r \setminus \{0\}$, the map $\overline{\alpha} \colon \langle e_2 \rangle_k = P_1 \to P_2, e_2 \mapsto \alpha_1 \gamma_1 + \ldots + \alpha_r \gamma_r = x_\alpha$ defines an embedding of $\cK_r$-modules. We now set $X_\alpha := \coker \overline{\alpha}$.
\end{Definition}
These modules are the "test"-modules introduced in \cite{Wor1}. In fact $\im \overline{\alpha}$ is a $1$-dimensional submodule of $P_2$ contained inside the radical $\rad(P_2)$ of the local module $P_2$.   From the definition we get an exact sequence $0 \to P_1 \to P_2 \to X_\alpha \to 0$ and $\dim X_\alpha = (1,r)-(0,1) = (1,r-1)$. Since $P_2$ is local with semi-simple radical $\rad(P_2)=P^r_1$ it now seems natural to study embeddings $P^d_1 \to P_2$ for $1\leq d < r$ and the corresponding cokernels. This motivates the next definition. We restrict ourselves to $d < r$ since otherwise the cokernel is the simple injective module. 

\begin{Definition}
Let and $1 \leq d < r$. For $T = (\alpha_1,\ldots,\alpha_d) \in (k^r)^d$ we define $\overline{T} \colon (P_1)^d \to P_2$ as the $\cK_r$-linear map 
\[ \overline{T}(x) = \sum^d_{i=1} \overline{\alpha}_i \circ \pi_{i}(x),\]
where $\pi_i \colon (P_1)^d \to P_1$ denotes the projection onto the $i$-th coordinate. 
\end{Definition}

The map $\overline{T}$ is injective if and only if $T$ is linearly independent; then we have $\dimu \coker \overline{T} = \dimu P_2 - d \dimu P_1 = (1,r-d)$ and $\coker \overline{T}$ is indecomposable, because $P_2$ is local. Moreover $(1,r-d)$ is an imaginary root of $q$ and therefore $\coker \overline{T}$ is regular indecomposable and by $\ref{elementary}$ elementary. We define $\langle T \rangle := \langle \alpha_1,\ldots,\alpha_d \rangle_k$.

\begin{Lemma}
Let $T,S \in (k^r)^d$ be linearly independent, then $\coker \overline{T} \cong \coker \overline{S}$ if and only if $\langle T \rangle = \langle S \rangle$.
\end{Lemma}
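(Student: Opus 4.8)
The plan is to prove both implications of the stated equivalence, relating the isomorphism class of $\coker \overline{T}$ to the subspace $\langle T \rangle \subseteq k^r$. The backward direction should be nearly immediate from the construction, while the forward direction is where the real content lies. Throughout I will use that both $\coker \overline{T}$ and $\coker \overline{S}$ are indecomposable regular elementary modules of dimension vector $(1,r-d)$, as established in the discussion preceding the lemma.

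For the direction $\langle T \rangle = \langle S \rangle \implies \coker \overline{T} \cong \coker \overline{S}$, the idea is that changing the spanning tuple $T$ to another tuple $S$ with the same span amounts to precomposing $\overline{T}$ with an automorphism of $(P_1)^d$. More precisely, if $\langle T \rangle = \langle S \rangle$ then $S = T \cdot A$ for some invertible $d \times d$ matrix $A$ over $k$ (both being bases of the same $d$-dimensional space), and this matrix induces an automorphism $\varphi_A \colon (P_1)^d \to (P_1)^d$ since $\End(P_1) = k$ and hence $\Hom((P_1)^d,(P_1)^d) \cong M_d(k)$. One checks directly that $\overline{S} = \overline{T} \circ \varphi_A$, so the two maps have the same image $\langle T \rangle$ inside $P_2$ (as a submodule of $\rad P_2 \cong P_1^r$), whence the cokernels are canonically isomorphic. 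The key observation is simply that $\im \overline{T}$ depends only on $\langle T \rangle$, not on the chosen basis.

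For the forward direction $\coker \overline{T} \cong \coker \overline{S} \implies \langle T \rangle = \langle S \rangle$, I would argue via the submodule structure. First I note that the exact sequence $0 \to (P_1)^d \xrightarrow{\overline{T}} P_2 \to \coker \overline{T} \to 0$ exhibits $(P_1)^d$ as the kernel of a projective cover (or at least as a distinguished submodule), and I want to recover $\langle T \rangle$ intrinsically from $\coker \overline{T}$. The cleanest route is to apply $\Hom(P_1, -)$ and $\Hom(-, P_2)$ appropriately; by Lemma 1.1.6(a) every proper submodule of $\coker \overline{T}$ is projective, i.e. a copy of $P_1^j$, so $\coker \overline{T}$ has a well-defined top and the projective cover of $\coker \overline{T}$ is $P_2$ with kernel $(P_1)^d$. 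Given an isomorphism $\psi \colon \coker \overline{T} \to \coker \overline{S}$, lifting it through the projective covers gives a commutative square with a map $P_2 \to P_2$, which must be a scalar $\lambda \in k^\times$ since $\End(P_2) = k$ ($P_2$ being local indecomposable). This scalar intertwines $\overline{T}$ and $\overline{S}$ up to an automorphism of the source, forcing $\im \overline{S} = \lambda \cdot \im \overline{T} = \im \overline{T}$, hence $\langle S \rangle = \langle T \rangle$.

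The main obstacle I anticipate is making the projective-cover lifting argument fully rigorous: one must verify that $P_2$ is genuinely the projective cover of $\coker \overline{T}$ (equivalently that $\overline{T}$ is a radical embedding, i.e. $\im \overline{T} \subseteq \rad P_2$, which holds because $\im \overline{\alpha}_i \subseteq \rad P_2$ by construction), and that the lifted endomorphism of $P_2$ is a nonzero scalar rather than possibly zero. Nonzero-ness follows because $\psi$ is an isomorphism, so the induced map on tops is nonzero; since $P_2$ is local its top is simple and any endomorphism inducing a nonzero map on the top is an automorphism, hence a unit scalar by $\End(P_2) = k$. The only subtlety is bookkeeping the identification $\rad P_2 \cong P_1^r$ so that ``$\im \overline{T}$ as a subspace'' literally recovers $\langle T \rangle \subseteq k^r$; this is exactly the content of the defining map $\overline{\alpha} \colon P_1 \to P_2$ sending $e_2 \mapsto x_\alpha$, under which $\im \overline{T}$ is spanned by $\{x_{\alpha_1}, \ldots, x_{\alpha_d}\}$, and the correspondence $\alpha \mapsto x_\alpha$ is the linear isomorphism $k^r \cong \langle \gamma_1, \ldots, \gamma_r \rangle_k = \rad P_2$.
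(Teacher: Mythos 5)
Your proof is correct, but the hard direction takes a genuinely different route from the paper's. The paper argues by contrapositive at the level of homomorphisms: assuming $\langle T \rangle \neq \langle S \rangle$, it takes an arbitrary nonzero $\varphi \colon \coker \overline{T} \to \coker \overline{S}$, shows it must be bijective (every proper submodule of $\coker \overline{S}$ lies in $\add P_1$ and $\Hom(\cR,\cP)=0$), and then runs an explicit element computation with the basis $\{e_1,\gamma_1,\dots,\gamma_r\}$ of $P_2$ and the idempotent $e_2$ to produce a nonzero element of the kernel of $\varphi$ --- thereby proving the stronger statement $\Hom(\coker \overline{T},\coker \overline{S})=0$ whenever the spans differ. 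You instead lift a given isomorphism through the projective presentations $0 \to (P_1)^d \to P_2 \to \coker \overline{T} \to 0$, use $\End(P_2)=e_1\cK_r e_1=k$ to see the lifted endomorphism is a nonzero scalar, and conclude $\im \overline{T}=\im \overline{S}$ by a dimension count; note you do not actually need the projective-cover property you worry about, only projectivity of $P_2$ and surjectivity of the quotient maps. Your argument is arguably more conceptual and shorter on bookkeeping; the paper's element-wise computation buys the extra conclusion that the two cokernels are Hom-orthogonal, not merely non-isomorphic. Both correctly reduce the easy direction to the observation that $\im \overline{T}$ depends only on $\langle T \rangle$.
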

\begin{proof}
If  $\langle T \rangle = \langle S \rangle$, then the definition of $\overline{T}$ and $\overline{S}$ implies that $\im \overline{T} = \im \overline{S}$. Hence $\coker \overline{T} = P_2/\im{\overline{T}} = \coker \overline{S}$. Now let $\langle S \rangle \neq \langle T \rangle$ and assume that $0 \neq \varphi \colon \coker \overline{T} \to \coker \overline{S}$ is $\cK_r$-linear. Since $\coker \overline{S}$ is local with radical $P^{r-d}_1$ and $\Hom(\cR,\cP) = 0$ the map $\varphi$ is surjective and therefore injective. Recall that $P_2$ has $\{e_1,\gamma_1,\ldots,\gamma_r\}$ as a basis. Let $x \in P_2$ such that 
$\varphi(e_1 + \im \overline{T})  = x + \im \overline{S}$. Since $\varphi$ is $\cK_r$-linear we get 
\[  x + \im \overline{S} = e_1  \varphi(e_1 + \im \overline{T}) = e_1 x + \im \overline{S},\]
and hence $x-e_1 x \in \im \overline{S}$. Write $x = \mu e_1 + \sum^r_{i=1} \mu_i \gamma_i$, then $x - \mu e_1 =  \sum^r_{i=1} \mu_i \gamma_i = x - e_1 x \in \im  \overline{S}$ and $x + \im \overline{S}= \mu e_1 + \im \overline{S}$.\\
The assumption $\langle S \rangle \neq \langle T \rangle$ yields  $y \in \im \overline{S}\setminus \im \overline{T} \subseteq \langle \gamma_1,\ldots,\gamma_r\rangle_k$. Then $y + \im \overline{T} \neq 0$ and 
\[ \varphi(y + \im \overline{T}) = y \varphi(e_1 + \im \overline{T}) = \mu y (e_1 + \im \overline{S}) = \mu y + \im \overline{S} = \im \overline{S},\]
a contradiction to the injectivity of $\varphi$. Hence $\Hom(\coker \overline{T},\coker \overline{S}) = 0$ and $\coker \overline{T} \not\cong \coker \overline{S}$.
\end{proof}

\begin{Definition}
Let $r \geq 2$ and $U \in \Gr_{d,r}$ with basis $T = (u_1,\ldots,u_d)$, we define $X_U := \coker \overline{T}$.
\end{Definition}

\begin{Remark} $X_U$ is well defined $($up to isomorphism$)$ with dimension vector $\dim X_U =  (1,r-d)$. $X_U$ is elementary for $r \geq 3$ and quasi-simple for $r=2$.
\end{Remark}

For a module $X$ we define $\add X$ as the category of summands of finite direct sums of $X$ and $Q^d$  denotes the set of isomorphism classes $[M]$ of indecomposable modules $M$ with dimension vector $(1,r-d)$ for $1 \leq d < r$.

\begin{proposition}\label{dim}
Let $M$ be indecomposable.
\begin{enumerate}[topsep=0em, itemsep= -0em, parsep = 0 em, label=$(\alph*)$]	
\item If $[M] \in Q^d$, then there exists $U \in \Gr_{d,r}$ with $M \cong X_U$.
\item The map $\varphi \colon \Gr_{d,r} \to Q^d;  U \mapsto [X_U]$,
is bijective. 
\item Let $1 \leq c \leq d < r$ and $[M] \in Q^d$. There is $[N] \in Q^c$ and an epimorphism $\pi \colon N \to M$.
\end{enumerate}
\end{proposition}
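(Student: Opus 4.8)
The plan is to reduce all three parts to $(a)$, which identifies every indecomposable of dimension vector $(1,r-d)$ as one of the cokernels $X_U$; parts $(b)$ and $(c)$ then follow with little extra work.

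For $(a)$ I would take $M$ indecomposable with $\dimu M=(1,r-d)$ and first show that $M$ is generated by its one-dimensional part $M_1$ at the source, so that $M$ becomes a quotient of the projective cover $P_2$ of the simple module at vertex $1$. Concretely, I would let $M'\subseteq M$ be the submodule generated by $M_1$; then $M/M'$ is concentrated at vertex $2$ and hence a direct sum of copies of the simple module at vertex $2$, which equals $P_1$ and is projective. Thus the sequence $0\to M'\to M\to M/M'\to 0$ splits, and since $(M')_1=M_1\neq0$ the indecomposability of $M$ forces $M/M'=0$, i.e. $M=M'$. This yields an epimorphism $P_2\twoheadrightarrow M$ whose kernel $K$ has $\dimu K=(1,r)-(1,r-d)=(0,d)$. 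As $K_1=0$ we have $K\subseteq\rad(P_2)=\langle\gamma_1,\dots,\gamma_r\rangle_k$, so $K$ is precisely a $d$-dimensional subspace $U\in\Gr_{d,r}$ (under $\gamma_i\leftrightarrow e_i$), automatically a submodule as vertex $2$ is a sink. Picking a basis $T$ of $U$ gives $\im\overline{T}=U=K$ by construction, whence $M=P_2/K=\coker\overline{T}=X_U$.

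Part $(b)$ is then immediate: $X_U$ is indecomposable of dimension vector $(1,r-d)$, so $\varphi$ is well defined into $Q^d$; surjectivity is exactly $(a)$ and injectivity is the preceding lemma, which gives $X_U\cong X_V$ iff $U=V$. For $(c)$, given $[M]\in Q^d$ I would write $M\cong X_U$ with $U\in\Gr_{d,r}$ using $(a)$ and, since $c\le d$, choose a $c$-dimensional subspace $W\subseteq U$. The chain of submodules $W\subseteq U\subseteq P_2$ produces the natural epimorphism $\pi\colon X_W=P_2/W\twoheadrightarrow P_2/U=M$; as $\dimu X_W=(1,r-c)$ and $X_W$ is indecomposable, $N:=X_W$ satisfies $[N]\in Q^c$ and is the module sought.

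I expect the only real obstacle to be the generation step in $(a)$, namely ruling out a superfluous top at vertex $2$. This is exactly where projectivity of the simple module $P_1$ at the sink enters, through the splitting of $0\to M'\to M\to M/M'\to 0$ combined with indecomposability; the remainder is bookkeeping with dimension vectors and the identification of submodules of $\rad(P_2)$ with subspaces of $k^r$.
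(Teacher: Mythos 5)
Your proposal is correct and follows essentially the same route as the paper: show $M$ is a quotient of $P_2$ by a $d$-dimensional submodule of $\rad(P_2)\cong k^r$, identify that submodule with some $U\in\Gr_{d,r}$ so that $M\cong X_U$, and derive $(b)$ from the earlier lemma on $\coker\overline{T}\cong\coker\overline{S}$ and $(c)$ from a subspace inclusion $W\subseteq U$. The only (harmless) difference is in how you see that $M$ is generated by $M_1$: you split off the projective quotient $M/M'$ concentrated at the sink, whereas the paper uses $\dim_k\Hom(P_2,M)=\dim_k M_1=1$ together with $\Hom(P_2,P_1)=0$ to force surjectivity of a nonzero map $P_2\to M$; your version actually makes explicit a step the paper leaves terse.
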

\begin{proof}
$(a)$ Let $0 \subsetneq X \subsetneq M$ be a submodule of $M$. Then $X \subseteq \rad(M) = {P^{r-d}_1}$ and $X$ is in $\add P_1$.
It is $1 = \dim_k M_1 = \dim_k \Hom(\cK_r e_1,M) = \dim_k \Hom(P_2,M)$, so we find a non-zero map $\pi \colon P_2 \to M$. 
Since every proper submodule of $M$ is in $\add P_1$ and $\Hom(P_2,P_1) = 0$, the map $\pi \colon P_2 \to M$ is surjective and yields an exact sequence $0 \to P^d_1 \stackrel{\iota}\to P_2 \stackrel{\pi}{\to} M \to 0$.
For $1 \leq i \leq d$ there exist uniquely determined elements $\beta_i,\alpha^1_i,\ldots,\alpha^r_i \in k$ such that $\iota(g_i(e_2)) = \beta_i e_1 + \alpha^1_i \gamma_1 + \ldots + \alpha^r_i \gamma_r \in P_2 = \langle \gamma_i,e_1 \mid 1 \leq i \leq r \rangle_k$, where $g_i \colon P_1 \to P^d_1$ denotes the embedding into the $i$-th coordinate. Since $e_2$ is an idempotent with $e_2 \gamma_j = \gamma_j$ $(1 \leq j \leq r)$  and $e_2 e_1 = 0$   we get
\[ \alpha^1_i \gamma_1 + \ldots + \alpha^r_i \gamma_r = e_2 (\iota \circ g_i(e_2)) = (\iota \circ g_i)(e_2\cdot e_2) = (\iota \circ g_i)(e_2) = \beta_i e_1 + \alpha^1_i \gamma_1 + \ldots + \alpha^r_i \gamma_r.\]
Hence $\beta_i = 0$. Now define $\alpha_i := (\alpha^1_i,\ldots,\alpha^r_i)$, $T := (\alpha_1,\ldots,\alpha_d)$ and $U := \langle T \rangle$. It is $\iota = \overline{T}$ and by the injectivity of $\iota$ we conclude that $T$ is linearly independet and therefore $U \in \Gr_{d,r}$. Now we conclude $X_U = \coker \overline{T} = \coker \iota = M$. \\
$(b)$ This is an immediate consequence of $(a)$.\\
$(c)$ By $(a)$ we find $U$ in $\Gr_{d,r}$ with basis $T = (u_1,\ldots,u_d)$ such that $X_U \cong M$.  Let $V$ be the subspace with basis $S = (u_1,\ldots,u_c)$. Then $\im \overline{S} \subseteq \im \overline{T}$ and we get an epimorphism $\pi \colon X_V = P_2 / {\im \overline{S}} \to P_2 /{\im \overline{T}} = X_U, x + \overline{S} \to x + \overline{T}$ with $\dimu X_V = (1,r-c)$. 
\end{proof}

As a generalization of $x^M_\alpha \colon M \to M$ we introduce maps $x^M_T \colon M \to M^d$ and $y^M_T \colon M^d \to  M$ for $1 \leq d < r$ and $T \in (k^r)^d$. Note that $x^M_T = y^M_T$ if and only if $d = 1$.

\begin{Definition}
Let $1 \leq d < r$ and $T = (\alpha_1,\ldots,\alpha_d) \in (k^r)^d$. We denote by $x^M_T$ and $y^M_T$ the operators \[ x^M_T \colon M \to M^d, m \mapsto (x^M_{\alpha_1}(m),\ldots,x^M_{\alpha_d}(m)),\]
\[ y^M_T \colon M^d \to M, (m_1,\ldots,m_d) \mapsto x^M_{\alpha_1}(m_1)+\ldots+x^M_{\alpha_d}(m_d).\]
\end{Definition}

\noindent It is $\im x^M_T \subseteq M_2 \oplus \ldots\oplus M_2$, $M_2 \oplus \ldots \oplus M_2 \subseteq \ker y^M_T $ and $D(x^M_T) = y^{DM}_T$ since for $f = (f_1,\ldots,f_d) \in (DM)^d$ and $m \in M$ we have
\begin{align*}
D(x^M_T)(f)(m)& = D(x^M_T)(f_1,\ldots,f_d)(m) = \sum^d_{i=1} (f_i \circ x^M_{\alpha_i})(m)\\ 
&= \sum^d_{i=1} f_i(x_{\alpha_i}.m) 
= \sum^d_{i=1} (x_{\alpha_i}.f_i)(m) \\
&= y^{DM}_T(f_1,\ldots,f_d)(m) = y^{DM}_T(f)(m).
\end{align*}

\begin{Lemma}\label{LemmaQuotient}
Let $1 \leq d < r$ and $U \in \Gr_{d,r}$. Every non-zero quotient $Q$ of $X_U$ is indecomposable. $Q$ is preinjective $($injective$)$ if $\dimu Q = (1,0)$ and regular otherwise.
\end{Lemma}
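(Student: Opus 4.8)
<br>

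The plan is to use the explicit structure of $X_U$ as the cokernel of an embedding $\overline{T} \colon P_1^d \to P_2$, together with the fact that $X_U$ is local with semisimple radical. Recall from the discussion preceding Proposition \ref{dim} that $X_U = P_2/\im\overline{T}$ has dimension vector $(1,r-d)$, is indecomposable, and (for $r \geq 3$) is elementary. First I would observe that since $P_2$ is a local module, every non-zero quotient $Q$ of $X_U$ is itself a non-zero quotient of $P_2$, hence local, hence indecomposable. This immediately settles the indecomposability claim and reduces the problem to identifying the position of $Q$ in the Auslander--Reiten quiver from its dimension vector.

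Next I would analyze the possible dimension vectors of $Q$. A non-zero quotient $Q$ of $X_U$ satisfies $\dimu Q = (a,b)$ with $(a,b) \leq (1,r-d)$ componentwise and $(a,b) \neq (0,0)$. Because $\Hom(\cR,\cP) = 0$ and $X_U$ is regular, the only way to get $a = 0$ is the zero module, so in fact $a = 1$; thus $\dimu Q = (1,b)$ with $0 \leq b \leq r-d$. The case $\dimu Q = (1,0)$ gives the simple injective module $I_2$, which is preinjective (indeed injective), handling that branch of the statement. For the remaining cases $1 \leq b \leq r-d < r$, I would invoke Kac's Theorem (the boxed Theorem in the excerpt) applied to $q(1,b) = 1 + b^2 - rb$: since $1 \leq b < r$, this quantity satisfies $q(1,b) \leq 0$ precisely when $b(r-b) \geq 1$, which holds for all $b$ in this range. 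Hence $(1,b)$ is an imaginary root and part $(c)$ of Kac's Theorem forces $Q$ to be regular.

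The one point requiring care is ensuring $Q$ cannot be preinjective (other than in the $(1,0)$ case), since a priori $q(1,b) \leq 0$ only rules out preprojective/preinjective by the uniqueness clause in part $(b)$ of Kac's Theorem; but since $q(1,b) \leq 0$ means $(1,b)$ is an imaginary root, part $(c)$ applies directly and every module with that dimension vector is regular, so no ambiguity remains. I expect the main (minor) obstacle to be the elementary verification that $q(1,b) \leq 0$ for every $1 \leq b \leq r-d$, i.e. checking the boundary behaviour at $b = 1$ and $b = r-d$; this is a short inequality $1 + b^2 \leq rb$, equivalent to $1 \leq b(r-b)$, which fails only when $b = 0$ or $b = r$, both excluded here. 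With this in hand the two cases $\dimu Q = (1,0)$ and $\dimu Q = (1,b)$ for $1 \leq b \leq r-d$ exhaust all possibilities and yield exactly the claimed dichotomy.
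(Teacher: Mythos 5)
Your argument is correct, and it follows the same overall skeleton as the paper (constrain the dimension vector of $Q$, prove indecomposability, then read off the position in the AR quiver), but two steps are handled differently. For indecomposability, you observe that a non-zero quotient of the local module $P_2$ is again local, hence indecomposable; the paper instead supposes $Q = A \oplus B$ with both summands non-zero, notes that one summand must have dimension vector of the form $(0,c)$ and hence lie in $\add P_1$, and derives a contradiction from $\Hom(\cR,\cP) = 0$. Your locality argument is a little slicker and avoids the case analysis. For the final dichotomy, the paper only records that an indecomposable quotient of the regular module $X_U$ must be preinjective or regular and then treats $b = 0$, leaving the exclusion of preinjectives with $b \geq 1$ implicit; you make this explicit via $q(1,b) = 1 + b^2 - rb \leq 0$ for $1 \leq b \leq r-d \leq r-1$ and part $(c)$ of Kac's Theorem, which is exactly the right justification and worth spelling out. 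The constraint $a = 1$ is obtained equivalently on the kernel side in the paper (the kernel sits in $\rad X_U \cong P_1^{r-d}$, so it has dimension vector $(0,c)$) and on the quotient side in your write-up. One cosmetic slip: in the paper's notation the simple injective module with dimension vector $(1,0)$ is $I_1 = D(P_1)$, not $I_2$ (which has dimension vector $(r,1)$); this does not affect the argument.
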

\begin{proof}
Since $X_U$ is regular, we conclude with $\Hom(\cR,\cP) = 0$, that every indecomposable non-zero quotient of $X_U$ is preinjective or regular. Let $Q$ be such a quotient with $\dimu Q = (a,b)$ and $Q \neq X_U$. Since $X_U$ is local with radical ${P^{r-d}_1}$ and $\dimu = (1,r-d)$, it follows $(1,r-d) = (a,b) + (0,c)$ for some $c > 0$. Hence $a = 1$ and $Q$ is an injective module if $b = 0$. Otherwise $Q$ is also indecomposable since $b > 0$ and $Q = A \oplus B$ with $A,B \neq 0$ imply w.l.o.g $(\dimu B)_1 = 0$. Hence $B \in \add P_1$ which is a contradiction to $\Hom(\cR,\cP) = 0$.
\end{proof}

\subsection{Modules for the generalized Kronecker algebra}
In the following we will give the definition of $\cK_r$-modules $(r \geq 2)$ with constant radical rank and constant socle rank.
\begin{Definition}
Let $M$ be in $\modd \cK_r$ and $1 \leq d < r$.
\begin{enumerate}[topsep=0em, itemsep= -0em, parsep = 0 em, label=$(\alph*)$]	
\item $M$ has \textbf{constant d-radical rank} if the dimension of 
\[\Rad_U(M) := \sum_{u \in U} x^M_u(M) \subseteq M_2\]
is independent of the choice of $U \in \Gr_{d,r}$.
\item $M$ has \textbf{constant d-socle rank} if the dimension of 
\[ \Soc_U(M) := \{ m \in M \mid \forall u \in U : \ x^M_u(M) = 0 \} = \bigcap_{u \in U} \ker(x^M_u) \supseteq M_2 \]
is independent of the choice of $U \in \Gr_{d,r}$.
\item $M$ has the \textbf{equal d-radical property} if $\Rad_U(M)$ is independent of the choice of $U \in \Gr_{d,r}$
\item $M$ has the \textbf{equal d-socle property} if $\Soc_U(M)$ is independent of the choice of $U \in \Gr_{d,r}$
\end{enumerate}
The corresponding full subcategories of $\modd \cK_r$ are denoted by $\CRR_d$, $\CSR_d$, $\ERP_d$ and $\ESP_d$.
\end{Definition}

Note that $\CRR_1 = \CR = \CSR_1$, $\ERP_1 = \EIP$, $\ESP_1 = \EKP$ and for $U \in \Gr_{d,r}$ with basis $(u_1,\ldots,u_d)$ we have $\Rad_U(M) = \sum^d_{i=1} x^M_{u_i}(M)$ and $\Soc_{U}(M) = \bigcap^d_{i=1} \ker(x^M_{u_i})$.  We restrict the definition to $d < r$ since $\Gr_{r,r} = \{k^r\}$ and therefore every module in $\modd \cK_r$ is of constant $r$-socle and  $r$-radical rank. 

\begin{Lemma}\label{Radical}
Let $M$ be indecomposable and not simple. Then
\begin{enumerate}[topsep=0em, itemsep= -0em, parsep = 0 em, label=$(\alph*)$]	

\item $M \in \ESP_d$ if and only if $M_2 = \Soc_U(M)$ for all $U \in \Gr_{d,r}$.
\item $M \in \ERP_d$ if and only if $M_2 = \Rad_U(M)$ for all $U \in \Gr_{d,r}$.
\end{enumerate} 

\end{Lemma}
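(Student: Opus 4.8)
The plan is to dispose of one implication trivially and to reduce the other to a single structural observation. In both (a) and (b) the reverse implication is immediate: since $M_2 \subseteq \Soc_U(M)$ and $\Rad_U(M) \subseteq M_2$ hold for every $U$ by definition, if the common value is known to be $M_2$ then it is visibly independent of $U$, so $M \in \ESP_d$ (resp. $M \in \ERP_d$). Hence the content lies in the forward direction: assuming $\Soc_U(M)$ (resp. $\Rad_U(M)$) is constant in $U \in \Gr_{d,r}$, I must identify that constant value as $M_2$.

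For (a), first I would observe that each $\ker x^M_u$ is a submodule of $M$ containing $M_2$ (because $x^M_u$ annihilates $M_2$ and maps $M_1$ into $M_2$), hence so is every $\Soc_U(M)$. The key computation is to intersect over all $U$: since $d \geq 1$, every vector of $k^r$ lies in some $U \in \Gr_{d,r}$, so $\bigcap_{U} \Soc_U(M) = \bigcap_{u \in k^r} \ker x^M_u$, a submodule whose degree-two part is all of $M_2$ and whose degree-one part is $L := \{ m \in M_1 \mid M(\gamma_i)(m) = 0 \text{ for all } i\}$. If $\Soc_U(M)$ is constant, that common value coincides with $\bigcap_U \Soc_U(M)$, so it remains to prove $L = 0$. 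A nonzero $m \in L$ spans a submodule isomorphic to the simple injective $I_1$ (dimension vector $(1,0)$); being injective, such a submodule splits off as a direct summand, contradicting that $M$ is indecomposable and not simple. Hence $L = 0$ and the constant socle equals $M_2$.

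For (b) I would run the dual argument. Summing over all $U$ gives $\sum_U \Rad_U(M) = \sum_{u \in k^r} \im x^M_u = \sum_{i=1}^r \im M(\gamma_i) =: R \subseteq M_2$, and under the constancy hypothesis the common radical equals this sum $R$, so it suffices to see $R = M_2$. Here $(M_1, R)$ is a submodule, and the quotient $M/(M_1,R)$ is a direct sum of $\dim_k(M_2/R)$ copies of the simple projective $P_1$; if $R \neq M_2$ this exhibits $P_1$ as a quotient, which splits off because $P_1$ is projective, again contradicting indecomposability and non-simplicity. Alternatively, using $D(x^M_T) = y^{DM}_T$ together with $\ker((x^M_u)^{\ast}) = (\im x^M_u)^{\perp}$, one checks $\Soc_U(DM) \cap (DM)_1 = (\Rad_U(M))^{\perp}$, whence $M \in \ERP_d \iff DM \in \ESP_d$ and $\Rad_U(M) = M_2 \iff \Soc_U(DM) = (DM)_2$; then (b) is just (a) applied to $DM$, which is again indecomposable and not simple.

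The only genuinely nontrivial step is the structural one in each part: converting an ``excess'' of the socle inside $M_1$ into a copy of the injective simple $I_1$, and a ``defect'' of the radical inside $M_2$ into a projective-simple quotient $P_1$, and then invoking that injective submodules and projective quotients always split off. Everything else is the bookkeeping identity $\bigcup_{U \in \Gr_{d,r}} U = k^r$ (valid since $d \geq 1$) together with the trivial remark that a constant family of subspaces equals its own intersection, respectively sum, over all $U$. The hypotheses that $M$ is indecomposable and non-simple are used exactly once, precisely to kill the would-be summand, and they are genuinely needed, since $M = I_1$ fails (a) and $M = P_1$ fails (b).
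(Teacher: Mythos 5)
Your proof is correct and follows essentially the same route as the paper: the reverse implications are immediate from the definitions, and the forward implications identify the constant value of $\Soc_U(M)$ (resp.\ $\Rad_U(M)$) with the intersection (resp.\ sum) over enough subspaces $U$, reducing everything to the fact that $\bigcap_{i=1}^r \ker(x^M_{e_i}) = M_2$ and $\sum_{i=1}^r x^M_{e_i}(M) = M_2$ for $M$ indecomposable and not simple. The only difference is that the paper simply cites this last fact from Farnsteiner's lecture notes, whereas you prove it directly by splitting off a copy of the simple injective $I_1$ (resp.\ a simple projective quotient $P_1$), which is a correct and self-contained substitute.
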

\begin{proof}
$(a)$ Assume that $M$ is in $\ESP_d$ and let $W:= \Soc_{U}(M)$ for $U \in \Gr_{d,r}$.
Denote by $e_1,\ldots,e_r \in k^r$ the canonical basis vectors. Since $M$ is not simple it is $($\cite[5.1.1]{Far2}$)$  $\bigcap^r_{i=1} \ker (x^M_{e_i}) = M_2$. Denote by $S(d)$ the set of all subsets of $\{1,\ldots,r\}$ of cardinality $d$. Then
\[ \bigcap_{S \in S(d)} \bigcap_{j \in S}  \ker(x^M_{e_j}) =  \bigcap^{r}_{i=1} \ker(x^M_{e_i}) = M_2.\]
Since $\langle e_j \mid j \in S\rangle_k \in \Gr_{d,r}$ and $M \in \ESP_d$ we get $\bigcap_{j \in S}  \ker(x^M_{e_j}) = W$ and hence $M_2 = \bigcap_{S \in S(d)} W = W$.\\
$(b)$ Let $M \in \ERP_d$, $U \in \Gr_{d,r}$ and $W:= \Rad_U(M)$. Since $M$ is not simple it is $($\cite[5.1.1]{Far2}$)$ $\sum^{r}_{i=1} x^M_{e_i} = M_2$ and hence 
\[  W = \sum_{S \in S(d)} \sum_{j \in S}  x^M_{e_j}(M) =  \sum^{r}_{i=1} x^M_{e_i}(M) = M_2.\]
\end{proof}

\begin{Lemma}\label{Dual}
Let $M$ be in $\modd \cK_r$ and $1 \leq d < r$ .
\begin{enumerate}[topsep=0em, itemsep= -0em, parsep = 0 em, label=$(\alph*)$]	
\item $M \in \CSR_d$ if and only if $DM \in \CRR_d$.
\item $M \in \ESP_d$ if and only if $DM \in \ERP_d$.
\end{enumerate}
\end{Lemma}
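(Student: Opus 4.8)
The plan is to translate both socle and radical into the language of the operators $x^M_T$ and $y^M_T$ and then apply the duality identity $D(x^M_T) = y^{DM}_T$ established immediately before the statement. Fix $U \in \Gr_{d,r}$ with basis $T = (u_1,\ldots,u_d)$. By the remark following the definition of the four classes, $\Soc_U(M) = \bigcap_{i=1}^d \ker(x^M_{u_i}) = \ker(x^M_T)$ and, applied to $DM$, we have $\Rad_U(DM) = \sum_{i=1}^d x^{DM}_{u_i}(DM) = \im(y^{DM}_T)$; both expressions depend only on $U$, not on the chosen basis. The identity $D(x^M_T) = y^{DM}_T$ identifies $y^{DM}_T$ with the vector-space transpose $(x^M_T)^\ast$ of the map $x^M_T \colon M \to M^d$.

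For part $(a)$, I would invoke the rank equality for a map and its transpose. Since $\im(y^{DM}_T) = \im((x^M_T)^\ast)$ has the same dimension as $\im(x^M_T)$, rank--nullity gives
\[ \dim_k \Rad_U(DM) = \rank(x^M_T) = \dim_k M - \dim_k \ker(x^M_T) = \dim_k M - \dim_k \Soc_U(M). \]
As $\dim_k M = \dim_k DM$ does not depend on $U$, the function $U \mapsto \dim_k \Soc_U(M)$ is constant if and only if $U \mapsto \dim_k \Rad_U(DM)$ is constant; that is, $M \in \CSR_d \iff DM \in \CRR_d$.

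For part $(b)$, I would use the sharper statement that $\im((x^M_T)^\ast) = (\ker x^M_T)^\perp$ as subspaces of $M^\ast = DM$, the annihilator being taken inside $DM$. This yields the subspace identity $\Rad_U(DM) = \Soc_U(M)^\perp$. Since $W \mapsto W^\perp$ is an inclusion-reversing bijection between subspaces of $M$ and subspaces of $DM$, the subspace $\Soc_U(M)$ is independent of $U$ if and only if its annihilator $\Rad_U(DM)$ is, giving $M \in \ESP_d \iff DM \in \ERP_d$.

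Because each of the two displayed relations holds for every $U$ simultaneously, both implications of each ``if and only if'' come out at once, so no separate appeal to $D^2 \cong \id$ is needed. The only points requiring care --- and the closest thing to an obstacle --- are purely bookkeeping: checking that the image/kernel reformulations are genuinely basis-independent (so that ``independent of $U$'' is unambiguous), and confirming that the module-theoretic duality $D$ really restricts to the vector-space transpose, so that $\im(y^{DM}_T)$ lands inside $(DM)_2 = (M_2)^\perp$, matching the containment $\Rad_U(DM) \subseteq (DM)_2$.
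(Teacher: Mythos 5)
Your proof is correct. Part $(a)$ is essentially the paper's own argument: both of you use the identity $D(x^M_T) = y^{DM}_T$ to write $\Rad_U(DM) = \im D(x^M_T)$ and then conclude $\dim_k \Rad_U(DM) = \dim_k M - \dim_k \Soc_U(M)$ by rank--nullity, so that constancy of one side over $U \in \Gr_{d,r}$ is equivalent to constancy of the other.

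For part $(b)$ you take a genuinely different, and in fact cleaner, route. The paper reduces the equal-property statement to the characterization $\Soc_U(M) = M_2$ (respectively $\Rad_U(DM) = (DM)_2$) from its Lemma~3.3, which is stated only for indecomposable non-simple modules, and then compares dimensions against the fixed subspaces $M_2$ and $(DM)_2$. You instead use the sharper linear-algebra identity $\im\bigl((x^M_T)^\ast\bigr) = (\ker x^M_T)^\perp$, giving the subspace equality $\Rad_U(DM) = \Soc_U(M)^\perp$, and then conclude via the fact that the annihilator map is an inclusion-reversing bijection on subspaces. This buys you two things: the argument applies verbatim to an arbitrary module $M \in \modd \cK_r$ (as the statement of the lemma requires) with no reduction to the indecomposable case, and it makes the equivalence of the \emph{equal} properties a formal consequence of the equivalence at the level of subspaces rather than of dimensions. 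Your closing remarks on basis-independence and on $\im(y^{DM}_T) \subseteq (DM)_2 = (M_2)^\perp$ are exactly the right sanity checks, and both hold.
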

\begin{proof}
Note that $\Rad_U(DM) = \im(y^{DM}_T) = \im D(x^M_T) = D(\im x^M_T)$ and hence 
\[\dim_k M - \dim_k \Soc_U(M) = \dim_k M - \dim_k \ker x^M_T = \dim_k \im x^M_T = \dim_k D(\im x^M_T) = \Rad_U(DM).\]
 Hence $M \in \CSR_d$ if and only if $DM \in \CRR_d$. Moreover $M$ in $\ESP_d$ if and only if $\Soc_U(M) = M_2$ and hence $\dim_k \Rad_U(DM) = \dim_k M_1 = \dim_k (DM)_2$. 
\end{proof}

\noindent For the proof of the following proposition we use the same methods as in \cite[2.5]{Wor1}.
\begin{proposition}\label{HomChar}
Let $1 \leq d < r \in \NN$. Then
\begin{align*}
\ESP_d &= \{ M \in \modd \cK_r \mid \forall U \in \Gr_{d,r}: \Hom(X_U,M) = 0\}, \\
\CSR_d  &= \{ M \in \modd \cK_r \mid \exists c \in \NN_0 \ \forall U \in \Gr_{d,r}: \dim_k \Hom(X_U,M) = c \}.
\end{align*}
\end{proposition}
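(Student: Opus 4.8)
The plan is to derive both equalities from a single dimension formula, namely
\[ \dim_k \Hom(X_U,M) = \dim_k \Soc_U(M) - \dim_k M_2 \qquad \text{for all } U \in \Gr_{d,r}, \]
and then read off the two statements. To obtain it, fix $U \in \Gr_{d,r}$ with basis $T = (u_1,\ldots,u_d)$ and apply the contravariant functor $\Hom(-,M)$ to the defining short exact sequence $0 \to P_1^d \stackrel{\overline{T}}{\to} P_2 \to X_U \to 0$ from Section 1.2. Left exactness identifies $\Hom(X_U,M)$ with the kernel of the induced map $\overline{T}^\ast \colon \Hom(P_2,M) \to \Hom(P_1^d,M)$; the cokernel, which computes $\Ext(X_U,M)$, plays no role here.

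Next I would compute the two outer Hom-spaces and the connecting map. From $P_2 = \cK_r e_1$ and $P_1 = \cK_r e_2$ together with the natural isomorphism $\Hom(\cK_r e_i,M) \cong e_i M$ one gets $\Hom(P_2,M) \cong M_1$ and $\Hom(P_1^d,M) \cong M_2^d$. The heart of the argument is to verify that, under these identifications, $\overline{T}^\ast$ becomes the restriction to $M_1$ of the operator $x^M_T$, that is $m \mapsto (x^M_{u_1}(m),\ldots,x^M_{u_d}(m))$; this is seen by unravelling the definitions of $\overline{T}$ and of $\overline{u}_i \colon e_2 \mapsto x_{u_i}$ and using $x_{u_i} = x_{u_i}e_1$. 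Since each $x^M_{u_i}$ annihilates $M_2$ and sends $M_1$ into $M_2$, one has $\Soc_U(M) = \bigcap_i \ker x^M_{u_i} = M_2 \oplus \ker \overline{T}^\ast$, whence $\dim_k \Hom(X_U,M) = \dim_k \ker \overline{T}^\ast = \dim_k \Soc_U(M) - \dim_k M_2$. I expect this identification of $\overline{T}^\ast$ with the socle operator to be the principal obstacle: it is the one place where both canonical isomorphisms and the idempotent action must be tracked simultaneously, and it is precisely what makes the combinatorial socle data visible homologically.

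With the formula in hand the two characterizations drop out. For $\CSR_d$, the quantity $\dim_k M_2$ is independent of $U$, so $U \mapsto \dim_k \Hom(X_U,M)$ is constant exactly when $U \mapsto \dim_k \Soc_U(M)$ is constant, which is the definition of constant $d$-socle rank. For $\ESP_d$, the inclusion $M_2 \subseteq \Soc_U(M)$ turns the formula into the equivalence $\Hom(X_U,M) = 0 \Longleftrightarrow \Soc_U(M) = M_2$; hence the right-hand set equals $\{M \mid \Soc_U(M) = M_2 \text{ for all } U\}$, which is $\ESP_d$ by Lemma \ref{Radical}(a). To pass from the indecomposable non-simple modules covered by that lemma to an arbitrary $M$, I would use that both classes are closed under finite direct sums and direct summands (since $\Soc_U$ and $\Hom(X_U,-)$ commute with finite direct sums), reducing the verification to the indecomposable case.
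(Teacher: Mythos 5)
Your proposal follows essentially the same route as the paper: apply $\Hom(-,M)$ to the sequence $0 \to P_1^d \stackrel{\overline{T}}{\to} P_2 \to X_U \to 0$, identify $\overline{T}^{\ast}$ with the restriction ${x^M_T}_{|M_1}$ under the canonical isomorphisms $\Hom(P_2,M)\cong M_1$ and $\Hom(P_1^d,M)\cong M_2^d$, and read off both characterizations from the resulting identity $\dim_k\Hom(X_U,M)=\dim_k\Soc_U(M)-\dim_k M_2$. The only difference is cosmetic: you make explicit the reduction to indecomposable non-simple modules needed to invoke Lemma \ref{Radical}, a point the paper leaves implicit.
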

\begin{proof}
Let $U \in \Gr_{d,r}$ with basis $T =(\alpha_1,\ldots,\alpha_d)$.
Consider the short exact sequence $0 \to(P_1)^d \stackrel{\overline{T}}{\to} P_2 \to X_U \to 0$. Application of $\Hom(-,M)$ yields 
\[ 0 \to \Hom(X_U,M) \to \Hom(P_2,M) \stackrel{{\overline{T}}^{\ast}}{\to} \Hom(P^d_1,M) \to \Ext(X_U,M) \to 0.\]
Moreover let 
\[f \colon \Hom(P_2,M) \to M_1; g \to g(e_1)\]
 and \[g \colon \Hom(P^d_1,M) \to M^d_2; h \mapsto (h\circ \iota_1(e_2),\ldots,h\circ \iota_d(e_2))\]
be the natural isomorphisms, where $\iota_i \colon P_1 \to P^d_1$ denotes the embedding into the $i$-th coordinate. Let $\pi_{M^d_2} \colon M^d \to M^d_2$ the natural projection, then $g \circ \overline{T}^\ast = \pi_{M^d_2} \circ{x^M_T}_{|M_1} \circ f$. 
 Hence $\dim_k \ker(\pi_{M^d_2} \circ {x^M_T}_{|M_1} \colon M_1 \to M^d_2) =\dim_k \ker(\overline{T}^\ast) = \dim_k \Hom(X_U,M)$. Now let $c \in \NN_0$. We conclude 
\begin{align*}
  \dim_k \Hom(X_U,M) = c &\Leftrightarrow \dim_k \ker(\pi_{M^d_2} \circ {x^M_T}_{|M_1}) = c 
  \stackrel{\im x^M_T \subseteq M^d_2}{\Leftrightarrow} \dim_k \ker ({x^M_T}_{|M_1}) = c \\
  &\stackrel{M_2 \subseteq \ker(x^M_T)}{\Leftrightarrow} \dim_k \ker ({x^M_T}) = c + \dim_k M_2
    \\
   &\stackrel{\Soc_U(M) = \ker(x^M_T)}{\Leftrightarrow} \dim_k \Soc_{U}(M) =  c + \dim_k M_2.
 \end{align*}  

This finishes the proof for $\CSR_d$. Moreover note that $c = 0$ together with Lemma $\ref{Radical}$ yields
 \begin{align*}
 M \in \ESP_d &\Leftrightarrow \exists W \leq M \ \forall  U \in \Gr_{d,r}: \Soc_{U}(M) = W \\
 &\Leftrightarrow \forall  U \in \Gr_{d,r}: \Soc_{U}(M) = M_2  
 \Leftrightarrow \forall  U \in \Gr_{d,r}: \dim_k \Soc_{U}(M) = 0 + \dim_k M_2 \ \ \\
 &\Leftrightarrow \ \forall  U \in \Gr_{d,r}: \Hom(X_U,M) = 0.
\end{align*}  
\end{proof}
  
\noindent Since $\tau \circ D = D \circ \tau^{-1}$ the next result follows from the Auslander-Reiten formula and $\ref{Dual}$.
 
 \begin{proposition}
 Let $1 \leq d < r \in \NN$. Then
\begin{align*}
\ERP_d  &= \{ M \in \modd \cK_r \mid \forall U \in \Gr_{d,r}: \Ext(D \tau X_U,M) = 0  \} \ \text{and}\\
\CRR_d &= \{ M \in \modd \cK_r \mid \exists c \in \NN_0 \ \forall U \in \Gr_{d,r}: \dim_k \Ext(D \tau X_U,M) = c  \}.
\end{align*}
  \end{proposition}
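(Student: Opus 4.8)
The plan is to deduce both identities from the socle versions already established in Proposition \ref{HomChar}, transporting them across the duality $D$ with the help of the Auslander--Reiten formula. The bridge between the radical side and the socle side is Lemma \ref{Dual}: since $D$ is a duality with $D^2 \cong \id$, applying part $(b)$ to $DM$ gives $M \in \ERP_d \Leftrightarrow DM \in \ESP_d$, and applying part $(a)$ to $DM$ gives $M \in \CRR_d \Leftrightarrow DM \in \CSR_d$. Hence it suffices to translate the conditions on $\Hom(X_U,DM)$ furnished by Proposition \ref{HomChar} (applied to $DM$) into conditions on $\Ext(D\tau X_U,M)$.

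The core of the argument is a single dimension identity, proved for each fixed $U \in \Gr_{d,r}$. First I would use the contravariant duality to rewrite $\dim_k \Hom(X_U,DM) = \dim_k \Hom(M,DX_U)$. Next, applying the Auslander--Reiten formula $\Ext(X,Y) \cong D\Hom(Y,\tau X)$ with $X = D\tau X_U$ and $Y = M$ yields $\Ext(D\tau X_U,M) \cong D\Hom(M,\tau(D\tau X_U))$. At this point the relation $\tau \circ D = D \circ \tau^{-1}$ collapses the translate: because $X_U$ is regular, hence non-projective, one has $\tau(D\tau X_U) = D\tau^{-1}\tau X_U = DX_U$. Chaining these isomorphisms produces the key equality
\[ \dim_k \Ext(D\tau X_U,M) = \dim_k \Hom(M,DX_U) = \dim_k \Hom(X_U,DM). \]

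With this equality the two claims follow immediately. For the first, $M \in \ERP_d \Leftrightarrow DM \in \ESP_d \Leftrightarrow \Hom(X_U,DM) = 0$ for all $U$ (Proposition \ref{HomChar}) $\Leftrightarrow \Ext(D\tau X_U,M) = 0$ for all $U$, the last equivalence because a $\Hom$- or $\Ext$-space vanishes exactly when its dimension does. For the second, $M \in \CRR_d \Leftrightarrow DM \in \CSR_d \Leftrightarrow$ there is $c \in \NN_0$ with $\dim_k \Hom(X_U,DM) = c$ for all $U$ $\Leftrightarrow$ there is $c \in \NN_0$ with $\dim_k \Ext(D\tau X_U,M) = c$ for all $U$.

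The one place demanding care---though not a real obstacle---is the bookkeeping of how $\tau$ and $D$ compose on the regular module $X_U$; one must check that $\tau(D\tau X_U)$ simplifies to $DX_U$ and not to another translate, which is exactly where the hypothesis that $X_U$ is regular (so that $\tau^{-1}\tau X_U = X_U$) enters. Beyond this, the proof is a routine transport of the socle statements through $D$, requiring no input other than Lemma \ref{Dual}, Proposition \ref{HomChar}, and the Auslander--Reiten formula.
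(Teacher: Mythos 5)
Your argument is correct and is precisely the one the paper intends: the paper justifies this proposition in one line by citing the relation $\tau\circ D = D\circ\tau^{-1}$, the Auslander--Reiten formula, and Lemma \ref{Dual}, and your write-up fills in exactly those steps, with the key identity $\dim_k\Ext(D\tau X_U,M)=\dim_k\Hom(M,DX_U)=\dim_k\Hom(X_U,DM)$ worked out correctly (including the point that $\tau^{-1}\tau X_U=X_U$ because $X_U$ is regular, hence non-projective). Nothing to add.
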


\begin{Remark} Note that for $d = 1$ we have $U = \langle \alpha \rangle_k$ with $\alpha \in k^r \setminus \{0\}$, $X_U \cong X_\alpha$ and $D  \tau X_U \cong X_U$ \cite[3.1]{Wor1}. However, this identity holds if and only if $d = 1$. It follows immediatly from $\ref{HomChar}$ and $\ref{elementary}$ that for $1 \leq d < r-1$ and $V \in \Gr_{d,r}$ the module $X_V$ is in $\CSR_{d+1} \setminus \CSR_{d}$. If not stated otherwise, we assume from now on that $r \geq 3$.
\end{Remark}

\noindent In view of $\ref{Cone2}$, $\ref{elementary}$ and the definitions of $\ESP_d$ and $\ERP_d$ we immediately get:

\begin{proposition}\label{HomTau} Let $1 \leq d < r$ and $\cC$ a regular component of $\cK_r$.
\begin{enumerate}[topsep=0em, itemsep= -0em, parsep = 0 em, label=$(\alph*)$]	
\item $\ESP_1 \subseteq \ESP_2 \subseteq \ldots \subseteq \ESP_{r-1}$ and $\ERP_1 \subseteq \ERP_2 \subseteq \ldots \subseteq \ERP_{r-1}$.
\item $\ESP_d$ is closed under extensions, submodules and $\tau$. Moreover $\ESP_d$ contains all preprojective modules and $\ESP_d \cap \cC$ forms a non-empty cone in $\cC$.
\item $\ERP_d$ is closed under extensions, images and $\tau$. Moreover $\ERP_d$ contains all preinjective modules and $\ERP_d \cap \cC$ forms a non-empty cone in $\cC$.
\end{enumerate}
\end{proposition}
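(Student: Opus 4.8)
The plan is to deduce all three parts formally from the homological descriptions already in place, so that each becomes a short argument. Throughout I fix the family $\cE_d := \{ X_U \mid U \in \Gr_{d,r}\}$. By construction every $X_U$ has dimension vector $(1,r-d)$, so $\cE_d$ has bounded (indeed constant) dimension, and by Lemma \ref{elementary} (applied with $c = r-d$, so that $1\le c<r$) each $X_U$ is elementary, using $r\ge 3$. Proposition \ref{HomChar} identifies $\ESP_d$ with $\cF(\cE_d)=\{M\mid \forall E\in\cE_d:\Hom(E,M)=0\}$, while the $\Ext$-description of $\ERP_d$ identifies it with $\cT(\cE_d')$ for $\cE_d':=\{D\tau X_U\mid U\in\Gr_{d,r}\}$; the latter is again a bounded family of elementary modules, since $D$ and $\tau$ preserve elementariness and keep dimensions bounded.

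For part $(a)$ I would use the epimorphisms from Proposition \ref{dim}(c). Fix $V\in\Gr_{d+1,r}$; since $X_V$ has dimension vector $(1,r-d-1)$, i.e. $[X_V]\in Q^{d+1}$, Proposition \ref{dim}(c) provides some $U\in\Gr_{d,r}$ and an epimorphism $\pi\colon X_U\twoheadrightarrow X_V$. Applying $\Hom(-,M)$ to $\pi$ yields a monomorphism $\pi^\ast\colon\Hom(X_V,M)\hookrightarrow\Hom(X_U,M)$, because precomposition with an epimorphism is injective. Hence if $M\in\ESP_d$, so that $\Hom(X_U,M)=0$ for all $U\in\Gr_{d,r}$, we get $\Hom(X_V,M)=0$; as $V$ was arbitrary, $M\in\ESP_{d+1}$. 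This gives $\ESP_d\subseteq\ESP_{d+1}$, and induction yields the first chain. Applying $D$ together with Lemma \ref{Dual}(b) transports this to $\ERP_1\subseteq\cdots\subseteq\ERP_{r-1}$.

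Part $(b)$ is then immediate: since $\ESP_d=\cF(\cE_d)$ with $\cE_d$ a bounded family of elementary modules, Proposition \ref{Cone2} applies verbatim and gives that $\ESP_d$ is closed under extensions, submodules and $\tau^{-1}$, contains all preprojective modules, and meets every regular component $\cC$ in a non-empty cone, namely the successors of a unique quasi-simple module. Part $(c)$ follows by duality: by Lemma \ref{Dual}(b) we have $M\in\ERP_d\iff DM\in\ESP_d$, and the duality $D$ converts submodules into images, preprojectives into preinjectives, and (via $\tau D=D\tau^{-1}$) $\tau^{-1}$-closure into $\tau$-closure, so the assertions of $(b)$ transform into exactly those of $(c)$. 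Equivalently, one applies Proposition \ref{Cone1} directly to $\cE_d'$ using the $\Ext$-characterisation of $\ERP_d$.

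The arguments are formal, so there is no deep obstacle; the two points requiring care are the following. First, one must verify the hypotheses of Propositions \ref{Cone1} and \ref{Cone2}, namely that $\cE_d$ (resp. $\cE_d'$) really is a family of elementary modules of bounded dimension — this is where $r\ge 3$ and Lemma \ref{elementary} enter. Second, in $(a)$ one must get the inclusion in the correct direction, and the essential input is the \emph{surjectivity} of the map $X_U\twoheadrightarrow X_V$ from Proposition \ref{dim}(c): it is surjectivity (not merely nonzeroness) that makes $\pi^\ast$ injective and thereby forces the vanishing of $\Hom(X_V,M)$ from that of $\Hom(X_U,M)$.
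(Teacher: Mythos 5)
Your argument is correct and is essentially the derivation the paper intends: the paper offers no written proof, saying only that the proposition follows ``in view of \ref{Cone2}, \ref{elementary} and the definitions'', and you have filled in exactly those steps — $\ESP_d=\mathfrak{X}_{d,r}^{\perp}=\cF(\cE_d)$ via Proposition \ref{HomChar}, elementariness and bounded dimension of the $X_U$ (and of $D\tau X_U$) via Lemma \ref{elementary}, and then Propositions \ref{Cone2} and \ref{Cone1}. Two small remarks. For part $(a)$ you route through the epimorphisms of Proposition \ref{dim}(c); this works, but the inclusion is also immediate from the definitions without any homological input, since for $U\subseteq V$ one has $\Soc_V(M)\subseteq\Soc_U(M)$ and hence $\Soc_V(M)=\bigcap_{U\le V,\,\dim U=d}\Soc_U(M)$, so constancy in degree $d$ forces constancy in degree $d+1$ (and dually for $\Rad$); this is presumably what ``the definitions'' refers to. Second, note that what you actually prove in $(b)$ — and what Proposition \ref{Cone2} delivers — is closure of $\ESP_d$ under $\tau^{-1}$, not under $\tau$ as the statement is printed; the printed $\tau$ must be a typo, since $\ESP_d\cap\cC$ is the cone of \emph{successors} of a quasi-simple module (so cannot be $\tau$-stable), and the paper itself later invokes $\tau^{-1}$-closure of $\ESP_{r-1}\cap\cC$ in the proofs of Proposition \ref{Summary} and Lemma \ref{Finalall}. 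So your proof establishes the corrected statement; it would be worth flagging that discrepancy explicitly rather than passing over it.
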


\begin{Definition}
For $1 \leq i < r$ we set $\Delta_i := \ESP_i \setminus \ESP_{i-1}$ and $\nabla_i := \ERP_i \setminus \ESP_{i-1}$, where $\ESP_0 = \emptyset = \ERP_0$.
\end{Definition}
\noindent The next result suggests that for each regular component $\cC$ and $1 < i < r$ only a small part of vertices in $\cC$ corresponds to modules in $\Delta_i$. Nonetheless we will see in Section $5$ that for $1 \leq i < r$ the categories $\Delta_i$ and $\nabla_i$ are of wild type. 
\begin{proposition}\label{Summary}
Let $r \geq 3$, $\cC$ be a regular component and $M_i,W_i$ $(1 \leq i < r)$ in $\cC$ the uniquely determined quasi-simple modules such that $\ESP_i \cap \cC = (M_i \to)$ and $\ERP_i \cap \cC = (\to W_i)$. 
\begin{enumerate}[topsep=0em, itemsep= -0em, parsep = 0 em, label=$(\alph*)$]
\item There exists at most one number  $1 < m(\cC) < r$ such that $\Delta_{m(\cC)} \cap \cC$ is non-empty. If such a number exists then $\Delta_{m(\cC)} \cap \cC = \{ M_{m(\cC)}[l] \mid l \geq 1 \}$.
\item There exists at most one number  $1 < w(\cC) < r$ such that $\nabla_{w(\cC)} \cap \cC$ is non-empty. If such a number exists then $\nabla_{w(\cC)} \cap \cC = \{ (l)W_{w(\cC)} \mid l \geq 1 \}$.
\end{enumerate} 
\end{proposition}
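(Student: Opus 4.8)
\emph{Proof proposal.} The plan is to reduce both statements to a single quantitative fact, namely that the quasi-simple apices $M_1,\dots,M_{r-1}$ all lie within one $\tau$-step of each other, and then to extract that fact from an explicit dimension-vector computation combined with Lemma~\ref{CompElementary}. Everything else is the $\ZZ A_\infty$-combinatorics of cones, which is already prepared by Proposition~\ref{HomTau}.

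First I would dispose of $(b)$ by duality. Since $D$ is a duality with $D\tau=\tau^{-1}D$, it carries each regular component $\cC$ to a regular component $D\cC$, turns cones of successors into cones of predecessors, and by Lemma~\ref{Dual}(b) interchanges $\ESP_i$ and $\ERP_i$. Hence $W_i(D\cC)=DM_i(\cC)$ and $\nabla_i\cap D\cC=D(\Delta_i\cap\cC)$, so that $(b)$ for $D\cC$ is exactly the $D$-image of $(a)$ for $\cC$. As $D$ is, up to isomorphism, an involution on regular components, it suffices to prove $(a)$.

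For $(a)$ I would fix $\cC$ and use that by Proposition~\ref{HomTau} one has $\ESP_i\cap\cC=(M_i\to)$ together with the inclusions $\ESP_1\cap\cC\subseteq\cdots\subseteq\ESP_{r-1}\cap\cC$. In the standard coordinates on $\ZZ A_\infty$ the cone of successors of a quasi-simple module is precisely the set of modules whose quasi-socle lies weakly to the right of it; thus the inclusions force the quasi-socles of $M_1,\dots,M_{r-1}$ to move weakly to the left, i.e. $M_d\in\{\tau^s M_{d-1}:s\ge 0\}$. Consequently $\Delta_m\cap\cC=(M_m\to)\setminus(M_{m-1}\to)$ is non-empty iff $M_m\neq M_{m-1}$, and when $M_m=\tau M_{m-1}$ it equals exactly the ray $\{M_m[l]:l\ge 1\}$. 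Both assertions of $(a)$ therefore follow at once from the global bound $M_{r-1}\in\{M_1,\tau M_1\}$; the nesting already gives that $M_{r-1}$ lies weakly to the left of $M_1$, so the content is that the total $\tau$-distance from $M_1$ to $M_{r-1}$ is at most one.

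The heart of the argument, and the step I expect to be the main obstacle, is this bound, which I would prove by showing $\tau^2M_1\notin\ESP_{r-1}$. Since $M_1$ is the apex of $\ESP_1\cap\cC$, its predecessor $\tau M_1$ is not in $\ESP_1$, so by Proposition~\ref{HomChar} there is $\beta$ with a nonzero map $f\colon X_\beta\to\tau M_1$; applying the equivalence $\tau$ yields a nonzero $\tau f\colon\tau X_\beta\to\tau^2 M_1$. The Coxeter matrix gives $\dimu\tau X_\beta=\Phi(1,r-1)=(r-1,1)$, and any module of dimension vector $(r-1,1)$ fails to lie in $\ESP_{r-1}$: the assignment $u\mapsto x^{\tau X_\beta}_u|_{(\tau X_\beta)_1}$ is a linear map $k^r\to((\tau X_\beta)_1)^\ast\cong k^{r-1}$, hence has a nonzero kernel vector $w$, and any hyperplane $U\ni w$ satisfies $\Soc_U(\tau X_\beta)\supsetneq(\tau X_\beta)_2$. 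Picking such $U\in\Gr_{r-1,r}$, Proposition~\ref{HomChar} provides a nonzero $g\colon X_U\to\tau X_\beta$; as $\tau X_\beta$ is elementary by Lemma~\ref{elementary} while $X_U$ and $\tau^2 M_1$ are regular, Lemma~\ref{CompElementary} forces $(\tau f)\circ g\neq 0$. Thus $\Hom(X_U,\tau^2 M_1)\neq 0$, so $\tau^2M_1\notin\ESP_{r-1}=(M_{r-1}\to)$, placing $M_{r-1}$ strictly to the right of $\tau^2 M_1$ and yielding the bound. The only delicate points are keeping the orientation conventions for the cones straight and verifying that $\tau$ acts as the stated equivalence on the regular modules involved; the genuinely useful idea is to chain the two nonzero maps through the elementary module $\tau X_\beta$ via Lemma~\ref{CompElementary}, which is what converts the dimension count into the required non-vanishing.
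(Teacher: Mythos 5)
Your proposal is correct and follows essentially the same route as the paper: both reduce everything to showing $\tau^2 M_1\notin\ESP_{r-1}$ and obtain this by chaining a nonzero map $X_U\to\tau X_\beta$ ($U\in\Gr_{r-1,r}$) with $\tau f\colon\tau X_\beta\to\tau^2M_1$ through the elementary module $\tau X_\beta$ via Lemma~\ref{CompElementary}. The only (harmless) deviation is in producing the first map: the paper dualizes a $(1,1)$-dimensional quotient of $X_\beta$ using $DX_\beta\cong\tau X_\beta$, whereas you use the computation $\dimu\tau X_\beta=(r-1,1)$ and a kernel/codimension count together with Proposition~\ref{HomChar}.
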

\begin{proof}
$(a)$ Let $M$ be in $\cC$ and assume $M \notin \ESP_{1}$. Then there exists $\alpha \in k^{r}\setminus \{0\}$ with $\Hom(X_U,M) \neq 0$ for $U = \langle\alpha \rangle_k$. Hence we find a non-zero map $f \colon \tau X_U \to \tau M$. Consider an exact sequence $0 \to {P^{r-2}_1} \to  X_U \to N \to 0$. Then $\dimu N = (1,1)$ and by $\ref{LemmaQuotient}$ $N$ is indecomposable. By $\ref{dim}$, there exists $V \in \Gr_{r-1,r}$ with $X_V \cong DN$. Since $DX_U = \tau X_U$ \cite[3.1]{Wor1} we get a non-zero morphism $g \colon X_V \to \tau X_U$ and by $\ref{CompElementary}$ a non-zero morphism 
\[ X_V \stackrel{g}{\to} \tau X_U \stackrel{f}{\to} \tau M.\]
Therefore $\tau M \notin \ESP_{r-1}$ by $\ref{HomChar}$. Now assume that $M_i \neq M_{j}$ for some $i$ and $j$. Then in particular $M_1 \neq M_{r-1}$. Hence $M_{r-1} = \tau^l M_1$ for some $l \geq 1$. By definition we have $M:= \tau M_{1} \notin \ESP_1$ and the above considerations yield $\tau(\tau M_1) = \tau M \notin \ESP_{r-1}$. Therefore $1 \leq l < 2$ since $\ESP_{r-1} \cap \cC$ is closed under $\tau^{-1}$. Therefore $M_{r-1} = \tau M_1$.\\
$(b)$ This follows by duality.
\end{proof}

We state two more results that follow from $\ref{HomChar}$ and will be needed later on. The first one is a generalization of \cite[3.5]{Wor1} and follows with the same arguments.

\begin{Lemma}\label{Allquasi}
Let $0 \to A \to B \to C \to 0$ be an almost split sequence such that two modules of the sequence are of constant $d$-socle rank. Then the third module also has constant $d$-socle rank.
\end{Lemma}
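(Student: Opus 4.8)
The plan is to use the homological characterization of $\CSR_d$ from Proposition~\ref{HomChar}, namely that $M \in \CSR_d$ if and only if the function $U \mapsto \dim_k \Hom(X_U, M)$ is constant on $\Gr_{d,r}$. The key observation is that for a fixed test-module $X_U$, applying $\Hom(X_U,-)$ to the almost split sequence $0 \to A \to B \to C \to 0$ yields a long exact sequence, and since $\cK_r$ is hereditary we have $\Ext^2 = 0$, so this reduces to a six-term exact sequence
\[ 0 \to \Hom(X_U,A) \to \Hom(X_U,B) \to \Hom(X_U,C) \to \Ext(X_U,A) \to \Ext(X_U,B) \to \Ext(X_U,C) \to 0. \]
Taking alternating sums of dimensions, the quantity $\dim_k\Hom(X_U,A) - \dim_k\Hom(X_U,B) + \dim_k\Hom(X_U,C)$ equals the analogous alternating sum of $\Ext$-dimensions, and both sides equal the Euler form $\langle \dimu X_U, \dimu A\rangle - \langle \dimu X_U,\dimu B\rangle + \langle \dimu X_U,\dimu C\rangle$. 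Since all the $X_U$ share the same dimension vector $(1,r-d)$ by the Remark following the definition of $X_U$, and since $\dimu A + \dimu C = \dimu B$ by additivity of $\dimu$ on short exact sequences, this Euler-form combination vanishes identically, independently of $U$.

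First I would therefore record that $h(U) := \dim_k\Hom(X_U,A) - \dim_k\Hom(X_U,B) + \dim_k\Hom(X_U,C) = 0$ for every $U \in \Gr_{d,r}$. Next, writing $a(U), b(U), c(U)$ for the three Hom-dimensions, I want the crucial rigidity input: for an almost split sequence $0 \to A \to B \to C \to 0$, the map $\Hom(X_U, B) \to \Hom(X_U, C)$ is surjective unless $X_U$ is a direct summand of $C$ (equivalently, the sequence is the almost split sequence ending in $X_U$). This is the standard property of almost split sequences — every morphism $X_U \to C$ that is not a split epimorphism factors through $B$. Since $X_U$ is regular of quasi-length one and $C$ ranges over indecomposables in a fixed regular component, this failure of surjectivity can occur for at most finitely many, in fact essentially one, exceptional position; away from that the connecting map $\Hom(X_U,C) \to \Ext(X_U,A)$ is zero, giving the relation $c(U) = b(U) - a(U)$ exactly, i.e. the Hom-sequence is short exact on the left.

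Granting left-exactness of the Hom-sequence for all $U$, the three functions satisfy $b(U) = a(U) + c(U)$ pointwise. Hence if any two of $a, b, c$ are constant in $U$ (which is the hypothesis that two of the three modules lie in $\CSR_d$), the third is automatically constant, and by Proposition~\ref{HomChar} the corresponding module lies in $\CSR_d$. This is the core of the argument; the bookkeeping is to verify that the almost split sequence in question is genuinely left-exact after applying $\Hom(X_U,-)$, which I would get from the fact that $X_U$ is quasi-simple elementary and $A \to B$ is a monomorphism between regular modules, together with Lemma~\ref{CompElementary} to control factorizations.

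The main obstacle I anticipate is handling the exceptional case where the almost split sequence is precisely the one terminating in some $X_U$ (or in $\tau X_U$), so that the expected left-exactness or surjectivity fails and a correction term of dimension one appears. I would resolve this by noting that this can happen for at most one of the three modules in the sequence and for a measure-zero set of subspaces $U$, and then argue that the constancy hypothesis on two of the modules still forces the dimension function of the third to be constant: concretely, since $\Gr_{d,r}$ is irreducible and the function $U \mapsto \dim_k\Hom(X_U,M)$ is upper semicontinuous, a function that is constant off a proper closed subset and whose jumps are controlled by the additive Euler relation must in fact be globally constant. The cleanest route, and the one I expect mirrors the cited proof of \cite[3.5]{Wor1}, is to avoid the exceptional-case analysis entirely by invoking the closure of $\ESP_d$-type cones under the AR-translate (Proposition~\ref{HomTau}) so that the three modules of an almost split sequence have comparable positions relative to the quasi-simple boundary $M_d$, forcing the Hom-dimensions to line up.
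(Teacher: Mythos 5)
Your core mechanism is the right one and is essentially the argument behind the cited source \cite[3.5]{Wor1}: apply $\Hom(X_U,-)$ to the almost split sequence and use the defining lifting property to get $\dim_k\Hom(X_U,C)=\dim_k\Hom(X_U,B)-\dim_k\Hom(X_U,A)$ whenever $X_U\not\cong C$, so that constancy of two of the three functions forces constancy of the third away from at most one exceptional point $U_0\in\Gr_{d,r}$ (the unique $U_0$ with $C\cong X_{U_0}$, if it exists, by Proposition~\ref{dim}). Two things in your write-up do not hold up, though. First, the alternating sum $h(U)=a(U)-b(U)+c(U)$ does \emph{not} equal the Euler-form combination $\langle\dimu X_U,\dimu A-\dimu B+\dimu C\rangle$; only the difference between the alternating Hom-sum and the alternating Ext-sum does. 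So your ``record that $h(U)=0$ for every $U$'' is unjustified there, and indeed false at $U_0$, where $h(U_0)=1$. You recover the correct identity from the almost split property afterwards, so this is repairable, but as written the first step is wrong.

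The genuine gap is the exceptional case. Upper semicontinuity of $U\mapsto\dim_k\Hom(X_U,M)$ on the irreducible variety $\Gr_{d,r}$ does \emph{not} force a function that is constant on a dense open subset to be globally constant: semicontinuity permits the value to jump \emph{up} on the closed complement, and this actually happens here --- e.g.\ $\dim_k\Hom(X_U,\tau X_{U_0})=\dim_k\Ext(X_{U_0},X_U)=\dim_k\Hom(X_{U_0},X_U)+d(r-d)-1$ equals $d(r-d)-1$ for $U\neq U_0$ but $d(r-d)$ at $U=U_0$. Likewise the appeal to Proposition~\ref{HomTau} is about the $\tau$-closure of the cones for $\ESP_d$, which says nothing about $\CSR_d$. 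What is actually needed is to show that the exceptional configuration is vacuous: if $C\cong X_{U_0}$, then $C\notin\CSR_d$ (the Remark after Proposition~\ref{HomChar}, extended to $d=r-1$ by a one-line computation), $A=\tau X_{U_0}\notin\CSR_d$ by the displayed non-constancy above, and consequently $B\notin\CSR_d$ by the identity $b(U)=a(U)+c(U)-\delta_{U,U_0}$; hence no two modules of that sequence can have constant $d$-socle rank and the hypothesis of the Lemma rules this case out. Without this verification your argument, as it stands, would even ``prove'' the false statement that the third term of the AR-sequence ending in $X_{U_0}$ has constant $d$-socle rank whenever the other two do.
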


\begin{Definition}
Let $r \geq 2$, $1 \leq d < r$ and $\mathfrak{X}_{d,r} := \{ X_U \mid U  \in \Gr_{d,r} \}$. Let $\mathfrak{X}^\perp_{d,r}$ be the right orthogonal category $\mathfrak{X}^\perp_{d,r} = \{M \in \modd \cK_r \mid \forall U \in \Gr_{d,r} \colon \Hom(X_U,M) = 0\}$
and  $^\perp\mathfrak{X}_{d,r}$ be the left orthogonal category
$^\perp\mathfrak{X}_{d,r} := \{M \in \modd \cK_r \mid \forall U \in \Gr_{d,r} \colon \Hom(M,X_U) = 0\}$. Then we set $\overline{\mathfrak{X}}_{d,r} := \mathfrak{X}^\perp_{d,r} \cap {^\perp\mathfrak{X}_{d,r}}$.
\end{Definition}

\noindent Note that every module in $\overline{\mathfrak{X}}_{d,r}$ is regular by $\ref{HomTau}$.

\begin{Lemma}\label{Finalall}
Let $r \geq 3$, $1 \leq d < r$ and $M$ be quasi-simple regular in a component $\cC$ such that $M \in \overline{\mathfrak{X}}_{d,r}$. Then every module in $\cC$ has constant $d$-socle rank.
\end{Lemma}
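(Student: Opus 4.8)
The plan is to show that if $M$ is quasi-simple in $\cC$ and lies in $\overline{\mathfrak{X}}_{d,r} = \mathfrak{X}^\perp_{d,r} \cap {^\perp}\mathfrak{X}_{d,r}$, then $\dim_k \Hom(X_U, X)$ is independent of $U \in \Gr_{d,r}$ for every $X \in \cC$; by the homological characterization in $\ref{HomChar}$ this is exactly the statement that every $X \in \cC$ has constant $d$-socle rank. I would organize the proof around the ray/cone structure of $\cC$ and the two orthogonality conditions, using them to pin down $\Hom(X_U, -)$ on the whole component.

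First I would observe what $M \in \overline{\mathfrak{X}}_{d,r}$ buys us. The condition $M \in \mathfrak{X}^\perp_{d,r}$ says $\Hom(X_U, M) = 0$ for all $U$, which by $\ref{HomChar}$ (the $c=0$ case) means $M \in \ESP_d$, i.e.\ $\dim_k \Hom(X_U, M) = 0$ for all $U$. The condition $M \in {^\perp}\mathfrak{X}_{d,r}$ says $\Hom(M, X_U) = 0$ for all $U$. Now fix $U$ and consider the short exact sequence $0 \to P_1^d \to P_2 \to X_U \to 0$ used throughout; applying $\Hom(-, X)$ and tracking the connecting maps, $\dim_k \Hom(X_U, X)$ is controlled by $\dim_k X_1$, $\dim_k X_2$ and the rank of $x^X_T$ restricted to $X_1$. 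The key point is that as $X$ ranges over the ray $(M \to)$ of successors of $M$, the maps $\Hom(X_U, M) \to \Hom(X_U, M[l])$ induced by the irreducible monomorphisms are all zero in the base but the $\Ext$ term grows predictably.

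**The main mechanism.**

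The heart of the argument is to propagate the equal-socle property up the whole component from the single quasi-simple $M$. I would argue that $\ESP_d \cap \cC = (M_d \to)$ is a cone (by $\ref{HomTau}(b)$) whose quasi-simple vertex $M_d$ satisfies $M_d = M$: since $M \in \ESP_d$ and $M$ is quasi-simple, $M$ lies on the bottom layer of the cone, forcing $M_d = M$, so $\ESP_d \cap \cC = (M \to)$ consists of all successors of $M$. Dually, the condition $\Hom(M, X_U) = 0$ places $M$ in $\ERP_d \cap \cC = (\to W_d)$ by the dual of $\ref{HomChar}$, and quasi-simplicity forces $W_d = M$, so $\ERP_d \cap \cC = (\to M)$ consists of all predecessors of $M$. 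Because $\cC$ has type $\ZZ A_\infty$, every vertex is either a successor of $M$ (hence in $\ESP_d$, with $\dim_k \Hom(X_U, X) = 0$ for all $U$), or a predecessor of $M$ (hence in $\ERP_d$), or lies on the ray through $M$ itself. On the predecessor cone the dual computation via $\Ext(D\tau X_U, X)$ and the Auslander--Reiten formula gives that $\dim_k \Hom(X_U, X)$ again takes a $U$-independent value. The remaining vertices are handled by $\ref{Allquasi}$: knowing two of three terms of each almost split sequence have constant $d$-socle rank forces the third to as well, and since $M \in \overline{\mathfrak{X}}_{d,r}$ sits at the junction of the two cones, an induction on quasi-length starting from $M$ sweeps out all of $\cC$.

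**The main obstacle.**

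The hard part will be the vertices that are neither predecessors nor successors of $M$ — that is, controlling $\dim_k \Hom(X_U, X)$ for $X$ off both cones, where neither $\ESP_d$ nor $\ERP_d$ gives vanishing directly. Here I expect the cleanest route is to avoid a direct computation and instead run the inductive argument via $\ref{Allquasi}$: start from the fact that $M$ and its two adjacent quasi-simples (which lie in the respective cones) have constant $d$-socle rank, and propagate outward along almost split sequences. One must check that the induction genuinely covers $\cC$ and that at each step at least two of the three terms are already known to have constant $d$-socle rank; the geometry of $\ZZ A_\infty$ together with the fact that the two cones cover all but a single ray makes this bookkeeping the delicate point, but it is the place where the hypothesis $M \in \overline{\mathfrak{X}}_{d,r}$ — as opposed to merely $M \in \ESP_d$ — is essential, since it is precisely the two-sided orthogonality that seeds the induction from both directions.
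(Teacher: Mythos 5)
Your overall skeleton (first settle all quasi-simple modules of $\cC$, then propagate to the whole component with Lemma \ref{Allquasi}) matches the paper's, and your treatment of the modules $\tau^{-q}M$, $q\ge 0$, is correct: $\Hom(X_U,M)=0$ for all $U$ together with closure of $\{N\mid \Hom(X_U,N)=0\}\cap\cC$ under $\tau^{-1}$ puts these in $\ESP_d\subseteq\CSR_d$. The gap is on the other side, for $\tau^{l}M$ with $l\ge 1$. You try to place the predecessors of $M$ in $\ERP_d$, but $\Hom(M,X_U)=0$ for all $U$ does not imply $M\in\ERP_d$: by the paper's homological characterization, $\ERP_d$ is detected by vanishing of $\Ext(D\tau X_U,-)$, and $D\tau X_U\not\cong X_U$ once $d>1$ (the paper explicitly remarks this identity holds only for $d=1$); moreover, even membership in $\ERP_d$ would control the \emph{radical} rank via the modules $D\tau X_U$, not the quantity $\dim_k\Hom(X_U,X)$ that governs socle rank. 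Your fallback geometric claim is also false: in a $\ZZ A_\infty$ component the successor cone of $M$ and the predecessor cone of $M$ miss an infinite upward-opening wedge, not a single ray, so an induction seeded only from those two cones cannot sweep out $\cC$ by almost split sequences.

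What is actually needed for $l\ge 1$ is an Euler--Ringel form argument, and this is the one idea your proposal is missing. From $\Hom(M,X_V)=0$ the Auslander--Reiten formula gives $\Ext(X_V,\tau M)=0$, and the set $\{N\mid\Ext(X_V,N)=0\}\cap\cC$ is closed under $\tau$, so $\Ext(X_V,\tau^{l}M)=0$ for all $l\ge 1$ and all $V\in\Gr_{d,r}$. Then $\dim_k\Hom(X_V,\tau^{l}M)=\langle\dimu X_V,\dimu\tau^{l}M\rangle=\langle(1,r-d),\dimu\tau^{l}M\rangle$ is independent of $V$ because every $X_V$ has dimension vector $(1,r-d)$; by Proposition \ref{HomChar} this says precisely that $\tau^{l}M\in\CSR_d$ (note these modules need not lie in $\ESP_d$, i.e.\ the common value of $\dim_k\Hom(X_V,\tau^l M)$ need not be $0$, which is why the two-sided hypothesis $M\in\overline{\mathfrak{X}}_{d,r}$ is used in this asymmetric way). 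Once all quasi-simples are settled, the induction on quasi-length via \ref{Allquasi} closes up as you describe. So the proposal has the right frame but omits the computation that makes the $\tau$-direction work.
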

\begin{proof}
Let $V \in \Gr_{d,r}$. We have shown in $\ref{HomTau}$ that the set $\{ N \mid \Hom(X_V,N) = 0\}\cap \cC$ $($resp. $\{ N \mid \Ext(X_V,N) = 0\} \cap \cC)$ is closed under $\tau^{-1}$ $($resp. $\tau)$. Since $0 = \dim_k \Hom(M,X_V) = \dim_k \Ext(X_V,\tau M)$ we have $\Ext(X_V,\tau^l M) = 0$  for $l \geq 1$.  The Euler-Ringel form yields
\[ 0 =  \dim_k \Ext(X_V,\tau^l M) = -\langle \dimu X_V,\dimu \tau^l M\rangle + \dimu \Hom(X_V,\tau^l M).\] 
Since $\langle \dimu X_V,\dimu \tau^l M\rangle = \langle (1,r-d),\dimu M \rangle$ is independent of $V$, $\tau^l M$ has constant $d$-socle rank. On the other hand $\Hom(X_V,M) = 0$ implies that $\tau^{-q} M$ has constant $d$-socle rank for all $q \geq 0$. It follows that each quasi-simple module in $\cC$ has constant $d$-socle rank. Now apply $\ref{Allquasi}$.
\end{proof}

\section{Process of Simplification and Applications}

\subsection{Representation type}
Denote by $\Lambda := kQ$ the path algebra of a connected, wild quiver $Q$. We stick to the notation introduced in \cite{Ker2}. Recall that a module $M$ is called brick if $\End(M) = k$ and $M,N$ are called orthogonal if $\Hom(M,N) = 0 = \Hom(N,M)$.

\begin{Definition}
Let $\cX$ be a class of pairwise orthogonal bricks in $\modd \Lambda$. The full subcategory $\cE(\cX)$ is by definition the class of all modules $Y$ in $\modd \Lambda$ with an $\cX$-filtration, that is, a chain 
\[ 0 = Y_0 \subset Y_1 \subset \ldots \subset Y_{n-1} \subset Y_n = Y \]
with $Y_i/Y_{i-1} \in \cX$ for all $1 \leq i \leq n$.
\end{Definition}
 In \cite[1.]{Ri4} the author shows that $\cE(\cX)$ is an exact abelian subcategory of $\modd \Lambda$, closed under extensions, and $\cX$ is the class of all simple modules in $\cE(\cX)$. In particular, a module $M$ in $\cE(\cX)$ is indecomposable if and only if it is indecomposable in $\modd \cK_r$.

\begin{proposition}
\label{Simplification}
Let $r \geq 3$ and $\cX \subseteq \modd \cK_r$ be a class of pairwise orthogonal bricks with self-extensions $($and therefore regular$)$.
\begin{enumerate}[topsep=0em, itemsep= -0em, parsep = 0 em, label=$(\alph*)$]	
\item Every module in $\cE(\cX)$ is regular.
\item Every regular component $\cC$ contains at most one module of $\cE(\cX)$.
\item Every indecomposable module $N \in \cE(\cX)$ is quasi-simple in $\modd \cK_r$.
\item $\cE(\cX)$ is a wild subcategory of $\modd \cK_r$
\end{enumerate}
\end{proposition}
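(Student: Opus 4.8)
The plan is to treat the four assertions separately, disposing of $(a)$ and $(d)$ by soft general arguments and reserving the Auslander--Reiten bookkeeping for $(b)$ and $(c)$.

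For $(a)$ I would first record that the class of regular modules is closed under extensions, using only the vanishing $\Hom(\cI,\cP)=\Hom(\cI,\cR)=\Hom(\cR,\cP)=0$. Given a short exact sequence $0\to A\to B\to C\to0$ with $A,C$ regular, decompose $B=B_{\cP}\oplus B_{\cR}\oplus B_{\cI}$ into preprojective, regular and preinjective parts. The composite $A\to B\twoheadrightarrow B_{\cP}$ vanishes (no maps $\cR\to\cP$), so the surjection $B\to B_{\cP}$ factors through $C=B/A$; but a regular module admits no nonzero map to a preprojective one, forcing $B_{\cP}=0$. Dually, $B_{\cI}\hookrightarrow B\twoheadrightarrow C$ vanishes (no maps $\cI\to\cR$), so $B_{\cI}\subseteq A$, and a preinjective submodule of a regular module is zero; hence $B_{\cI}=0$ and $B$ is regular. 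Since every $E\in\cX$ is regular, induction along an $\cX$-filtration gives $(a)$.

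For $(d)$, which is the real payoff, I would invoke the Simplification machinery of \cite{Ri4}: $\cE(\cX)$ is an exact abelian subcategory whose simple objects are exactly the members of $\cX$, and its representation type is governed by the Ext-quiver of $\cX$, whose number of loops at a vertex $E$ is $\dim_k\Ext(E,E)$. By Corollary \ref{Extgeq2} every regular module, and in particular every $E\in\cX$, satisfies $\dim_k\Ext(E,E)\geq2$; together with $\End(E)=k$ this shows that already for a single brick $E\in\cX$ the full subcategory $\cE(\{E\})\subseteq\cE(\cX)$ is equivalent to the category of (nilpotent) representations of the quiver with one vertex and at least two loops, which is a wild problem. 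Hence $\cE(\cX)$ is wild, independently of $(b)$ and $(c)$. The point needing care is the precise form of the simplification equivalence and the verification that two self-extensions already force wildness, in contrast to the tame single-loop situation that occurs for $r=2$; this is where I would lean hardest on \cite{Ri4}.

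Finally, for $(b)$ and $(c)$ I would argue inside the $\ZZ A_\infty$ components, and I expect this to be the main obstacle. The guiding idea is that an $\cX$-filtration never introduces Auslander--Reiten translates: if $N\in\cE(\cX)$ is indecomposable, its $\cX$-socle and $\cX$-top are members of $\cX$ (hence quasi-simple and pairwise orthogonal in our applications), whereas a regular indecomposable of quasi-length $\geq2$ has quasi-top a strictly positive $\tau^{-}$-translate of its quasi-socle. Playing the orthogonality of $\cX$ against the nonzero maps forced along a ray of length $\geq2$ should yield a contradiction, giving $(c)$; then $(b)$ follows because the quasi-simple modules of a component form a single $\tau$-orbit, so two of them differ by a power of $\tau$, and the same translate-avoidance, combined with the cone descriptions of Propositions \ref{Cone1} and \ref{Cone2}, excludes a nontrivial shift from lying in $\cE(\cX)$. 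The delicate step is to control exactly which Hom-spaces between $\tau$-shifted quasi-simples vanish, so that the orthogonality hypothesis on $\cX$ can be brought to bear; this positional analysis, rather than $(a)$ or $(d)$, is where the proof does its real work.
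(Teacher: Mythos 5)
Your treatment of $(a)$ is correct and in fact more self-contained than the paper's, which simply cites \cite[1.1]{Ker2}: the decomposition $B=B_{\cP}\oplus B_{\cR}\oplus B_{\cI}$ together with the vanishing of $\Hom(\cR,\cP)$ and $\Hom(\cI,\cR)$ does show that the regular modules are closed under extensions, and induction along an $\cX$-filtration finishes it. Your argument for $(d)$ is essentially the one in the paper: by Corollary \ref{Extgeq2} each $E\in\cX$ has $t:=\dim_k\Ext(E,E)\geq 2$, and by \cite[3.7]{Gab1} and \cite[Remark 1.4]{Ker2} the full subcategory $\cE(\{E\})\subseteq\cE(\cX)$ is equivalent to $\modd k\langle\langle X_1,\ldots,X_t\rangle\rangle$ (equivalently, nilpotent representations of the quiver with one vertex and $t$ loops), which is wild for $t\geq 2$.

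The gap is in $(b)$ and $(c)$, which the paper disposes of by citing \cite[1.1, 1.4]{Ker2} and \cite[9.2]{Ker3}, and which your sketch does not actually prove. First, the ``$\cX$-socle'' and ``$\cX$-top'' of an indecomposable $N\in\cE(\cX)$ --- the first and last subquotients $Y_1$ and $Y_n/Y_{n-1}$ of an $\cX$-filtration --- are not the quasi-socle and quasi-top of $N$ in the Auslander--Reiten sense. A quasi-simple regular module can embed into $N=S[l]$ without being isomorphic to $S$ (it need not even lie in the component of $N$), so knowing that $Y_1\cong E$ and $N/Y_{n-1}\cong E'$ with $E,E'\in\cX$ orthogonal bricks yields no contradiction with $l\geq 2$: if the composite $E\hookrightarrow N\twoheadrightarrow E'$ is zero one merely learns $E\subseteq Y_{n-1}$, and if it is nonzero one gets $E\cong E'$ and a split summand, which only rules out the trivial case $N\cong E$. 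Second, your proposed appeal to Propositions \ref{Cone1} and \ref{Cone2} for $(b)$ is not available: those results require the test family to consist of \emph{elementary} modules of bounded dimension, whereas $\cX$ is only assumed to consist of pairwise orthogonal regular bricks with self-extensions. Finally, \cite[9.2]{Ker3} (regular bricks are quasi-simple) settles quasi-simplicity only for the members of $\cX$ themselves; a general indecomposable of $\cE(\cX)$ need not be a brick --- the paper exploits exactly this in Corollary \ref{alotofcomponents} --- so $(c)$ genuinely needs the finer analysis of \cite[1.4]{Ker2}, which your ``positional analysis'' acknowledges but leaves open.
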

\begin{proof}
$(a)$ and $(b)$ are proven in \cite[1.1, 1.4]{Ker2} for any wild hereditary algebra and $(c)$ follows by \cite[1.4]{Ker2} and the fact that every regular brick in $\modd \cK_r$ is quasi-simple \cite[9.2]{Ker3}. Let $M \in \cX$, then $t := \dim_k \Ext(M,M) \geq 2$ by $\ref{Extgeq2}$. Due to \cite[3.7]{Gab1} and \cite[Remark 1.4]{Ker2} the category $\cE(\{M\}) \subseteq \cE(\cX)$ is equivalent to the category of finite dimensional modules over the power-series ring $k\langle \langle X_1,\ldots,X_t \rangle \rangle$ in non-commuting variables $X_1,\ldots,X_t$. Since $t \geq 2$ the category $\cE(\{M\}) \subseteq \cE(\cX)$ is wild, and also $\cE(\cX)$.
\end{proof}

We will use the above result to prove the existence of numerous components, such that all of its vertices correspond to modules of constant $d$-socle rank. By duality, all results also follow for constant radical rank. As a by-product we verify the wildness of $\EKP = \ESP_1$ and $\EIP = \ERP_1$. Using the functor $\fF \colon \modd \cK_r \to \modd kE_r$, we show the wildness of the corresponding full subcategories in $\modd_2 kE_r$ of $E_r$-modules of Loewy length $\leq 2$.\\

\subsection{Passage between $\cK_r$ and $\cK_s$}
Let  $2 \leq r < s \in \NN$. Denote by $\inf^s_r \colon \modd \cK_r \to \modd \cK_s$ the functor that assigns to a $\cK_r$-module $M$ the module $\inf^s_r(M)$ with the same underlying vector space so that the action of $e_1,e_2,\gamma_1,\ldots,\gamma_r$ on $\inf^s_r(M)$ stays unchanged and all other arrows act trivially on $\inf^s_r(M)$.
Moreover let $\iota \colon \cK_r \to \cK_s$ be the natural $k$-algebra monomorphism given by $\iota(e_i) = e_i$ for $i \in \{1,2\}$ and $\iota(\gamma_j) = \gamma_j$. Then each $\cK_s$-module $N$ becomes a $\cK_r$-module $N^\ast$ via pullback along $\iota$. Denote the corresponding functor by $\res^s_r \colon \modd \cK_s \to \modd \cK_r$. In the following $r,s$ will be fixed, so we suppress the index and write just $\inf$ and $\res$.

\begin{Lemma}\label{functorG}
Let $2 \leq r < s \in \NN$. The functor $\inf \colon \modd \cK_r \to \modd \cK_s$ is fully faithful and exact. The essential image of $\inf$ is a subcategory of $\modd \cK_s$ closed under factors and submodules. Moreover $\inf(M)$ is indecomposable if and only if $M$ is indecomposable in $\modd \cK_r$.
\end{Lemma}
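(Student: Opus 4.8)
The plan is to verify each claimed property of $\inf$ separately, working directly from the concrete description of the functor on representations. Recall that $\inf(M)$ has the same underlying vector spaces $M_1,M_2$ as $M$, with $\inf(M)(\gamma_j) = M(\gamma_j)$ for $1 \leq j \leq r$ and $\inf(M)(\gamma_j) = 0$ for $r < j \leq s$. First I would establish full faithfulness. Given $\cK_r$-modules $M,N$, a morphism $\inf(M) \to \inf(N)$ is a pair of linear maps $(f_1,f_2)$ making the squares for $\gamma_1,\ldots,\gamma_s$ commute; since the squares for $\gamma_{r+1},\ldots,\gamma_s$ read $0 = 0$ and impose no condition, the commuting-square conditions are exactly the same as those defining a $\cK_r$-morphism $M \to N$. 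Thus $\Hom_{\cK_s}(\inf M,\inf N) = \Hom_{\cK_r}(M,N)$ naturally, giving that $\inf$ is fully faithful. Exactness is immediate because $\inf$ leaves the underlying vector spaces and the action of $e_1,e_2$ unchanged, so a sequence of $\cK_r$-modules is exact if and only if its image is exact (exactness is checked vertex-wise on the underlying spaces).

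Next I would treat the closure of the essential image under submodules and factors. The key observation is a characterization of the essential image: a $\cK_s$-module $N$ lies in the essential image of $\inf$ (up to isomorphism) if and only if $N(\gamma_j) = 0$ for all $r < j \leq s$. Indeed, any such $N$ is manifestly $\inf$ of the $\cK_r$-module obtained by forgetting the trivially-acting arrows, and conversely every $\inf(M)$ has this form. With this characterization the closure properties become transparent: if $N$ satisfies $N(\gamma_j)=0$ for $j > r$ and $N' \subseteq N$ is a $\cK_s$-submodule, then $N'(\gamma_j)$ is the restriction of $N(\gamma_j)$, hence zero for $j > r$; dually, the induced map on any quotient $N/N'$ is zero for $j > r$. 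So both submodules and factors again satisfy the defining condition and thus lie in the essential image.

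Finally, the statement that $\inf(M)$ is indecomposable if and only if $M$ is indecomposable follows formally from full faithfulness together with the observation that $\inf$ preserves the zero object and finite direct sums (again clear from the vector-space description, since $\inf(M \oplus M') = \inf(M) \oplus \inf(M')$ with the trivially-acting arrows respecting the decomposition). A fully faithful functor induces a ring isomorphism $\End_{\cK_r}(M) \cong \End_{\cK_s}(\inf M)$; since a finite-dimensional module is indecomposable precisely when its endomorphism ring has no nontrivial idempotents, and idempotents correspond under this ring isomorphism, $M$ is indecomposable if and only if $\inf(M)$ is. Alternatively one argues that any decomposition $\inf(M) = A \oplus B$ has both summands in the essential image (by the closure under factors/submodules just proved), hence pulls back to a decomposition of $M$.

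I do not expect a genuine obstacle here: the content is entirely the bookkeeping of identifying the essential image with those $\cK_s$-modules on which the extra arrows act trivially. The one point deserving care is the closure under factors and submodules, where one must confirm that the induced arrow-actions on sub- and quotient representations are the honest restrictions and co-restrictions of the original maps, so that vanishing is inherited; this is the place where an argument could be sloppy, but it is routine once the characterization of the essential image is in hand.
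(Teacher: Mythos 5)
Your proposal is correct and follows essentially the same route as the paper: the paper likewise takes full faithfulness and exactness as immediate, identifies sub- and factor modules of $\inf(M)$ as those $\cK_s$-modules on which $\gamma_{r+1},\ldots,\gamma_s$ act trivially (hence as inflations of their restrictions along $\res$), and deduces indecomposability from the induced isomorphism of endomorphism rings. Your version merely spells out the commuting-square verification of full faithfulness and phrases the endomorphism-ring step via idempotents rather than locality, which are equivalent here.
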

\begin{proof}
Clearly $\inf$ is fully faithful and exact. Now let $M \in \modd \cK_r$ and $U \subseteq \inf(M)$ be a submodule. Then $\gamma_i$ $(i > r)$ acts trivially on $U$ and hence the pullback $\res(U) =: U^\ast$ is a $\cK_r$-module with $\inf(U^\ast) = \inf \circ \res(U) = U$.
Now let $f \in \Hom_{\cK_s}(\inf(M),V)$ be an epimorphism. Let $v \in V$ and $m \in M$ such that $f(m) = v$. It follows
$\gamma_i v = \gamma_i f(m) = f(\gamma_i m) = 0$ for all $i > r$. This shows that $\inf(V^\ast) = \inf \circ \res(V) = V$.\\
Since $\inf$ is fully faithful we have $\End_{\cK_s}(\inf(M)) \cong \End_{\cK_r}(M)$. Hence $\End_{\cK_s}(\inf(M))$ is local if and only if $\End_{\cK_r}(M)$ is local. 
\end{proof}

\noindent Statement $(a)$ of the succeeding Lemma is stated in \cite[3.1]{Bo1} without proof.

\begin{Lemma}\label{RegularAndQuasi}
Let $2 \leq r < s$ and let $M$ be an indecomposable $\cK_r$-module that is  not simple. The following statements hold. 
\begin{enumerate}[topsep=0em, itemsep= -0em, parsep = 0 em, label=$(\alph*)$]
\item $\inf(M)$ is regular and quasi-simple.
\item $\inf(M) \not\in \CSR_m$ for all $m \in \{1,\ldots,s-r\}$.
\end{enumerate}
\end{Lemma}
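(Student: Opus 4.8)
The plan is to treat the three assertions --- indecomposability, regularity, and quasi-simplicity of $\inf(M)$, and then the failure of constant socle rank --- in turn, relying throughout on Lemma \ref{functorG}: $\inf$ is exact and fully faithful, reflects indecomposability, and has essential image closed under sub- and quotient modules. Indecomposability of $\inf(M)$ is immediate from that lemma. For regularity, note first that an indecomposable non-simple $M$ is sincere, so $\dim_k M_1,\dim_k M_2\ge 1$. Writing $d=\dimu M=\dimu \inf(M)$ and comparing the two Euler forms gives the identity $q_{\cK_s}(d)=q_{\cK_r}(d)-(s-r)(\dim_k M_1)(\dim_k M_2)$. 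Since $M$ is indecomposable over $\cK_r$ we have $q_{\cK_r}(d)\le 1$ by Kac's Theorem \cite[1.10]{Kac}, while $(s-r)(\dim_k M_1)(\dim_k M_2)\ge 1$; hence $q_{\cK_s}(d)\le 0$, and because $\inf(M)$ is indecomposable the same theorem forces $\inf(M)$ to be regular.

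Quasi-simplicity is the heart of the matter, and my approach is to assume $\inf(M)$ has quasi-length $l\ge 2$ and squeeze its two bottom layers back into $\modd\cK_r$. Let $N$ be the regular socle of $\inf(M)$, so that along the ray of monomorphisms $N=N[1]\hookrightarrow N[2]\hookrightarrow\cdots$ we have $N[2]\subseteq\inf(M)$. As a submodule of the inflated module $\inf(M)$, the module $N[2]$ is again inflated, say $N[2]=\inf(Y)$ with $Y$ indecomposable; likewise the submodule $N$ and the quotient $N[2]/N\cong \tau^{-1}N$ of $N[2]$ are inflated, say $N=\inf(A)$ and $\tau^{-1}N=\inf(B)$. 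Since $\inf$ sends simple modules to non-regular ones, $A$ and $B$ are non-simple, hence sincere. Because $\inf$ is exact and fully faithful, the non-split sequence $0\to N\to N[2]\to\tau^{-1}N\to 0$ descends to a non-split sequence $0\to A\to Y\to B\to 0$ in $\modd\cK_r$ (indeed $Y\cong A\oplus B$ would contradict indecomposability of $N[2]$), so $\Ext^1_{\cK_r}(B,A)\neq 0$.

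Now I would compare dimension vectors. Writing $\dimu A=(a,b)$ and applying the inverse Coxeter matrix of $\cK_s$ (valid as $N$ is regular, hence non-injective) gives $\dimu B=\dimu\tau^{-1}N=(c,sc-b)$ with $c=sb-a$; sincerity of $A$ and $B$ forces $b,c\ge 1$. Both being indecomposable over $\cK_r$, Kac's Theorem yields $q_{\cK_r}(\dimu A)\le 1$ and $q_{\cK_r}(\dimu B)\le 1$. Expanding and adding these gives $(s^2-rs+2)(b^2+c^2)+2(r-2s)bc\le 2$; since $s^2-rs+2>0$ and $b^2+c^2\ge 2bc$, the left-hand side is at least $2bc\bigl((s-1)(s-1-r)+1\bigr)\ge 2bc\ge 2$. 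Hence equality holds throughout, forcing $b=c=1$ and $r=s-1$, so $\dimu A=(r,1)$ and $\dimu B=(1,r)$. But $(1,r)=\dimu P_2$, so $B\cong P_2$ is projective and $\Ext^1_{\cK_r}(B,A)=0$, contradicting $\Ext^1_{\cK_r}(B,A)\neq 0$. Thus $l=1$ and $\inf(M)$ is quasi-simple. The main obstacle is exactly this reduction: one must extract an honest length-two inflated sublayer of $\inf(M)$ and then play the $\cK_r$- against the $\cK_s$-quadratic form; the point is that the constraint $q_{\cK_r}\le 1$ on \emph{both} layers is so rigid that the only surviving case makes the upper layer projective.

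Finally, for part $(b)$ I would argue by producing two subspaces with different socle dimensions. Fix $1\le m\le s-r$ and observe that on $\inf(M)$ the operator $x^{\inf(M)}_u$ depends only on the first $r$ coordinates of $u$, since $\gamma_{r+1},\dots,\gamma_s$ act as zero. Choosing $U\in\Gr_{m,s}$ inside $\langle e_{r+1},\dots,e_s\rangle$ --- possible because $\dim_k\langle e_{r+1},\dots,e_s\rangle=s-r\ge m$ --- gives $x^{\inf(M)}_u=0$ for all $u\in U$, so $\Soc_U(\inf(M))=\inf(M)$ has dimension $\dim_k M$. On the other hand, as $M$ is indecomposable and not simple some $\gamma_j^M$ with $1\le j\le r$ is non-zero, whence $\ker x^{\inf(M)}_{e_j}\subsetneq\inf(M)$; choosing any $U'\in\Gr_{m,s}$ with $e_j\in U'$ gives $\Soc_{U'}(\inf(M))\subseteq\ker x^{\inf(M)}_{e_j}$, of dimension strictly less than $\dim_k M$. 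Therefore $\dim_k\Soc_U(\inf(M))\neq\dim_k\Soc_{U'}(\inf(M))$, so $\inf(M)\notin\CSR_m$ for every $m\in\{1,\dots,s-r\}$.
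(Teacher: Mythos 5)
Your proof is correct. The regularity computation in part $(a)$ and all of part $(b)$ coincide with the paper's own argument: you compare $q_{\cK_s}$ with $q_{\cK_r}$ on the common dimension vector using sincerity of a non-simple indecomposable, and in $(b)$ you test against a subspace inside $\langle e_{r+1},\dots,e_s\rangle_k$ versus one containing a coordinate $e_j$ on which $M$ acts non-trivially. Where you genuinely diverge is the quasi-simplicity step. The paper pulls the irreducible maps $U[i-1]\to U[i]\to \tau^{-1}U[i-1]$ back through $\inf$ to irreducible maps $A\to M\to B$ in $\modd\cK_r$ with $\tau B=A$, and then obtains a contradiction in one line from the fact that the Coxeter matrices of $\cK_r$ and $\cK_s$ cannot agree on the non-zero vector $\dimu B$. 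You instead descend the non-split sequence $0\to N\to N[2]\to\tau^{-1}N\to 0$ at the bottom of the ray to a non-split sequence $0\to A\to Y\to B\to 0$ over $\cK_r$ and play Kac's bound $q_{\cK_r}\le 1$ on both end terms against the $\cK_s$-Coxeter relation $\dimu B=(c,sc-b)$, $c=sb-a$; your estimate correctly isolates the single surviving case $s=r+1$, $b=c=1$, where $\dimu B=(1,r)$ forces $B\cong P_2$ projective and hence $\Ext(B,A)=0$, contradicting non-splitness. Both routes are sound. The paper's is shorter but quietly uses that $\inf$ reflects irreducibility of morphisms (a point it only sketches); yours trades that for a longer numerical argument that needs nothing beyond exactness, full faithfulness, closure of the essential image under subquotients (exactly Lemma $\ref{functorG}$) and the uniqueness of indecomposables on real roots.
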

\begin{proof}
$(a)$ Write $\dimu M = (a,b) \in \NN_0 \times \NN_0$. Since $M$ is not simple we have $ab \neq 0$ and $q(\dimu M) = a^2 + b^2 - r ab \leq 1$. It follows $q(\dimu \inf(M)) = a^2 + b^2 - sab = a^2 + b^2 -rab -(s-r) ab  \leq 1 -(s-r)ab < 1$. Hence $q(\dimu \inf(M)) \leq 0$ and $\inf(M)$ is regular.\\
Assume that $\inf(M)$ is not quasi-simple, then $\inf(M) = U[i]$ for $U$ quasi-simple with $i \geq 2$. By $\ref{functorG}$ we have $U[i-1] = \inf(A)$ and $\tau^{-1} U[i-1] = \inf(B)$ for some $A,B$ indecomposable in $\modd \cK_r$. 
Fix an irreducible monomorphism $f \colon \inf(A) \to \inf(M)$. Since $\inf$ is full, we find $g \colon A \to M$ with $\inf(g) = f$. The faithfulness of $\inf$ implies that $g$ is an irreducible monomorphism $g \colon A \to M$. By the same token there exists an irreducible epimorphism $M \to B$. As all irreducible morphisms in $\cP$ are injective and all irreducible morphisms in $\cI$ are surjective, $M$ is located in a $\ZZ A_\infty$ component. It follows that $\tau B = A$ in $\modd \cK_r$. Let $\dimu B = (c,d)$, then the Coxeter matrices for $\cK_r$ and $\cK_s$ yield
\[ ((r^2-1)c -rd,rc-d) = \dimu \tau B = \dimu A = \dimu \inf(A) = \dimu \tau \inf(B) = ((s^2-1)c-sd,sc-d).\]
This is a contradiction since $s \neq r$.\\
$(b)$ Denote by $\{e_1,\ldots,e_s\}$ the canonical basis of $k^s$. Let $1 \leq m \leq s-r$ and set $U := \langle e_{r+1},\ldots,e_{r+m} \rangle_k$. Then $\Soc_{U}(M) = \bigcap^m_{i=1} \ker (x^M_{e_{r+i}}) = M$. Let $j \in \{1,\ldots,r\}$ such that $\gamma_j$ acts non-trivially on $M$. Let $V \in \Gr_{m,s}$ such that $e_j \in V$. Then $\Soc_{V}(M) \neq M$ and $M$ does not have constant $m$-socle rank.
\end{proof}

\begin{proposition}\label{goingup}
Let $2 \leq r < s \in \NN$, $1 \leq d < r$ and $M$ be an indecomposable and non-simple $\cK_r$-module. Then the following statements hold.
\begin{enumerate}[topsep=0em, itemsep= -0em, parsep = 0 em, label=$(\alph*)$]
\item   If $M \in {^\perp\mathfrak{X}_{d,r}}$ then $\inf(M) \in {^\perp\mathfrak{X}_{d+s-r,s}}$.
\item If $M \in \mathfrak{X}^\perp_{d,r}$ then $\inf(M) \in \mathfrak{X}^\perp_{d+s-r,s}$.
\item If $M \in \overline{\mathfrak{X}}_{d,r}$ then $\inf(M)$ is contained in a regular  component $\cC$ with $\cC \subseteq \CSR_{d+s-r}$.
\end{enumerate}
\end{proposition}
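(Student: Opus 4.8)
The plan is to deduce part $(c)$ formally from parts $(a)$ and $(b)$, so the real work lies in the two orthogonality statements. The guiding observation is that for $W \in \Gr_{d+s-r,s}$ the test module $X_W$ has dimension vector $(1,s-(d+s-r)) = (1,r-d)$, exactly the dimension vector of the modules $X_U$ with $U \in \Gr_{d,r}$; this numerical coincidence is what singles out the shift $d \mapsto d+s-r$. The common mechanism in $(a)$ and $(b)$ is that any homomorphism between $\inf(M)$ and some $X_W$ has image inside the essential image of $\inf$ (Lemma \ref{functorG}), so it can be pulled back to $\modd \cK_r$ where it violates the hypothesis on $M$.

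For part $(b)$, suppose $\varphi \colon X_W \to \inf(M)$ is nonzero for some $W \in \Gr_{d+s-r,s}$. Its image is a submodule of $\inf(M)$, hence equals $\inf(N)$ for $N := \res(\im\varphi)$ by Lemma \ref{functorG}, and it is also a nonzero quotient of $X_W$. By Lemma \ref{LemmaQuotient}, applied in $\modd \cK_s$, this quotient is indecomposable of dimension vector $(1,b)$; the case $b=0$ is excluded, since then $\im\varphi$ would be the simple injective module embedding into the regular indecomposable $\inf(M)$ (Lemma \ref{RegularAndQuasi}), forcing it to be a direct summand. Thus $1 \le b \le r-d$, so $N$ is indecomposable with $[N] \in Q^{c}$ for $c := r-b$ satisfying $d \le c < r$. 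First I would apply Proposition \ref{dim}$(c)$ to obtain $U \in \Gr_{d,r}$ and an epimorphism $X_U \twoheadrightarrow N$; since $\inf$ is full and faithful the inclusion $\inf(N) \hookrightarrow \inf(M)$ descends to a monomorphism $N \hookrightarrow M$, and composing gives a nonzero map $X_U \to M$, contradicting $M \in \mathfrak{X}^\perp_{d,r}$.

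For part $(a)$, suppose $\psi \colon \inf(M) \to X_W$ is nonzero. Since $X_W$ is local with semisimple radical $\rad X_W \in \add P_1$ and $\Hom(\cR,\cP)=0$, no nonzero map from the regular module $\inf(M)$ can have image inside $\rad X_W$, so $\psi$ must be surjective. Then $X_W = \im\psi = \inf(N)$ with $N := \res(X_W)$, which is indecomposable by Lemma \ref{functorG} and has dimension vector $(1,r-d)$; hence $[N] \in Q^{d}$ and $N \cong X_{U}$ for some $U \in \Gr_{d,r}$ by Proposition \ref{dim}$(a)$. As $\inf$ is full and faithful, the surjection $\psi$ descends to an epimorphism $M \twoheadrightarrow X_U$, once more a nonzero map contradicting $M \in {^\perp\mathfrak{X}_{d,r}}$.

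Finally, for part $(c)$: if $M \in \overline{\mathfrak{X}}_{d,r}$ then parts $(a)$ and $(b)$ give $\inf(M) \in \overline{\mathfrak{X}}_{d+s-r,s}$, while $\inf(M)$ is regular and quasi-simple by Lemma \ref{RegularAndQuasi}$(a)$; since $s > r \ge 2$ the hypotheses of Lemma \ref{Finalall} hold with parameter $d+s-r$, whence the component $\cC$ containing $\inf(M)$ satisfies $\cC \subseteq \CSR_{d+s-r}$. The hard part is the bookkeeping in $(a)$ and $(b)$: one must verify that the image of the relevant homomorphism, a priori only a sub- or quotient module of a $\cK_s$-test module, is again a $\cK_r$-test module or is surjected onto by one. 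This rests precisely on the dimension-vector identity $\dimu X_W = (1,r-d)$ together with the fine structure of sub- and quotient modules of $X_W$ recorded in Lemma \ref{LemmaQuotient} and Proposition \ref{dim}, and on the fact that $\inf$ reflects monomorphisms, epimorphisms, and indecomposability.
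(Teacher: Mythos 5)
Your proposal is correct and follows essentially the same route as the paper: surjectivity of maps to $X_W$ via locality and $\Hom(\cR,\cP)=0$ in $(a)$, analysis of $\im\varphi$ via Lemmas \ref{functorG}, \ref{LemmaQuotient} and Proposition \ref{dim} in $(b)$, and $(c)$ deduced from $(a)$, $(b)$, Lemma \ref{RegularAndQuasi} and Lemma \ref{Finalall}. The only cosmetic difference is that in $(b)$ you compose the epimorphism $X_U\twoheadrightarrow N$ with the inclusion $N\hookrightarrow M$ directly, whereas the paper precomposes a nonzero element of $\Hom(X_U,M)$ with the epimorphism; these are interchangeable.
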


\begin{proof}
By definition it is $1 \leq d+s-r < s$. Now fix $V \in \Gr_{d+s-r,s}$ and note that $\dimu X_V = (1,s-(d+s-r)) = (1,r-d)$, which is the dimension vector of every $\cK_r$-module $X_U$ for $U \in \Gr_{d,r}$.\\
$(a)$ Assume that $\Hom(\inf(M),X_V) \neq 0$ and let $0 \neq f \colon \inf(M) \to X_V$. By $\ref{functorG}$ and $\ref{elementary}$ the $\cK_s$-module $\inf(M)$ is regular and every proper submodule of $X_V$ is preprojective. Hence $f$ is surjective onto $X_V$. Again $\ref{functorG}$ yields $Z \in \modd \cK_r$ indecomposable with $\dimu Z = (1,r-d) = \dimu X_V$ such that $X_V = \inf(Z)$. By $\ref{dim}$ there exists $U \in \Gr_{d,r}$ with $Z = X_U$. Since $\inf$ is fully faithful it follows $0 = \Hom(M,X_U) \cong \Hom(\inf(M),\inf(X_U)) = \Hom(\inf(M),X_V) \neq 0$, a contradiction.\\
$(b)$ Assume that $\Hom(X_V,\inf(M)) \neq 0$ and let $f \colon X_V \to \inf(M)$ non-zero. Since $\inf(M)$ is regular indecomposable the module $\im f \subseteq \inf(M)$ is not injective and $\ref{LemmaQuotient}$ yields that $\im f$ is indecomposable and regular. 
As $\im f$ is a submodule of $\inf(M)$ there exists an indecomposable module $Z \in \modd \cK_r$ with $\inf(Z) = \im f$. Since $\im f$ is not simple we have $\dimu \im f = (1,r-c)$ for $1 \leq r-c \leq r-d$. Hence $Z = X_U$ for $U \in \Gr_{c,r}$ and by $\ref{dim}(d)$ there exists $W \in \Gr_{d,r}$ and an epimorphism
$\pi \colon X_W \to X_U$. We conclude with $0 \neq \Hom(\im f,\inf(M)) = \Hom(\inf(X_U),\inf(M)) \cong \Hom(X_U,M)$ and the surjectivity of $\pi \colon X_W \to X_U$ that $\Hom(X_W,M) \neq 0$, a contradiction to the assumption.\\
$(c)$ By $\ref{RegularAndQuasi}$ the module $\inf(M)$ is quasi-simple in a regular component and satisfies the conditions of $\ref{Finalall}$ for $q := d+s-r$ by $(a)$ and $(b)$. 
\end{proof}

\begin{examples}\label{twoexamples}
The following two examples will be helpful later on.
\begin{enumerate}[topsep=0em, itemsep= -0em, parsep = 0 em, label=$(\arabic*)$]
\item Let $r = 3$. Ringel has shown that the representation
$F = (k^2,k^2,F(\gamma_1),F(\gamma_2),F(\gamma_3))$ with the linear maps $F(\gamma_1) = id_{k^2}$, $F(\gamma_2)(a,b) = (b,0)$ and $F(\gamma_3)(a,b) = (0,a)$ is elementary. Let $E$ be the corresponding $\cK_3$-module. Then $\dimu E = (2,2)$ and it is easy to see that every indecomposable submodule of $E$ has dimension vector $(0,1)$ or $(1,2)$. In particular $\Hom(W,E) = 0$ for each indecomposable module with dimension vector $\dimu W = (1,1)$. Assume now that $f \colon E \to W$ is non-zero, then $f$ is surjective since every proper submodule of $W$ is projective. Since $E$ is elementary, $\ker f$ is a preprojective module with dimension vector $(1,1)$, a contradiction. Hence 
 $E \in \overline{\mathfrak{X}}_{2,3}$. 
 \item Given a regular component $\cC$, there are unique quasi-simple modules $M_\cC$ and $W_\cC$ in $\cC$ such that $\ERP_1 \cap \cC = (\to W_\cC)$ and $\ESP_1 \cap \cC = (M_\cC \to)$. The width $\cW(\cC) \in \ZZ$ is defined \cite[3.3, 3.3.1]{Wor1} as the unique integer satisfying $\tau^{\cW(\cC) + 1} M_\cC = W_\cC$. In fact it is shown that $\cW(\cC) \in \NN_0$ and an example of a regular component $\cC$ with $\cW(\cC) = 0$ and $\End(M_\cC) = k$ is given. Since $X_U \cong D \tau X_U$ for $U \in \Gr_{1,r}$ we conclude for an arbitrary regular component $\cC$: 
\begin{align*}
 \cW(\cC) = 0 &\Leftrightarrow \tau M_\cC = W_\cC \Leftrightarrow \tau M_\cC \in \EIP \\
&\Leftrightarrow \Hom(X_U,M_\cC) = 0 = \Ext(X_U,\tau M_\cC) \ \text{for all} \ U \in \Gr_{1,r} \\ 
 & \Leftrightarrow \Hom(X_U,M_\cC) = 0 = \Hom(M_\cC,X_U) \ \text{for all} \ U \in \Gr_{1,r} \Leftrightarrow M_\cC \in \overline{\mathfrak{X}}_{1,r}.
\end{align*}
\end{enumerate}

\end{examples}

\begin{Lemma}\label{NOstable}
Let $s \geq 3$ and $2 \leq d < s$. Then there exists a regular module $E_d$ with the following properties.
\begin{enumerate}[topsep=0em, itemsep= -0em, parsep = 0 em, label=$(\alph*)$]
\item $E_d$ is a $($quasi-simple$)$ brick in $\modd \cK_s$.
\item $E_d \in \overline{\mathfrak{X}}_{d,s}$.
\item There exist $V,W \in \Gr_{1,s}$ with $\Hom(X_V,E_d) = 0 \neq \Hom(X_W,E_d)$.
\end{enumerate}
\end{Lemma}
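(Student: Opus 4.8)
The plan is to realize each $E_d$ as an inflation $\inf(M) = \inf^s_r(M)$ of one of the two example bricks of \ref{twoexamples}, choosing the source $(r,d')$ so that the index shift $d' \mapsto d'+(s-r)$ of Proposition \ref{goingup} lands on $d$. I would split into two cases. If $2 \le d \le s-2$, set $r := s-d+1$, so that $3 \le r \le s-1$, and let $M := M_\cC$ be the quasi-simple module of a regular component $\cC$ of $\cK_r$ with $\cW(\cC)=0$ (such a component exists by \cite[3.3.1]{Wor1}); by \ref{twoexamples}(2) we then have $M \in \overline{\mathfrak{X}}_{1,r}$ and $\End(M)=k$, with $1+(s-r)=d$. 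If $d=s-1$, set $r:=3$ and let $M:=E$ be the elementary $\cK_3$-module of \ref{twoexamples}(1), so that $M \in \overline{\mathfrak{X}}_{2,3}$, $\End(M)=k$ (as $M$ is elementary, by \ref{CompElementary}), and $2+(s-r)=d$; when $s=3$ no inflation is needed and one simply takes $E_d:=E$. In every remaining case put $E_d := \inf^s_r(M)$.

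Properties (a) and (b) are then formal. Since $M$ is indecomposable and non-simple, $\inf(M)$ is regular and quasi-simple by \ref{RegularAndQuasi}(a), and since $\inf$ is fully faithful (\ref{functorG}) we get $\End(\inf M) \cong \End(M)=k$, so $E_d$ is a quasi-simple brick, which is (a). For (b), applying parts (a) and (b) of Proposition \ref{goingup} to $M \in {}^{\perp}\mathfrak{X}_{d',r}$ and $M \in \mathfrak{X}^{\perp}_{d',r}$ (with $d'=1$, resp. $d'=2$) yields $\inf(M) \in {}^{\perp}\mathfrak{X}_{d,s} \cap \mathfrak{X}^{\perp}_{d,s} = \overline{\mathfrak{X}}_{d,s}$.

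The heart of the argument is (c). For a line $V=\langle\alpha\rangle \in \Gr_{1,s}$ the homological description in \ref{HomChar} (case $d=1$) gives $\dim_k \Hom(X_V,\inf M) = \dim_k \ker\bigl(x^{\inf M}_{\alpha}|_{(\inf M)_1}\bigr)$, and since $\gamma_i$ acts as zero on $\inf M$ for $i>r$ we have $x^{\inf M}_{\alpha} = x^{M}_{\bar\alpha}$ on the common underlying space, where $\bar\alpha=(\alpha_1,\ldots,\alpha_r)$. Taking $W:=\langle e_{r+1}\rangle$ (legitimate since $r+1 \le s$) forces $\bar\alpha=0$, hence $x^{\inf M}_{e_{r+1}}=0$ and $\dim_k \Hom(X_W,\inf M) = \dim_k M_1 > 0$. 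Taking $V:=\langle e_1\rangle$, property (c) reduces to the injectivity of $x^{M}_{e_1}|_{M_1}$: in the width-zero case $M \in \overline{\mathfrak{X}}_{1,r} \subseteq \mathfrak{X}^{\perp}_{1,r} = \ESP_1 = \EKP$, so every restriction $x^{M}_{\beta}|_{M_1}$ with $\beta \ne 0$ is injective; in the case $M=E$ the operator $x^{E}_{e_1}$ is the action of $\gamma_1$, which equals $F(\gamma_1)=\id$ and is injective. In both cases $\Hom(X_V,\inf M)=0$, completing (c). For $s=3$, where $W=\langle e_{r+1}\rangle$ is unavailable, one instead notes that $x^{E}_{e_2}=F(\gamma_2)$ has non-trivial kernel, so $W:=\langle e_2\rangle$ works.

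The main obstacle is the interplay in (c) between the two required test lines: one must exhibit a single $V$ on which the relevant restriction is injective while a second line $W$ witnesses a non-zero $\Hom$, and this is precisely where the $\EKP$-property of the width-zero module (respectively the explicit matrices of $E$) enters. It is also the reason $d=s-1$ must be handled by the $\overline{\mathfrak{X}}_{2,3}$-example rather than a width-zero component: the latter would force $r=2$, but no regular module lies in $\overline{\mathfrak{X}}_{1,2}$. A secondary point to secure is the availability of width-zero regular components of $\cK_r$ for every $3 \le r \le s-1$.
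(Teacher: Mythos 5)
Your proposal is correct and follows essentially the same route as the paper: the same case split ($r:=s-d+1$ with a width-zero brick $M_\cC\in\overline{\mathfrak{X}}_{1,r}$ for $2\le d\le s-2$, and the elementary module $E\in\overline{\mathfrak{X}}_{2,3}$ inflated from $\cK_3$ for $d=s-1$), the same use of \ref{functorG}, \ref{RegularAndQuasi} and \ref{goingup} for (a) and (b), and the same test lines for (c) — your explicit choice $W=\langle e_{r+1}\rangle_k$ is exactly the computation hidden inside the paper's appeal to \ref{RegularAndQuasi}(b).
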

\begin{proof}
We start by considering $s = 3$ and $d = 2$. Pick the elementary module $E_d := E$ from the preceding example. $E$ is a brick and $E \in \overline{\mathfrak{X}}_{d,s}$. Set $V = \langle \alpha:= (1,0,0) \rangle_k$, $W = \langle \beta:= (0,1,0) \rangle_k$. By the definition of $E$ we have 
\[ \dim_k \ker x^E_{\alpha} = 2 \neq 3 = \dim_k \ker x^E_{\beta},\]
and therefore 
\[ \dim_k \Hom(X_V,E_d) = 0 \neq 1 = \dim_k \Hom(X_W,E_d).\]
\noindent Now let $s > 3$.
If $d = s-1$ consider $E_d := \inf^s_3(E)$. In view of $\ref{goingup}$  we have $E_d \in \overline{\mathfrak{X}}_{2+s-3,s} = \overline{\mathfrak{X}}_{d,s}$
Moreover $\inf(E)$ is a brick in $\modd \cK_s$ and for the canonical basis vectors $e_1,e_2 \in k^s$ and $V = \langle e_1 \rangle_k$, $W := \langle e_2 \rangle_k$ we get as before 
\[ \dim_k \Hom(X_V,\inf(E)) = 0 \neq 1 = \Hom(X_W,\inf(E)).\]
\noindent Now let $1 < d < s-1$. Set $r :=  1 + s - d \geq 3$, consider a regular component for $\cK_r$ with $\cW(\cC) = 0$ such that $M_\cC$ is a brick and set $M := M_\cC$. Then we have
$M \in \overline{\mathfrak{X}}_{1,r}$ and $\ref{goingup}$ yields
$E_d := \inf(M) \in \overline{\mathfrak{X}}_{1+s-(1+s-d),s} = \overline{\mathfrak{X}}_{d,s}$.
Since $M$ is a brick, $\inf(M)$ is a brick in $\modd \cK_s$. Recall that $\Hom(X_U,M) = 0$ for all $U \in \Gr_{d,r}$ implies that, viewing $M$ as a representation, the linear map $M(\gamma_1) \colon M_1 \to M_2$ corresponding to $\gamma_1$ is injective. Since the map is not affected by $\inf$, $\inf(M)(\gamma_1) \colon M_1 \to M_2$ is also injective. Therefore we conclude for the first basis vector $e_1 \in k^s$ and $V:= \langle e_1 \rangle_k$ that $0 = \Hom(X_V,\inf(M))$. By $\ref{RegularAndQuasi}$ we find $W \in \Gr_{1,s}$ with $0 \neq \Hom(X_W,\inf(M))$.
\end{proof}

\subsection{Numerous components lying in $\CSR_d$.}\label{SectionConstant}

\noindent In this section we use the Simplification methdod to contruct a family of regular components, such that every vertex in such a regular component corresponds to a module in $\CSR_d$. By the next result it follows that $\cX \subseteq \overline{\mathfrak{X}}_{d,r}$ implies $\cE(\cX) \subseteq \overline{\mathfrak{X}}_{d,r}$.

\begin{Lemma}\cite[1.9]{Ker2}\label{Filtration}
Let $X,Y$ be modules with $\Hom(X,Y)$ non-zero. If $X$ and $Y$ have filtrations $X = X_0 \supset X_1 \supset \cdots \supset X_r \supset X_{r+1} = 0, Y = Y_0 \supset Y_1 \supset \cdots \supset Y_s \supset Y_{s+1} = 0$, then there are $i,j$ with $\Hom(X_i/X_{i+1},Y_j/Y_{j+1}) \neq 0$.
\end{Lemma}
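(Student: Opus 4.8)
The plan is to extract from a fixed nonzero homomorphism $f \in \Hom(X,Y)$ a single subquotient of $X$ and a single subquotient of $Y$ between which $f$ induces a nonzero map, by descending through the two filtrations one at a time. The whole statement is a formal fact about filtered modules, so I expect to use nothing beyond the module structure.

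First I would use the filtration of $X$ to locate the deepest layer on which $f$ is still nonzero. Since $f(X_0) = f(X) \neq 0$ while $f(X_{r+1}) = f(0) = 0$, there is a largest index $i$ with $f(X_i) \neq 0$; for this $i$ one has $X_{i+1} \subseteq \ker f$, so the restriction $f|_{X_i}$ factors through the projection $X_i \to X_i/X_{i+1}$ and yields an induced homomorphism $\bar f \colon X_i/X_{i+1} \to Y$ which is nonzero precisely because $f(X_i) \neq 0$.

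Next I would use the filtration of $Y$ to localize the image of $\bar f$. Because $\im \bar f \subseteq Y_0 = Y$ is nonzero and $Y_{s+1} = 0$, there is a largest index $j$ with $\im \bar f \subseteq Y_j$, and then $\im \bar f \not\subseteq Y_{j+1}$. Composing $\bar f$ with the canonical projection $Y_j \to Y_j/Y_{j+1}$ therefore gives a map $X_i/X_{i+1} \to Y_j/Y_{j+1}$ whose image is $(\im \bar f + Y_{j+1})/Y_{j+1}$, and this is nonzero exactly because $\im \bar f \not\subseteq Y_{j+1}$. This produces the required pair $(i,j)$ with $\Hom(X_i/X_{i+1},\, Y_j/Y_{j+1}) \neq 0$.

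I do not anticipate a genuine obstacle here; the argument is essentially bookkeeping. The only points requiring care are the two existence-of-index claims and the two nonvanishing checks: that the layer index $i$ and the localization index $j$ genuinely exist (guaranteed by the boundary values $f(X) \neq 0$, $f(0)=0$ on the source side and $\im \bar f \neq 0$, $Y_{s+1}=0$ on the target side), and that passing to a subquotient in each step preserves nonvanishing rather than accidentally killing the map. Keeping the directions of the two reductions straight—cutting down the \emph{source} by a quotient in the first step and the \emph{target} by a subquotient in the second—is the one place where a sign-of-indexing slip could occur.
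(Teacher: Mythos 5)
Your argument is correct, and it is the standard proof of this fact: the paper itself gives no proof but cites it from Kerner (\cite[1.9]{Ker2}), where essentially this same two-step reduction (first pick the largest $i$ with $f(X_i)\neq 0$ to induce a nonzero map $X_i/X_{i+1}\to Y$, then the largest $j$ with the image contained in $Y_j$ to land nontrivially in $Y_j/Y_{j+1}$) is used. Both existence-of-index claims and both nonvanishing checks are handled correctly, so there is nothing to add.
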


\noindent For a regular module $M \in \cK_r$ denote by $\cC_M$ the regular component that contains $M$.

\begin{proposition}\label{NumberComponents}
Let $1 \leq d < r$ and $\cX$ be a family of pairwise orthogonal bricks in $\overline{\mathfrak{X}}_{d,r}$. Then
\[\varphi \colon \ind \cE(\cX) \to \cR;  M \mapsto \cC_M\]
is an injective map such that for each component $\cC$ in $\im \varphi$ we have $\cC \subseteq \CSR_d$. Here $\ind \cE(\cX)$ denotes the category of a chosen set of representatives of non-isomorphic indecomposable objects of $\modd \cK_r$ in $\cE$.
\end{proposition}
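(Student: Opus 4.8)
The plan is to establish two separate claims: first that $\varphi$ is injective, and second that every component in the image lies in $\CSR_d$. The injectivity is essentially immediate from the earlier structural results. By Proposition~\ref{Simplification}$(a)$ every module in $\cE(\cX)$ is regular, and by part $(b)$ every regular component contains at most one indecomposable module of $\cE(\cX)$. Thus if $M, M' \in \ind \cE(\cX)$ satisfy $\cC_M = \cC_{M'}$, they lie in the same regular component and must coincide. This gives injectivity of $\varphi$ directly, with no further work required, provided one checks the hypotheses of Proposition~\ref{Simplification} are met, namely that $\cX$ consists of pairwise orthogonal bricks with self-extensions. Orthogonality and the brick property are assumed, and the self-extension property follows since every element of $\cX$ is regular (being in $\overline{\mathfrak{X}}_{d,r}$, hence regular by the remark following the definition of $\overline{\mathfrak{X}}_{d,r}$), so Corollary~\ref{Extgeq2} gives $\dim_k \Ext(M,M) \geq 2$.

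The substance of the proof lies in showing $\cC \subseteq \CSR_d$ for each $\cC$ in the image. The strategy I would pursue is to reduce this to Lemma~\ref{Finalall}, which states that if some quasi-simple module of a component lies in $\overline{\mathfrak{X}}_{d,r}$, then the entire component lies in $\CSR_d$. So it suffices to show that each indecomposable $N \in \cE(\cX)$ is quasi-simple and lies in $\overline{\mathfrak{X}}_{d,r}$. Quasi-simplicity is exactly Proposition~\ref{Simplification}$(c)$. For membership in $\overline{\mathfrak{X}}_{d,r}$, I would invoke the filtration result Lemma~\ref{Filtration} together with the hint stated just before the proposition: since $\cX \subseteq \overline{\mathfrak{X}}_{d,r}$, any $N \in \cE(\cX)$ has an $\cX$-filtration with all subquotients in $\overline{\mathfrak{X}}_{d,r}$, and I claim this forces $N \in \overline{\mathfrak{X}}_{d,r}$.

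The key step, and the one requiring genuine care, is verifying that $\overline{\mathfrak{X}}_{d,r}$ is closed under the relevant extensions via Lemma~\ref{Filtration}. Concretely, I must show $\Hom(X_U, N) = 0$ and $\Hom(N, X_U) = 0$ for all $U \in \Gr_{d,r}$. Take the first condition and suppose $\Hom(X_U, N) \neq 0$. The module $X_U$ is a brick (it is elementary, so $\End(X_U) = k$ by Lemma~\ref{CompElementary}) and I can regard it with the trivial one-step filtration, while $N$ carries its $\cX$-filtration $N = N_0 \supset N_1 \supset \cdots$ with subquotients $N_j/N_{j+1} \in \cX$. Applying Lemma~\ref{Filtration} produces an index $j$ with $\Hom(X_U, N_j/N_{j+1}) \neq 0$. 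But $N_j/N_{j+1} \in \cX \subseteq \overline{\mathfrak{X}}_{d,r} \subseteq \mathfrak{X}^\perp_{d,r}$, so $\Hom(X_U, N_j/N_{j+1}) = 0$, a contradiction. The symmetric argument, using $N$ as the filtered module on the left and applying Lemma~\ref{Filtration} with $\Hom(N, X_U)$, gives $\Hom(N, X_U) = 0$ from $\cX \subseteq {}^\perp\mathfrak{X}_{d,r}$.

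Having established $N \in \overline{\mathfrak{X}}_{d,r}$ and that $N$ is quasi-simple, Lemma~\ref{Finalall} immediately yields that the whole component $\cC_N = \varphi(N)$ lies in $\CSR_d$. The main obstacle I anticipate is purely one of bookkeeping with Lemma~\ref{Filtration}: one must apply it in both directions (for the right and left orthogonality conditions separately) and correctly identify which filtration plays which role, taking care that $X_U$ itself is treated as a trivially filtered brick so that the conclusion lands on a single subquotient $N_j/N_{j+1}$ of $\cX$. No delicate estimates or new constructions are needed beyond this; the argument is a clean assembly of the cited lemmas.
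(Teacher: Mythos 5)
Your proof is correct and follows essentially the same route as the paper: Proposition~\ref{Simplification} for regularity, quasi-simplicity and injectivity, Lemma~\ref{Filtration} applied to the $\cX$-filtration to get $\Hom(X_U,N)=0=\Hom(N,X_U)$, and Lemma~\ref{Finalall} to conclude $\cC_N\subseteq\CSR_d$. Your explicit verification of the self-extension hypothesis of \ref{Simplification} via Corollary~\ref{Extgeq2} is a detail the paper leaves implicit, but it is the right justification.
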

\begin{proof}
Since each module in $\overline{\mathfrak{X}}_{d,r}$ is regular, $\ref{Simplification}$ implies that every module $N \in \ind \cE(\cX)$ is contained in a regular component $\cC_M$ and is quasi-simple. By $\ref{Filtration}$ the module $N$ satisfies $\Hom(X_U,N) = 0 = \Hom(N,X_U)$ for all $U \in \Gr_{d,r}$. But now $\ref{Finalall}$ implies that every module in $\cC_M$ has constant $d$-socle rank. The injectivity of $\varphi$ follows immediatly from $\ref{Simplification}$.
\end{proof}

\begin{corollary}\label{alotofcomponents}
There exists an infinite set $\Omega$ of regular components such that for all $\cC \in \Omega$ 
\begin{enumerate}[topsep=0em, itemsep= -0em, parsep = 0 em, label=$(\alph*)$]
\item $\cW(\cC) = 0$, in particular every module in $\cC$ has constant rank and
\item $\cC$ does not contain any bricks.
\end{enumerate}
\end{corollary}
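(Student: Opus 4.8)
The plan is to manufacture the desired components by a single application of the Simplification process to one carefully chosen brick, and then to read off both properties $(a)$ and $(b)$ from the position of the resulting quasi-simple modules inside their components. First I would invoke Example \ref{twoexamples}$(2)$: it furnishes a regular component whose bottom quasi-simple $M := M_\cC$ satisfies $\End(M) = k$ and $\cW(\cC) = 0$, and the chain of equivalences there gives $M \in \overline{\mathfrak{X}}_{1,r}$. By Corollary \ref{Extgeq2} the regular module $M$ has self-extensions, $\dim_k \Ext(M,M) \geq 2$. Thus $\cX := \{M\}$ is a one-element family of pairwise orthogonal bricks with self-extensions lying in $\overline{\mathfrak{X}}_{1,r}$, which is exactly the input required by Propositions \ref{Simplification} and \ref{NumberComponents} with $d = 1$.

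Next I would extract infinitely many \emph{non-brick} indecomposables from $\cE(\cX)$. By the proof of Proposition \ref{Simplification} there is an equivalence $\cE(\{M\}) \cong \modd k\langle\langle X_1,\ldots,X_t\rangle\rangle$ with $t \geq 2$, under which $M$ corresponds to the simple module. For each $n \geq 2$ the $R$-module $V_n$ on which $X_1$ acts as a single nilpotent Jordan block and $X_2,\ldots,X_t$ act as $0$ is indecomposable with $\End_R(V_n) \cong k[X]/(X^n) \neq k$; these are pairwise non-isomorphic. Let $N_n \in \cE(\{M\})$ be the corresponding module. By Proposition \ref{Simplification} each $N_n$ is regular, quasi-simple in $\modd \cK_r$, and $\End_{\cK_r}(N_n) = \End_{\cE(\{M\})}(N_n) \neq k$, so $N_n$ is \emph{not} a brick; by Lemma \ref{Filtration} (as in Proposition \ref{NumberComponents}) it satisfies $N_n \in \overline{\mathfrak{X}}_{1,r}$. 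The map $\varphi$ of Proposition \ref{NumberComponents} is injective, so the components $\cC_n := \cC_{N_n}$ are pairwise distinct, and that proposition already yields $\cC_n \subseteq \CSR_1 = \CR$.

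It remains to upgrade $\cC_n \subseteq \CR$ to $\cW(\cC_n) = 0$ and to verify brick-freeness. For the width, I would show $N_n = M_{\cC_n}$ and then quote Example \ref{twoexamples}$(2)$. Since $N_n \in \mathfrak{X}^\perp_{1,r}$, Proposition \ref{HomChar} gives $N_n \in \ESP_1 \cap \cC_n = (M_{\cC_n}\to)$, whence $N_n = \tau^{-j} M_{\cC_n}$ for some $j \geq 0$. For the other condition, a short duality computation using $X_U \cong D\tau X_U$ (so $DX_U \cong \tau X_U$) and $\tau^{-1}D = D\tau$ gives $\Hom(N_n,X_U) \cong \Hom(X_U, D\tau N_n)$; hence $N_n \in {}^\perp\mathfrak{X}_{1,r}$ forces $D\tau N_n \in \ESP_1$, i.e. $\tau N_n \in \ERP_1$ by Lemma \ref{Dual}. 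Thus $\tau N_n = \tau^{1-j} M_{\cC_n}$ is a quasi-simple predecessor of $W_{\cC_n} = \tau^{\cW(\cC_n)+1} M_{\cC_n}$, so $\tau^{1-j} M_{\cC_n} = \tau^{m+\cW(\cC_n)+1} M_{\cC_n}$ for some $m \geq 0$. Comparing exponents gives $j + m + \cW(\cC_n) = 0$ with all summands in $\NN_0$, forcing $j = \cW(\cC_n) = 0$. Therefore $N_n = M_{\cC_n}$ and $\cW(\cC_n) = 0$, which by Example \ref{twoexamples}$(2)$ also re-confirms $\cC_n \subseteq \CR$.

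Finally, brick-freeness follows cleanly: every quasi-simple module of $\cC_n$ is of the form $\tau^i N_n$, and since $\tau$ induces an equivalence between regular modules we have $\End(\tau^i N_n) \cong \End(N_n) \neq k$ for all $i \in \ZZ$. As every regular brick is quasi-simple (\cite[9.2]{Ker3}, used already in Proposition \ref{Simplification}), a brick in $\cC_n$ would be some $\tau^i N_n$, contradicting $\End(\tau^i N_n)\neq k$; hence $\cC_n$ contains no bricks. Setting $\Omega := \{\cC_n \mid n \geq 2\}$ then gives an infinite set of regular components with properties $(a)$ and $(b)$. I expect the main obstacle to be exactly the middle step, namely promoting the constant-rank conclusion of Proposition \ref{NumberComponents} to the sharper statement $\cW(\cC_n) = 0$; the bookkeeping identifying $N_n$ with $M_{\cC_n}$ via the two opposite cones, together with the non-negativity $\cW(\cC) \in \NN_0$, is the delicate point, whereas the brick-freeness argument is essentially formal once the $\tau$-invariance of endomorphism rings is in hand.
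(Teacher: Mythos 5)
Your proof is correct and follows the same overall strategy as the paper: start from a regular component with $\cW(\cC)=0$ whose quasi-simple $M_\cC$ is a brick (Example \ref{twoexamples}$(2)$), apply the Simplification process to $\cX=\{M_\cC\}$, and use Proposition \ref{NumberComponents} to spread the indecomposables of $\cE(\cX)$ over pairwise distinct regular components contained in $\CR$. The two points where you diverge are both local. For brick-freeness the paper does not pass to the explicit model $\modd k\langle\langle X_1,\ldots,X_t\rangle\rangle$; it observes directly that any indecomposable $N\in\cE(\cX)$ with an $\{M\}$-filtration of length $l\geq 2$ carries the non-zero, non-injective endomorphism $N\twoheadrightarrow N_l/N_{l-1}=M=N_1\hookrightarrow N$, so \emph{every} indecomposable of $\cE(\cX)$ other than $M$ itself is a non-brick and $\Omega:=\im\varphi\setminus\{\cC_M\}$ is already infinite; your Jordan-block construction achieves the same end and is equally valid, just less economical. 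For property $(a)$ you are actually more careful than the paper: the paper relies on $N\in\overline{\mathfrak{X}}_{1,r}$ together with Example \ref{twoexamples}$(2)$ without spelling out why $N=M_{\cC_N}$, whereas your cone-matching argument (placing $N$ in $(M_{\cC_N}\to)$ and $\tau N$ in $(\to W_{\cC_N})$ and forcing $j=m=\cW(\cC_N)=0$ from non-negativity) supplies exactly the bookkeeping the paper leaves implicit, and it is correct.
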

\begin{proof}
Let $\cC$ be a regular component that contains a brick and $\cW(\cC) = 0$. Let $M := M_\cC$, then $M \in \overline{\mathfrak{X}}_{1,r}$. Apply $\ref{NumberComponents}$  with $\cX = \{M\}$ and set $\Omega := \im \varphi \setminus \{ \cC_M\}$. Let $N \in \cE(\cX) \setminus \{M\}$ be indecomposable. $N$ is quasi-simple in $\cC_N$ and has a $\{M\}$-filtration $0 = N_0 \subset \ldots \subset N_l = N$ with $l \geq 2$ and $N_1 = M = N_l/N_{l-1}$. Hence 
\[ N \to N_l / N_{l-1} \to N_1 \to N\]
is a non-zero homomorphism that is not injective. Therefore $N$ is not a brick. This finishes the proof, since every regular brick in $\modd \cK_r$ is quasi-simple \cite[9.2]{Ker3} and $\End(\tau^l N) \cong \End(N) \neq k$ for all $l \in \ZZ$.
\end{proof}

\noindent Now we apply our results on the Simplification method to modules $E_d$ constructed in $\ref{NOstable}$.

\begin{Definition}\cite[3.6]{CFP1}
Denote with $\GL_{r}$ the group of invertible $r \times r$-matrices which acts on $\bigoplus^r_{i=1} k\gamma_i$ via $g.\gamma_j = \sum^r_{i=1} g_{ij} \gamma_i$ for $1 \leq j \leq r$, $g \in \GL_{r}$. For $g \in \GL_r$, let $\varphi_g \colon \cK_r \to \cK_r$ the algebra homomorphism with $g.e_1 = e_1$, $g.e_2 = e_2$ and $\varphi_g(\gamma_i) = g.\gamma_i$, $1 \leq i \leq r$. For a $\cK_r$-module $M$ denote the pullback of $M$ along $\varphi_g$ by $M^{(g)}$. The module $M$ is called $\GL_r$-stable if $M^{(g)} \cong M$ for all $g \in \GL_r$.
\end{Definition}

\begin{Theorem}
Let $2 \leq d < r$, then there exists a wild full subcategory $\cE \subseteq \modd \cK_r$ and an injection
\[ \varphi_d \colon \ind \cE \to \cR; M \mapsto \cC_M ,\]
such that for each component $\cC$ in $\im \varphi_d$ we have $\cC \subseteq \CSR_d$ and no module in $\cC$ is $\GL_r$-stable. 
\end{Theorem}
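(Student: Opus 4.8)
The plan is to combine the Simplification machinery from Proposition~\ref{NumberComponents} with the special modules $E_d$ produced in Lemma~\ref{NOstable}. The key point is that Lemma~\ref{NOstable} gives, for each $2 \leq d < r$, a quasi-simple brick $E_d \in \overline{\mathfrak{X}}_{d,r}$ that additionally fails to be $\GL_r$-stable, and the Simplification process preserves both of these features along the component.

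First I would set $\cX := \{E_d\}$, a single brick. By Lemma~\ref{NOstable}(a),(b) the module $E_d$ is a brick lying in $\overline{\mathfrak{X}}_{d,r}$, and since it is regular it has self-extensions by Corollary~\ref{Extgeq2} (as $r \geq 3$). Thus $\cX$ is a class of pairwise orthogonal bricks with self-extensions, so Proposition~\ref{Simplification} applies: the category $\cE := \cE(\cX)$ is a wild full subcategory of $\modd \cK_r$, all of whose indecomposables are quasi-simple regular modules. Applying Proposition~\ref{NumberComponents} with this $\cX \subseteq \overline{\mathfrak{X}}_{d,r}$ yields the injective map $\varphi_d \colon \ind \cE \to \cR$, $M \mapsto \cC_M$, such that every component $\cC$ in $\im \varphi_d$ satisfies $\cC \subseteq \CSR_d$. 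This already delivers wildness and the constant $d$-socle rank statement; what remains is the non-$\GL_r$-stability.

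The main obstacle will be showing that no module in any component $\cC \in \im\varphi_d$ is $\GL_r$-stable. The idea is to use part (c) of Lemma~\ref{NOstable}: there exist $V, W \in \Gr_{1,r}$ with $\Hom(X_V, E_d) = 0 \neq \Hom(X_W, E_d)$, i.e.\ $E_d \notin \CSR_1$. I would argue that a $\GL_r$-stable module must lie in $\CSR_1 = \CSR$, since the $\GL_r$-action permutes the subspaces $U \in \Gr_{1,r}$ transitively and $\GL_r$-stability forces $\dim_k \Hom(X_U, M)$ to be independent of $U$. More precisely, for $g \in \GL_r$ one checks that $\Hom(X_{g.U}, M) \cong \Hom(X_U, M^{(g)})$, so if $M^{(g)} \cong M$ for all $g$ then $\dim_k \Hom(X_U,M)$ is constant in $U$, i.e.\ $M \in \CSR_1$. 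Hence it suffices to show that no module of a component $\cC \in \im\varphi_d$ lies in $\CSR_1$.

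To establish this last point I would use that each $\cC \in \im\varphi_d$ contains a distinguished quasi-simple indecomposable $N \in \ind\cE$ obtained from an $\{E_d\}$-filtration, and invoke Lemma~\ref{Finalall} together with Example~\ref{twoexamples}(2): a component lies in $\CSR_1$ precisely when $\cW(\cC) = 0$, which by the displayed equivalences there amounts to $M_\cC \in \overline{\mathfrak{X}}_{1,r}$. Since $E_d \notin \CSR_1$ by Lemma~\ref{NOstable}(c), the component $\cC_{E_d}$ is not contained in $\CSR_1$, so $\cW(\cC_{E_d}) \neq 0$; and by the Euler--Ringel argument of Lemma~\ref{Finalall} the dimension $\dim_k \Hom(X_U, \tau^l E_d)$ varies with $U$ along the whole component, so no module in $\cC_{E_d}$ has constant $1$-socle rank. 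For the other components in $\im\varphi_d$ (the images of $N \neq E_d$) the same reasoning applies once one knows $N \notin \CSR_1$; here I would transport the inequality $\Hom(X_V, E_d) = 0 \neq \Hom(X_W, E_d)$ through the filtration using Lemma~\ref{Filtration} to conclude $\Hom(X_W, N) \neq 0$ while controlling $\Hom(X_V, N)$, so that $N \notin \CSR_1$ as well. Combining these, every component in $\im\varphi_d$ avoids $\CSR_1$, hence contains no $\GL_r$-stable module, completing the proof.
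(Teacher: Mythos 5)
Your setup is exactly the paper's: take $\cX = \{E_d\}$ with $E_d$ from Lemma \ref{NOstable}, apply \ref{Simplification} and \ref{NumberComponents} to get the wild category $\cE(\cX)$ and the injection $\varphi_d$ with $\im\varphi_d \subseteq \CSR_d$, and then use the filtration lemma \ref{Filtration} together with the submodule $E_d = Y_1 \subseteq M$ to get $\Hom(X_V,M) = 0 \neq \Hom(X_W,M)$, so that each indecomposable $M \in \cE(\cX)$ fails to have constant $1$-socle rank and hence is not $\GL_r$-stable (the paper cites \cite[3.6]{CFP1} for the implication you reconstruct via $\Hom(X_{g.U},M) \cong \Hom(X_U,M^{(g)})$). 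Up to this point your argument is correct and coincides with the paper's.

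The gap is in your final step, where you pass from ``the distinguished module $M \in \ind\cE(\cX)$ is not in $\CSR_1$'' to ``no module in $\cC_M$ is $\GL_r$-stable.'' Your route is to show that the entire component $\cC_M$ avoids $\CSR_1$, but that is false: by Proposition \ref{HomTau}(b) the set $\ESP_1 \cap \cC$ is a non-empty cone in every regular component $\cC$, and every module in $\ESP_1$ trivially has constant $1$-socle rank (its $U$-socle equals $M_2$ for every $U$). Indeed Lemma \ref{lLemma} makes the same point: even when a quasi-simple module fails constant $d$-socle rank, the modules in the two adjacent cones do have it. So constant $1$-socle rank cannot be excluded componentwise, and the equivalence you extract from Example \ref{twoexamples}(2) is only the implication $\cW(\cC)=0 \Rightarrow \cC \subseteq \CSR_1$, not a characterization of which modules of $\cC$ lie in $\CSR_1$. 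What the paper actually uses here is an external input you are missing: by \cite[2.2]{Far1}, $\GL_r$-stability is a property that propagates along an AR-component, so the failure of $\GL_r$-stability for the single quasi-simple module $M$ already forces every module of $\cC_M$ to be non-$\GL_r$-stable. Without that result (or a substitute argument that the $\GL_r$-twist $(-)^{(g)}$ commutes with $\tau$ and with irreducible maps, hence acts on whole components), your proof does not reach the stated conclusion.
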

\begin{proof}
Fix $2 \leq d < r$ and let $E_d$ as in $\ref{NOstable}$ with $V,W \in \Gr_{1,r}$ and $\Hom(X_V,E_d) = 0 \neq \Hom(X_W,E_d)$. Set $\cX :=\{E_d\}$ and let $M \in \cE(\cX)$. By $\ref{NumberComponents}$ we get an injective map 
\[ \varphi_d \colon \ind \cE(\cX) \to \cR; M \to \cC_M\]
such that each component $\cC$ in $\im \varphi_d$ satisfies $\cC \subseteq \CSR_d$. \\
Moreover $\cE(\cX)$ is a wild full subcategory of $\modd \cK_r$ by $\ref{Simplification}$. Let $M \in \cE(\cX)$ be indecomposable. Then $M$ has a filtration $0 = Y_0 \subset Y_1 \subset \cdots \subset Y_m$ with $Y_l/Y_{l-1} = E_d$ for all $1 \leq l \leq m$. By $\ref{Filtration}$ we have $0 = \Hom(X_U,M)$ and since $E_d = Y_1 \subseteq  M$, we conclude $0 \neq \Hom(X_W,M)$. This proves that $M$ does not have constant $1$-socle rank. Therefore $\cC_M$ contains a module that is not of constant $1$-socle rank. 
By \cite[3.6]{CFP1} the module $M$ is not $\GL_r$-stable and by \cite[2.2]{Far1} no module in the component is $\GL_r$-stable. 
\end{proof}

\subsection{Components lying almost completely in $\CSR_d$}

\noindent The following Definition and two Lemma are a generalization of \cite[4.7, 4.13]{Wor3} and \cite[3.7]{Wor1}. We omit the proofs.

\begin{Definition}
Let $M$ be an indecomposable $\cK_r$-module, $1 \leq d < r$ and $U \in \Gr_{d,r}$. $M$ is called $U$-trivial if $\dim_k \Hom(X_U,M) = \dim_k M_1$. 
\end{Definition}
Note that the sequence 
$0 \to P^{r-d}_1 \to P_2 \to X_U \to 0$ and left-exactness of $\Hom(-,M)$ imply that $\dim_k \Hom(X_U,M) \leq \dim_k M_1$. 

\begin{Lemma}\label{Utrivial}
Let $M$ be a regular $U$-trivial module. Then either $\Ext(X_V,\tau M) = 0 = \Hom(X_V,\tau^{-1} M)$ for all $V \in \Gr_{d,r}$ or $M$ is isomorphic to $X_U$ or $\tau X_U$.
\end{Lemma}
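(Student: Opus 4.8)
The plan is to analyze when a regular $U$-trivial module $M$ fails the orthogonality condition $\Ext(X_V,\tau M)=0=\Hom(X_V,\tau^{-1}M)$ for all $V\in\Gr_{d,r}$, and to show that any failure forces $M\cong X_U$ or $M\cong\tau X_U$. First I would translate both conditions into statements about homomorphisms between the test-modules $X_V$ and $M$ using the Auslander-Reiten formula from the excerpt: $\Ext(X_V,\tau M)\cong D\Hom(M,X_V)$ (via $\Ext(X,Y)\cong D\Hom(Y,\tau X)$ applied appropriately), and likewise $\Hom(X_V,\tau^{-1}M)\cong \Ext(X_V,M)^{\!*}$-type reformulations. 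The upshot is that the orthogonality condition says precisely $M\in\overline{\mathfrak X}_{d,r}$ up to a $\tau$-shift, i.e. $M$ lies in the cones described by Propositions \ref{Cone1} and \ref{Cone2}. So the dichotomy becomes: either $M$ lies in the common orthogonal region, or $M$ is one of the two exceptional test-modules.

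**Exploiting $U$-triviality.**
Next I would extract the force of the $U$-trivial hypothesis. By the remark following the definition, $\dim_k\Hom(X_U,M)\le\dim_k M_1$ always, with equality defining $U$-triviality. Applying $\Hom(-,M)$ to $0\to P_1^{r-d}\to P_2\to X_U\to 0$ gives the four-term exact sequence as in the proof of \ref{HomChar}, and equality $\dim_k\Hom(X_U,M)=\dim_k M_1$ forces the connecting map $\overline T^{\,*}$ to vanish, hence $\Ext(X_U,M)\cong\Hom(P_1^{r-d},M)$ has the maximal possible dimension $(r-d)\dim_k M_2$. This says $M$ is ``as far as possible'' from being in $\mathfrak X_{d,r}^\perp$ at the index $U$. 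The key consequence I would aim for is that $U$-triviality makes $M$ behave, with respect to $X_U$, like a successor of $X_U$ in its component — so either $M$ already equals $X_U$ (quasi-length one) or there is a nonzero map relating $M$ to $X_U$ forcing $M$ into the elementary orbit.

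**The main argument.**
The core step is to assume the orthogonality fails for some $V$, say $\Hom(M,X_V)\ne 0$ (the other alternative being handled dually, using $D X_U=\tau X_U$ from the cited \cite[3.1]{Wor1}). Since $X_V$ is elementary and $M$ is regular, Lemma \ref{CompElementary} and the structure of quotients of elementary modules (Lemma \ref{LemmaQuotient}) sharply constrain $\im(M\to X_V)$: it is a regular quotient, hence $M$ surjects onto $X_V$ or onto a proper regular quotient. Combining this with $U$-triviality — which pins down $\dim_k\Hom(X_U,M)=\dim_k M_1$ and hence constrains $\dimu M$ via the Euler-Ringel form — I expect to deduce $\dimu M=(1,r-d)$, whence Proposition \ref{dim}(a) gives $M\cong X_{U'}$ for some $U'\in\Gr_{d,r}$, and then the $U$-triviality condition itself selects $U'=U$. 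The $\tau X_U$ alternative arises symmetrically from the $\Hom(X_V,\tau^{-1}M)\ne 0$ branch together with $DX_U\cong\tau X_U$.

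**The main obstacle.**
The hard part will be showing that failure of orthogonality, combined with $U$-triviality, actually forces the dimension vector down to $(1,r-d)$ rather than merely producing a nonzero map. A nonzero morphism to or from an elementary module does not by itself bound $\dim_k M$; one must use the $U$-trivial equality to rigidify the situation, presumably by computing $\langle\dimu X_V,\dimu M\rangle$ two different ways and comparing with the maximality forced by $U$-triviality. I would expect the cleanest route is to show $M$ and $X_U$ lie in the same component and have the same quasi-length one, using that regular bricks are quasi-simple \cite[9.2]{Ker3} together with the cone structure from \ref{Cone1} and \ref{Cone2}; the delicate bookkeeping is ensuring the two exceptional cases are exactly $X_U$ and $\tau X_U$ and no other $\tau$-power survives.
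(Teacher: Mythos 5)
The paper does not actually prove Lemma \ref{Utrivial}: the subsection states that the definition and the two lemmas are generalizations of results of Worch and that the proofs are omitted. So there is no in-paper argument to compare yours against, and your proposal has to be judged on its own terms. Judged that way, it has two genuine gaps.

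\textbf{The dual branch.} You propose to handle the case $\Hom(X_V,\tau^{-1}M)\neq 0$ ``dually, using $DX_U=\tau X_U$''. But the Remark following Proposition \ref{HomChar} states explicitly that $D\tau X_U\cong X_U$ (equivalently $DX_U\cong\tau X_U$) holds if and only if $d=1$; indeed for $d\ge 2$ one has $\dimu\tau X_U=(rd-1,d)\neq(r-d,1)=\dimu DX_U$. Since the lemma is invoked in \ref{corlift} precisely for $d=1+s-r$ arbitrary, this reduction is unavailable, and your sketch contains no substitute argument. Note also that your reformulation of the second condition is off: $D\Ext(X_V,M)\cong\Hom(\tau^{-1}M,X_V)$ concerns maps \emph{out of} $\tau^{-1}M$, whereas the condition in the lemma is $\Hom(X_V,\tau^{-1}M)\cong\Hom(\tau X_V,M)$, i.e.\ maps \emph{into} $M$ from the different elementary family $\{\tau X_V\}$. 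This branch needs its own argument, and it is the one responsible for the exceptional case $M\cong\tau X_U$.

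\textbf{The force of $U$-triviality.} You never extract the one concrete consequence of $U$-triviality that makes the first branch work. Equality $\dim_k\Hom(X_U,M)=\dim_k M_1$ in the four-term sequence forces $\overline T^{\,*}=0$, which (since $\im x^M_T\subseteq M_2^d$ and $\Hom(P_2,M)\cong M_1$) says precisely that $x^M_u=0$ for every $u\in U$, i.e.\ $\Soc_U(M)=M$ and $\Rad_U(M)=0$. This is what ``selects $U'=U$'': if $\Hom(M,X_V)\neq 0$, then $M$ surjects onto $X_V$ (as you argue), the quotient inherits $x_u=0$ for all $u\in U$, and on $X_V=P_2/\im\overline S$ the operator $x_u$ annihilates the generator $e_1+\im\overline S$ iff $u\in V$; hence $U\subseteq V$ and so $U=V$. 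It also places $M$, up to the $\GL_r$-action, in the essential image of $\inf^r_{r-d}$, which gives a clean finish: the surjection $M\twoheadrightarrow X_U$ pulls back to a surjection of an indecomposable $\cK_{r-d}$-module onto the projective with dimension vector $(1,r-d)$, forcing $M\cong X_U$. Your proposed substitutes --- computing $\langle\dimu X_V,\dimu M\rangle$ two ways, or placing $M$ and $X_U$ in the same component with equal quasi-length --- do not close the gap: over a wild hereditary algebra a nonzero map between regular modules neither bounds $\dimu M$ nor forces the two modules into one component, and you acknowledge but do not resolve this. (A minor slip, inherited from a typo in the paper's own note: the kernel of $P_2\to X_U$ is $P_1^{d}$, not $P_1^{r-d}$, so the cokernel term is $\Hom(P_1^d,M)$ of dimension $d\dim_k M_2$.)
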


\begin{Lemma}\label{lLemma}
Let $M$ be regular quasi-simple in a regular component $\cC$ such that $\Ext(X_U,\tau M) = 0 = \Hom(X_U,\tau^{-1} M)$ for all $U \in \Gr_{d,r}$. If $M$ does not have constant $d$-socle rank, then a module $X$ in $\cC$ has constant $d$-socle rank if and only if $X$ is in $(\to \tau M) \cup (\tau^{-1} M \to)$.
\end{Lemma}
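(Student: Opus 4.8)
The plan is to work throughout with the homological reformulation of Proposition~\ref{HomChar}, namely that $X\in\CSR_d$ if and only if $U\mapsto\dim_k\Hom(X_U,X)$ is constant on $\Gr_{d,r}$, and to couple it with the Euler--Ringel form. Since every test module $X_U$ has the \emph{fixed} dimension vector $(1,r-d)$, the integer $\langle(1,r-d),\dimu X\rangle=\dim_k\Hom(X_U,X)-\dim_k\Ext(X_U,X)$ does not depend on $U$; hence $X\in\CSR_d$ if and only if $\dim_k\Ext(X_U,X)$ is independent of $U$. First I would locate the two cones. By the Auslander--Reiten formula the hypothesis $\Hom(X_U,\tau^{-1}M)=0$ for all $U$ says $\tau^{-1}M\in\ESP_d$, while $\Ext(X_U,\tau M)=0$ for all $U$ says $\tau M\in\cT(\mathfrak{X}_{d,r})$ in the notation of Proposition~\ref{Cone1}. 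By Proposition~\ref{Cone2} the set $\ESP_d\cap\cC$ is the successor cone of a quasi-simple module, and by Proposition~\ref{Cone1} the set $\cT(\mathfrak{X}_{d,r})\cap\cC$ is the predecessor cone of a quasi-simple module. Because $\ESP_d\subseteq\CSR_d$ and, by the Euler-form remark above and exactly as in Lemma~\ref{Finalall}, also $\cT(\mathfrak{X}_{d,r})\subseteq\CSR_d$, the assumption $M\notin\CSR_d$ forces $M$ to lie outside both cones. Since $\tau^{-1}M$ lies in the first cone and $\tau M$ in the second, while their immediate $\tau$-translates $M$ do not, a comparison along the bottom layer of $\cC$ pins the apices down exactly: $\ESP_d\cap\cC=(\tau^{-1}M\to)$ and $\cT(\mathfrak{X}_{d,r})\cap\cC=(\to\tau M)$.

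This already gives the inclusion $(\to\tau M)\cup(\tau^{-1}M\to)\subseteq\CSR_d\cap\cC$. Indeed the successor cone equals $\ESP_d\cap\cC\subseteq\CSR_d$, and every $X\in(\to\tau M)\subseteq\cT(\mathfrak{X}_{d,r})$ satisfies $\Ext(X_U,X)=0$ for all $U$, whence $\dim_k\Hom(X_U,X)=\langle(1,r-d),\dimu X\rangle$ is independent of $U$ and $X\in\CSR_d$.

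For the converse one checks that the complement $\cC\setminus\bigl((\to\tau M)\cup(\tau^{-1}M\to)\bigr)$ is precisely the cone with apex $M$ spanned by the rays $(M[l])_{l\ge 1}$ and $((l)M)_{l\ge 1}$, i.e.\ the regular modules $X$ having the quasi-simple $M$ as a quasi-composition factor. For such $X$ I would produce $U,V\in\Gr_{d,r}$ with $\dim_k\Hom(X_U,X)\neq\dim_k\Hom(X_V,X)$. The input is $M\notin\CSR_d$: applying Proposition~\ref{Cone2} to single modules $X_W$ (so that $\{Y\mid\Hom(X_W,Y)=0\}\cap\cC=(s_W\to)$ for a quasi-simple $s_W$) and using that $\bigcap_W(s_W\to)=\ESP_d\cap\cC=(\tau^{-1}M\to)$, the non-constancy of $\dim_k\Hom(X_W,M)$ yields both a $V$ with $s_V=\tau^{-1}M$ and a $U$ with $s_U$ a strict predecessor of $\tau^{-1}M$. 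The quasi-socle inclusion $\operatorname{(quasi\text{-}soc}X)\hookrightarrow X$ together with left-exactness of $\Hom(X_V,-)$ gives a lower bound for $\dim_k\Hom(X_V,X)$, while the position of $s_U$ further back gives a matching upper bound for $\dim_k\Hom(X_U,X)$; their gap is controlled by the quasi-composition factors of $X$ lying between $s_U$ and $\tau^{-1}M$, which are present exactly because $M$ is a factor of $X$.

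I expect the genuine obstacle to be making this last comparison rigorous, that is, proving the required strict monotonicity of $\dim_k\Hom(X_U,X)$ in the position of the vanishing-cone apex $s_U$; the Euler form controls only the alternating $\Hom$–$\Ext$ difference, not the individual terms. A robust alternative on the two boundary rays is an induction on quasi-length via Lemma~\ref{Allquasi}: the almost split sequence starting at $M[l]$ has its remaining terms in $(\tau^{-1}M\to)\subseteq\CSR_d$, and the one ending at $(l)M$ has its remaining terms in $(\to\tau M)\subseteq\CSR_d$, so $M\notin\CSR_d$ propagates to $M[l]\notin\CSR_d$ and $(l)M\notin\CSR_d$ for all $l$. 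The interior of the cone, where no almost split sequence meets $\CSR_d$, is exactly where the $\Hom$-dimension comparison (rather than the two-out-of-three principle) is indispensable, and so is the crux of the argument.
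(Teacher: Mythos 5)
The paper itself omits the proof of this lemma (it refers to \cite[4.7, 4.13]{Wor3} and \cite[3.7]{Wor1}), so there is no in-text argument to compare against; judging your proposal on its own terms, the forward direction is complete and correct, but the converse has a genuine gap that you yourself flag and do not close. Your identification of the two cone apices is sound: $\ESP_d\subseteq\CSR_d$ and $\{Y\mid\forall U\colon\Ext(X_U,Y)=0\}\subseteq\CSR_d$ via the Euler--Ringel form, $M\notin\CSR_d$ excludes $M$ from both cones while $\tau^{-1}M$ and $\tau M$ lie in them, so the apices are exactly $\tau^{-1}M$ and $\tau M$; and the complement of the two cones is precisely the wedge of modules having $M$ as a quasi-composition factor. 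Your induction via Lemma~\ref{Allquasi} does propagate $M\notin\CSR_d$ along the two boundary rays $M[l]$ and $(l)M$, since in the relevant almost split sequences the remaining indecomposable summands lie in the two cones. But for an interior module $(\tau^jM)[l]$ with $1\le j\le l-2$ every term of every almost split sequence through it lies again in the wedge, so the two-out-of-three principle gives nothing, and your fallback sketch breaks down at a specific point: from the mere non-constancy of $U\mapsto\dim_k\Hom(X_U,M)$ you cannot conclude that some $X_U$ has vanishing cone apex $s_U$ strictly preceding $\tau^{-1}M$ --- that would require $\Hom(X_U,M)=0$ for some $U$, which non-constancy does not provide (the dimensions could range over $\{1,2\}$, say).

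The missing step can be closed by a filtration argument rather than a cone-position argument, and this is presumably what the cited sources do. Write $X=(\tau^jM)[l]$ with $0\le j\le l-1$ and set $h_U(Y)=\dim_k\Hom(X_U,Y)$, $e_U(Y)=\dim_k\Ext(X_U,Y)$, so that $h_U(Y)-e_U(Y)=\langle(1,r-d),\dimu Y\rangle$ is independent of $U$ and $Y\in\CSR_d$ iff either function is constant in $U$. The canonical exact sequence $0\to(\tau^jM)[j]\to X\to M[l-j]\to 0$ has left-hand term with all quasi-composition factors of the form $\tau^iM$, $i\ge 1$, on each of which $\Ext(X_U,-)$ vanishes (this is the $\tau$-closure in Proposition~\ref{Cone1}); the long exact sequence then gives $e_U(X)=e_U(M[l-j])$. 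Likewise $0\to M\to M[l-j]\to(\tau^{-1}M)[l-j-1]\to 0$ has right-hand term with all quasi-composition factors $\tau^{-i}M$, $i\ge 1$, on which $\Hom(X_U,-)$ vanishes, so $h_U(M[l-j])=h_U(M)$, which depends on $U$ by hypothesis; hence $e_U(M[l-j])$, and therefore $e_U(X)$, depends on $U$, and $X\notin\CSR_d$. Without this (or an equivalent) computation for the interior of the wedge, the ``only if'' direction of the lemma is not established.
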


\begin{corollary}\label{corlift}
Let $3 \leq r < s$, $d:=1+s-r$ and $1 \leq l \leq s -r$. Let $M$ be an indecomposable $\cK_r$-module in $\overline{\mathfrak{X}}_{d,r}$ that is not elementary. Denote by $\cC$ the regular component that contains $\inf(M)$. 
\begin{enumerate}[topsep=0em, itemsep= -0em, parsep = 0 em, label=$(\alph*)$]
\item Every module in $\cC$ has constant $d$-socle rank and
\item $N \in \cC$ has constant $l$-socle rank if and only if $N \in (\to \tau \inf(M)) \cup (\tau^{-1} \inf(M) \to)$.
\end{enumerate}
\end{corollary}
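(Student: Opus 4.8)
We need to prove: for $3 \leq r < s$, $d := 1+s-r$, and $1 \leq l \leq s-r$, given $M$ indecomposable in $\overline{\mathfrak{X}}_{d,r}$ that is not elementary, the component $\cC$ containing $\inf(M)$ satisfies (a) every module has constant $d$-socle rank, and (b) $N \in \cC$ has constant $l$-socle rank iff $N \in (\to \tau \inf(M)) \cup (\tau^{-1}\inf(M) \to)$.

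=== BEGIN PROOF PROPOSAL ===

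\begin{proof}
The plan is to deduce both parts from the machinery of Section~3.2, applied to the module $\inf(M)$. First I would establish part $(a)$ directly: by definition $d = 1+s-r$ means $d = 1 + (s-r)$, so writing $M \in \overline{\mathfrak{X}}_{1+(s-r),r}$ would not quite match the hypothesis; rather, since $M \in \overline{\mathfrak{X}}_{d,r}$ with the source algebra $\cK_r$, I would invoke Proposition~\ref{goingup}$(c)$, which says precisely that $\inf(M)$ lies in a regular component $\cC$ with $\cC \subseteq \CSR_{d+s-r}$. However the target index here is $d+s-r$, not $d$; to land on constant $d$-socle rank I would instead apply the chain in \ref{goingup} starting from an appropriate lower index. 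Concretely, since the hypothesis places $M$ in $\overline{\mathfrak{X}}_{d,r}$ and we want $\CSR_d$ on the $\cK_s$ side with the \emph{same} index $d$, I would first observe $\inf(M)$ is quasi-simple and regular by \ref{RegularAndQuasi}$(a)$, and then verify $\inf(M) \in \overline{\mathfrak{X}}_{d,s}$ so that Lemma~\ref{Finalall} gives $\cC \subseteq \CSR_d$. To get $\inf(M)\in\overline{\mathfrak X}_{d,s}$ I would reprove the two inclusions of \ref{goingup}$(a)$,$(b)$ in the index-preserving form using that $\dimu X_V = (1,r-d)$ agrees for $V\in\Gr_{d,s}$ and $U\in\Gr_{d,r}$, together with full faithfulness of $\inf$ and the lifting of submodules and quotients supplied by \ref{functorG}.

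For part $(b)$, the strategy is to apply Lemma~\ref{lLemma} with socle-rank parameter $l$ to the quasi-simple module $\inf(M)$. The two hypotheses of \ref{lLemma} that I must check are: first, that $\Ext(X_U,\tau\inf(M)) = 0 = \Hom(X_U,\tau^{-1}\inf(M))$ for all $U \in \Gr_{l,s}$; and second, that $\inf(M)$ does \emph{not} have constant $l$-socle rank. The second point follows immediately from \ref{RegularAndQuasi}$(b)$: since $1 \leq l \leq s-r$ and $M$ is not simple, $\inf(M) \notin \CSR_m$ for every $m \in \{1,\dots,s-r\}$, in particular for $m=l$. For the first point I would argue that $U$-triviality is in force: because $\inf(M)$ has $(\dimu\inf(M))_1 = \dim_k M_1$ and, via \ref{functorG} and \ref{dim}, the only indecomposables of dimension vector $(1,s-l)$ arising as quotients come from the $X_U$'s, I would show $\inf(M)$ is $U$-trivial for suitable $U \in \Gr_{l,s}$ and then invoke Lemma~\ref{Utrivial}. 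The crux is excluding the exceptional cases of \ref{Utrivial}, namely that $\inf(M) \cong X_U$ or $\inf(M) \cong \tau X_U$. This is exactly where the hypothesis that $M$ is \emph{not elementary} enters: if $\inf(M)$ were isomorphic to some $X_U$ or $\tau X_U$, then by \ref{functorG} and \ref{elementary} its preimage $M$ under $\inf$ would have to be an indecomposable of dimension vector $(c,1)$ (up to the duality), hence elementary by \ref{elementary}$(a)$, contradicting the hypothesis.

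Having ruled out the exceptional cases, Lemma~\ref{Utrivial} forces $\Ext(X_V,\tau\inf(M)) = 0 = \Hom(X_V,\tau^{-1}\inf(M))$ for all $V \in \Gr_{l,s}$, which is precisely the hypothesis of \ref{lLemma}. Lemma~\ref{lLemma} then yields that a module $N \in \cC$ has constant $l$-socle rank if and only if $N \in (\to \tau\inf(M)) \cup (\tau^{-1}\inf(M) \to)$, completing $(b)$. I expect the main obstacle to be the bookkeeping in the first step: carefully matching the socle-rank indices across the functor $\inf$ and confirming that $\inf(M) \in \overline{\mathfrak{X}}_{d,s}$ with the same $d$, rather than the shifted index $d+s-r$ that appears in \ref{goingup}$(c)$. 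The cleanest route is probably to note that $d = 1+s-r$ combined with $M \in \overline{\mathfrak{X}}_{d,r}$ is a slight abuse; the genuinely relevant input is that $M$ lies in some $\overline{\mathfrak{X}}_{d_0,r}$ with $d_0 + (s-r) = d$, and then \ref{goingup}$(c)$ applies verbatim. I would reconcile this indexing carefully before writing the final argument.
\end{proof}

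=== END PROOF PROPOSAL ===
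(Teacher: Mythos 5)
Your overall architecture matches the paper's: part $(a)$ via \ref{goingup}, and part $(b)$ by verifying the hypotheses of Lemma \ref{lLemma} through $U$-triviality, Lemma \ref{Utrivial} (with ``not elementary'' excluding the cases $\inf(M)\cong X_U,\tau X_U$) and \ref{RegularAndQuasi}$(b)$. But there is a genuine gap at the crux of $(b)$: you never actually establish that $\inf(M)$ is $U$-trivial for some $U\in\Gr_{l,s}$, i.e.\ that $\dim_k\Hom(X_U,\inf(M))=\dim_k\inf(M)_1$. The upper bound is automatic; the whole content is the lower bound, and your stated justification (``the only indecomposables of dimension vector $(1,s-l)$ arising as quotients come from the $X_U$'s'') concerns quotients and produces no homomorphisms $X_U\to\inf(M)$ at all. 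The paper's argument here is exactly the one idea your proposal is missing: since $\dimu\inf(P_2)=(1,r)=(1,s-(s-r))$ and $\inf(P_2)$ is indecomposable, Proposition \ref{dim} identifies $\inf(P_2)$ with a test module $X_W$ for some $W\in\Gr_{s-r,s}$; projectivity of $P_2$ and full faithfulness of $\inf$ give $\dim_k\Hom(X_W,\inf(M))=\dim_k\Hom(P_2,M)=\dim_k M_1=\dim_k\inf(M)_1$; and precomposing a basis with the epimorphism $X_U\twoheadrightarrow X_W$ supplied by \ref{dim}$(c)$ for any $U\in\Gr_{l,s}$ with $l\le s-r$ yields the lower bound. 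Without this (or a substitute) the appeals to \ref{Utrivial} and hence to \ref{lLemma} are unsupported.

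Two smaller points. First, in $(a)$ the ``index-preserving form'' of \ref{goingup} that you propose to reprove is false: for $V\in\Gr_{d,s}$ one has $\dimu X_V=(1,s-d)$, not $(1,r-d)$, so the shift by $s-r$ is unavoidable. Your closing remark is the correct resolution and is how the paper reads the statement (compare the example following the corollary): the relevant hypothesis is $M\in\overline{\mathfrak{X}}_{1,r}$, whence \ref{goingup}$(c)$ gives $\cC\subseteq\CSR_{1+s-r}=\CSR_d$ verbatim, which is why the paper can call $(a)$ immediate. Second, your exclusion of the exceptional cases of \ref{Utrivial} via the dimension vector of a ``preimage'' is garbled: $X_U$ for $U\in\Gr_{l,s}$ has dimension vector $(1,s-l)$ with $s-l\ge r$, which does not fall under \ref{elementary} as a $\cK_r$-dimension vector. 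The paper's route is cleaner and is the intended use of the hypothesis: exactness of $\inf$ shows that $M$ regular and not elementary forces $\inf(M)$ not elementary, while $X_U$ and $\tau X_U$ are elementary by \ref{elementary}.
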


\begin{proof}
$(a)$ is an immediate consequence of $\ref{goingup}$.\\
$(b)$ Consider the indecomposable projective module $P_2 = \cK_r e_1$ in $\modd \cK_r$. We get 
\[ \Hom(\inf(P_2),\inf(M)) \cong \Hom(P_2,M) = \dim_k M_1 = \dim_k \inf(M)_1 =: q.\]
Since $\dimu \inf(P_2) = (1,r) = (1,s-(s-r))$ we find $W \in \Gr_{s-r,s}$ with $\inf(P_2) = X_W$. Now let $1 \leq l \leq s-r$. By  $\ref{dim}$ there is $U \in \Gr_{l,s}$ and an epimorphism $\pi \colon X_U \to X_W$. 
Let $\{f_1,\ldots,f_q \}$ be a basis of $\Hom(\inf(P_2),\inf(M))$. Since $\pi$ is surjective the set $\{f_1 \pi,\ldots,f_q \pi\} \subseteq \Hom(X_U,\inf(M))$ is linearly independent. Hence $q \leq \dim_k \Hom(X_U,\inf(M)) \leq \dim_k \inf(M)_1 = q$ holds and $\inf(M)$ is $U$-trivial. \\
Since $M$ is not elementary, $\inf(M)$ is not elementary and therefore not isomorphic to $X_U$ or $\tau X_U$. 
Now $\ref{Utrivial}$ yields that 
$ \Ext(X_W,\tau \inf(M)) = 0 = \Hom(X_W,\tau^{-1} \inf(M)) \  \text{for all} \ W \in \Gr_{l,s}$.
By $\ref{RegularAndQuasi}$ the module $\inf(M)$ does not have the constant $l$-socle rank for $1 \leq l \leq s-r$. Note that $M$ is in regular and therefore $\inf(M)$ is a quasi-simple module. Now apply Lemma $\ref{lLemma}$.
\end{proof}

\begin{example}
Let $r \geq 3$ and $\cC$ be a regular component with $\cW(\cC) = 0$ such that $M_\cC$ is not a brick $($see $\ref{alotofcomponents})$ and in particular not elementary. Then $M_\cC \in \overline{\mathfrak{X}}_{1,r}$ and we can apply $\ref{corlift}$. Figure $\ref{Fig:RCGM}$ shows the regular component $\cD$ of $\cK_s$ containing $\inf(M_\cC)$. Every module in $\cD$ has constant $d:= 1 + s-r$ socle rank. But for $1 \leq q \leq r-s$, a module in this component has constant $q$-socle rank if and only if it lies in the shaded region.

\begin{figure*}[!h]
\centering 

\includegraphics[width=0.6\textwidth, height=150px]{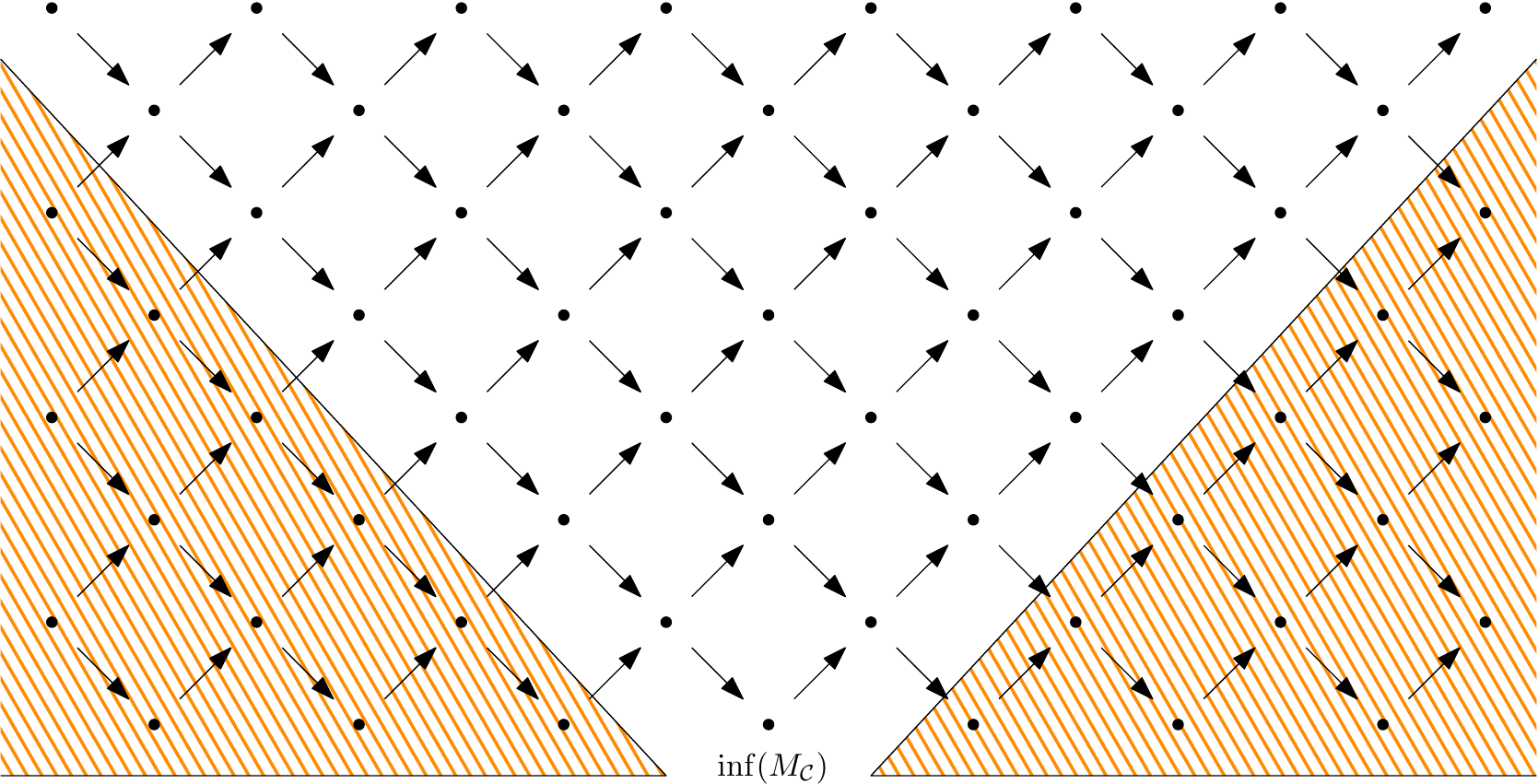}
\caption{Regular component $\cD$ containing $\inf(M_{\cC})$.}
\label{Fig:RCGM}
\end{figure*}
\end{example}

\section{Wild representation type}

\subsection{Wildness of strata}\label{WildnessSection}

As another application of the Simplification method and the inflation functor $\inf^s_s \colon \modd \cK_r \to \modd \cK_s$ we get the following result.

\begin{Theorem}
Let $s \geq 3$ and $1 \leq d \leq s-1$. Then $\Delta_d = \ESP_{d} \setminus \ESP_{d-1} \subseteq \modd \cK_s$ is a wild subcategory, where $\ESP_{0} := \emptyset$.
\end{Theorem}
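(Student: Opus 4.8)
The plan is to exhibit inside $\Delta_d$ a wild subcategory of the form $\cE(\{B\})$ produced by the Process of Simplification. Concretely I would look for a single brick $B \in \modd \cK_s$ that is regular (hence satisfies $\dim_k \Ext(B,B) \ge 2$ by \ref{Extgeq2}), lies in $\ESP_d$, and does \emph{not} lie in $\ESP_{d-1}$. Granting such a $B$, Proposition \ref{Simplification}$(d)$ shows that $\cE(\{B\})$ is a wild full subcategory. Every object of $\cE(\{B\})$ is filtered by copies of $B$; since $\ESP_d = \mathfrak{X}^\perp_{d,s}$ is closed under extensions (left-exactness of $\Hom(X_U,-)$, cf. \ref{HomTau}), the whole of $\cE(\{B\})$ sits in $\ESP_d$. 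Conversely the bottom term of any such filtration is a copy of $B$, so $B$ embeds into every object; as $\ESP_{d-1}$ is closed under submodules (\ref{HomTau}), no object of $\cE(\{B\})$ can lie in $\ESP_{d-1}$. Hence $\cE(\{B\}) \subseteq \ESP_d \setminus \ESP_{d-1} = \Delta_d$, and $\Delta_d$ is wild.

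For the choice of $B$ I would separate $d=1$ from $d \ge 2$. When $d=1$ we have $\Delta_1 = \ESP_1$, and it suffices to take $B = M_\cC$ a brick with $\cW(\cC)=0$; by \ref{twoexamples}$(2)$ such an $M_\cC$ lies in $\overline{\mathfrak{X}}_{1,s} \subseteq \ESP_1$, so the argument above already places the wild category $\cE(\{M_\cC\})$ in $\ESP_1 = \Delta_1$. For $2 \le d \le s-1$ I would take $B = E_d$ supplied by Lemma \ref{NOstable}: it is a regular brick, it lies in $\overline{\mathfrak{X}}_{d,s} \subseteq \ESP_d$, and it comes with $V,W \in \Gr_{1,s}$ satisfying $\Hom(X_V,E_d) = 0 \ne \Hom(X_W,E_d)$.

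The main obstacle is the verification that $E_d \notin \ESP_{d-1}$, i.e. the production of some $U \in \Gr_{d-1,s}$ with $\Hom(X_U,E_d) \ne 0$. Lemma \ref{NOstable} only hands us a \emph{one}-dimensional witness $W$, and this does not upgrade formally: for $W \subseteq U$ the epimorphism $X_W \twoheadrightarrow X_U$ of \ref{dim}$(c)$ induces only an \emph{injection} $\Hom(X_U,E_d) \hookrightarrow \Hom(X_W,E_d)$, which runs the wrong way. I would instead exploit the inflation structure $E_d = \inf^s_r(M)$. The arrows $\gamma_{r+1},\dots,\gamma_s$ act as zero on any inflated module, so $\langle e_{r+1},\dots,e_s\rangle$ consists entirely of socle directions and $\Soc_{U'}(E_d) = E_d \supsetneq (E_d)_2$ for $U' = \langle e_{r+1},\dots,e_s\rangle$. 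In the generic range $2 \le d \le s-2$ one has $r = 1+s-d$, so $\dim U' = s-r = d-1$ and $U := U'$ already gives $\Hom(X_U,E_d) \ne 0$; equivalently one may invoke $\ESP_{d-1} \subseteq \CSR_{d-1}$ together with $E_d \notin \CSR_{d-1}$ from \ref{RegularAndQuasi}$(b)$.

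The remaining edge case $d = s-1$, where $E_d = \inf^s_3(E)$ has only $s-3 = d-2$ trivial arrows, is the delicate point and must be handled by hand: I would augment $U'$ by a single direction $w \in k^3$ along which the Ringel module $E$ already has extra socle (for instance $w = (0,1,0)$, for which $E(\gamma_2)$ is non-injective), so that $U := \langle w\rangle \oplus \langle e_4,\dots,e_s\rangle \in \Gr_{d-1,s}$ satisfies $\Soc_U(E_d) = \ker x^{E_d}_w \supsetneq (E_d)_2$ and hence $\Hom(X_U,E_d) \ne 0$. With $E_d \notin \ESP_{d-1}$ secured in every case, the reduction of the first paragraph completes the proof.
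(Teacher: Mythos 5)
Your argument is correct, and the overall strategy (Simplification applied to a single regular brick $B$ placed in $\ESP_d$ but outside $\ESP_{d-1}$, then using extension-closure of $\ESP_d$ and submodule-closure of $\ESP_{d-1}$ to trap $\cE(\{B\})$ inside $\Delta_d$) is exactly the paper's. The difference lies in the choice of witness for $d \geq 2$. The paper takes $B = \inf^s_r(P_2)$ with $r = s-d+1$: since $\dimu \inf(P_2) = (1,\,s-(d-1))$, Proposition \ref{dim} identifies $\inf(P_2)$ with a test module $X_V$ for some $V \in \Gr_{d-1,s}$, so $\Hom(X_V,\inf(P_2)) \supseteq \End(\inf(P_2)) \neq 0$ and the exclusion from $\ESP_{d-1}$ is immediate; membership in $\ESP_d$ follows from $P_2 \in \ESP_1$ and \ref{goingup}, and brick/self-extension properties from \ref{RegularAndQuasi} and \ref{Extgeq2}. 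You instead reuse $E_d$ from Lemma \ref{NOstable}, which forces you to manufacture a $(d-1)$-dimensional witness subspace by hand. You correctly spot that the one-dimensional $W$ of \ref{NOstable} does not upgrade (the induced map $\Hom(X_U,E_d) \hookrightarrow \Hom(X_W,E_d)$ runs the wrong way), and your fix — using the $s-r$ socle directions killed by inflation, supplemented in the case $d=s-1$ by the non-injective direction $e_2$ of Ringel's module $E$ — is sound: in each case $\Soc_U(E_d) \supsetneq (E_d)_2$ for your chosen $U \in \Gr_{d-1,s}$, hence $\Hom(X_U,E_d) \neq 0$ by the computation in \ref{HomChar}. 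What the paper's choice buys is the elimination of precisely this case analysis; what yours buys is economy of new objects, since $E_d$ is already on hand from the $\GL_r$-stability discussion. Your $d=1$ case (a brick $M_\cC$ with $\cW(\cC)=0$, so $M_\cC \in \overline{\mathfrak{X}}_{1,s} \subseteq \ESP_1$) is a minor variant of the paper's (which $\tau$-shifts an arbitrary regular brick into $\ESP_1$ using \ref{HomTau}); both are fine.
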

\begin{proof}
For $d = 1$ consider a regular component $\cC$ for $\cK_s$ that contains a brick $F$. By $\ref{HomTau}$ we find a module $E$ in the $\tau$-orbit of $F$ that is in $\ESP_1$ and set $\cX:=\{E\}$. Then $E$ is brick since $\End(E,E) \cong \End(F,F) = k$ and $\dim_k \Ext(E,E) \geq 2$ by $\ref{Extgeq2}$. Therefore $\cE(\cX)$ is wild category $($see $\ref{Simplification})$. As $\ESP_1$ is closed under extensions, it follows $\cE(\cX) \subseteq \ESP_1$. Note that this case does only require the application of $\ref{Simplification}$.\\
Now let $d > 1$ and $r:= s-d+1 \geq 2$. Consider the projective indecomposable $\cK_r$-module $P:= P_2$ with $\dimu P = (1,r)$. By $\ref{RegularAndQuasi}$ $\inf(\cP)$ is a regular quasi-simple module  in $\modd \cK_s$ with $\dim_k \Ext(\inf(P),\inf(P)) \geq 2$. Since $P$ is in $\ESP_1$, we have $0 = \Hom(X_U,P)$ for all $U \in \Gr_{1,r}$. Hence $\ref{goingup}$ implies $0 = \Hom(X_U,\inf(\cP)) $ for all $U \in \Gr_{1+s-r} = \Gr_{d,s}$, so that $\inf(\cP)$ is in $\ESP_{d}$.\\
Let $\cX := \{\inf(\cP)\}$, then $\cE(\cX)$ is a wild category and since $\ESP_d$ is extension closed it follows $\cE(\cX) \subseteq \ESP_d$. Since  $\dimu \inf(P) = (1,s-d+1)$ we find $V \in \Gr_{d-1,s}$ $($see $\ref{dim})$ with $\inf(P) = X_V$ and $0 \neq \End(\inf(P)) = \Hom(X_V,\inf(P))$. That means $\inf(\cP) \notin \ESP_{d-1}$. Since $\ESP_{d-1}$ is closed under submodules we have $\cE(\cX) \cap \ESP_{d-1} = \emptyset$. Hence $\cE(\cX) \subseteq \ESP_d \setminus \ESP_{d-1}$. 
\end{proof}

\begin{Remarks} 
\begin{enumerate}
\item Note that all indecomposable modules in the wild category $\cE(\cX)$ are quasi-simple in $\modd \cK_s$ and $\cE(\cX) \subseteq \ESP_d \setminus \ESP_{d-1}$ are quasi-simple in $\modd \cK_s$.
\item For $1 \leq d < r$ we define $\EKP_d: = \{M \in \modd \cK_r \mid \forall U \in \Gr_{d,r} \colon \Hom(D\tau X_U,M) = 0\}$.
One can show that $M \in \EKP_d$ if and only if ${y^M_T}_{| M^d_1} \colon M^d_1 \to M$ is injective for all linearly independent tuples $T$ in $(k^r)^d$. From the definitions we get a chain of proper inclusions
$\ESP_{r-1} \supset \ESP_{r-2} \supset \cdots \ESP_{1} = \EKP_{1} \supset \EKP_2 \supset \cdots \supset \EKP_{r-1}$.
By adapting the preceding proof it follows that $\EKP_{r-1}$ is wild. Moreover it can be shown that for each regular component $\cC$ the set $(\EKP_{1} \setminus \EKP_{r-1}) \cap \cC$ is empty or forms a ray.
\end{enumerate}
\end{Remarks}

\begin{proposition}
Let $r = 2$, $B:= B(3,r)$ the Beilinson algebra and $\EKP(3,2)$ the full subcategory of modules in $\modd B(3,2)$ with the equal kernels property $($see \cite[2.1,3.12]{Wor1}$)$. The category $\EKP(3,2)$ is of wild representation type.
\end{proposition}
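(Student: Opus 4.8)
The plan is to transport a wild subcategory from the wild hereditary algebra $kQ$, with $Q\colon 1 \to 2 \rightrightarrows 3$, into $\modd B$ for $B := B(3,2)$, using the concealed structure of $B$ to run the Process of Simplification in $\modd kQ$ rather than in the non-hereditary algebra $B$. First I would record that $kQ$ is wild: the quadratic form $q_Q(x_1,x_2,x_3) = x_1^2 + x_2^2 + x_3^2 - x_1x_2 - 2x_2x_3$ has Gram matrix with leading principal minors $1$, $\tfrac34$ and $-\tfrac14$, so $q_Q$ is indefinite. Consequently the Simplification machinery of \ref{Simplification} (whose proof rests on results valid for every wild hereditary algebra) applies in $\modd kQ$, and since $q_Q$ takes values $\le -1$ on imaginary roots, there exist regular bricks $E_0$ with $\dim_k\Ext(E_0,E_0) = 1 - q_Q(\dimu E_0) \ge 2$, in analogy with \ref{Extgeq2}.

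Next I would use the concealed structure $B = \End_{kQ}(T)$ with $T$ a preprojective tilting module, and set $G := \Hom_{kQ}(T,-)$. Standard tilting theory makes $G$ an equivalence from the torsion class $\cT(T) = \{X \mid \Ext(T,X) = 0\}$ onto a torsion class in $\modd B$. Because $T$ is preprojective, $\tau T$ is preprojective, so $\Ext(T,X) \cong D\Hom(X,\tau T) = 0$ for every regular $kQ$-module $X$ (as $\Hom(\cR,\cP)=0$); hence all regular modules lie in $\cT(T)$. On this regular part $G$ is exact and fully faithful, preserves extensions ($\Ext_{kQ}(X,Y) \cong \Ext^1_B(GX,GY)$ for $X,Y \in \cT(T)$), commutes with $\tau$, and therefore sends regular bricks to regular bricks of equal self-extension dimension.

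Then I would carry out Simplification in $\modd kQ$ and push the result into $\EKP(3,2)$. By the homological description of $\EKP(3,2)$ in \cite[2.1,3.12]{Wor1}, namely $\EKP(3,2) = \{M \mid \Hom(X_U,M) = 0 \ \text{for all test modules}\ X_U\}$, this category is closed under extensions (left exactness of $\Hom(X_U,-)$) and meets each regular component of $B$ in a non-empty cone of successors of a quasi-simple module, exactly as in \ref{Cone2}. Starting from a regular brick $E_0$ with $\dim_k\Ext(E_0,E_0) \ge 2$ and replacing it by a sufficiently high shift $\tau^{-m}E_0$, I may assume $G(E_0) \in \EKP(3,2)$, since such a cone absorbs all sufficiently advanced quasi-simple modules of its component while $\tau$-shifts preserve both the brick property and the self-extension dimension. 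Now $\cE(\{E_0\}) \subseteq \modd kQ$ is a wild subcategory whose objects are regular and $E_0$-filtered by \ref{Simplification}; applying the exact equivalence $G$ yields the wild subcategory $\cE(\{G(E_0)\}) \subseteq \modd B$ of $G(E_0)$-filtered modules. As $\EKP(3,2)$ is extension closed and contains $G(E_0)$, it contains $\cE(\{G(E_0)\})$, which is wild, proving that $\EKP(3,2)$ has wild representation type.

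I expect the main obstacle to be the final transport step, that is, controlling the equal kernels property across the tilting equivalence. Concretely, one must use the description of $\EKP(3,2)$ in $\modd B$ carefully enough to know that it is extension closed and cone-shaped in regular components, so that a single regular brick with self-extensions can be shifted into it and then generate, by extensions, a whole wild Simplification subcategory; verifying that $G$ transports $\Ext$-data, bricks and the Auslander--Reiten translation faithfully between the regular parts of $\modd kQ$ and $\modd B$ is the technical heart of the argument.
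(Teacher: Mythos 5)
Your proposal is correct and follows the same overall architecture as the paper: pass to the wild hereditary path algebra $A = kQ$ of $1 \to 2 \rightrightarrows 3$, use that $B(3,2) \cong \End(T)$ for a preprojective tilting module $T$ so that $G = \Hom(T,-)$ identifies the regular parts, run the Simplification of \ref{Simplification} on a single regular brick with at least two-dimensional self-extensions (existence as in \cite{KerLuk2}), and transport the resulting wild subcategory into $\EKP(3,2)$. The one step you handle differently is how membership in $\EKP(3,2)$ is secured. The paper stays entirely on the $\modd A$ side: it pulls the test modules $X^1_\alpha$ back through $G$ to a family $U_\alpha$ of regular $A$-modules sharing a single dimension vector, applies the dual of \cite[4.6]{Ker3} to find one shift $\tau^{-l}M$ with $\Hom(U_\alpha,\tau^{-l}M)=0$ for all $\alpha$ simultaneously, and then uses the filtration Lemma \ref{Filtration} to propagate this vanishing to every object of $\cE(\{\tau^{-l}M\})$; only the homological characterization of $\EKP(3,2)$ from \cite{Wor1} is needed on the $B$-side. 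You instead invoke the cone description of $\EKP(3,2)\cap\cC$ inside regular components of $\modd B(3,2)$ (also from \cite{Wor1}) to place $G(\tau^{-m}E_0)$ in $\EKP(3,2)$ for $m \gg 0$, and then conclude by extension-closure of $\EKP(3,2)$. Both routes are valid; yours leans on more of the structure theory of $\EKP$ over $B(3,2)$ but avoids the simultaneous Hom-vanishing argument and the filtration lemma, while the paper's is more self-contained in $\modd A$ and makes explicit why the boundedness of the family $(U_\alpha)$ matters. Two minor points to tighten: the existence of a regular brick with $q_Q(\dimu E_0)\le -1$ should be cited (e.g. \cite[5.1]{KerLuk2}) rather than inferred only from indefiniteness of $q_Q$, since Kac's theorem alone does not produce bricks; and the claim that $G$ commutes with $\tau$ on the regular parts deserves the reference to the concealed-algebra results in \cite{Assem3} that the paper uses.
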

\begin{proof}
Consider the path algebra of the extended Kronecker quiver $Q = 1 \rightarrow 2\rightrightarrows 3$. Since the underlying graph of $Q$ is not a Dynkin or Euclidean diagram, the algebra $A$ is of wild representation type by \cite{Do1}. It is known that there exists a preprojective tilting module $T$ in $\modd A$ with $\End(T) \cong B(3,2)$, see for example \cite[]{Ung1} or \cite[4.]{Wor2}. We sketch the construction. The start of the preprojective component of $A$ is illustrated in Figure $\ref{Fig:RCTM}$ and the direct summands of $T$ are marked with a square. 

\begin{figure*}[!h]
\centering 
\includegraphics[width=0.5\textwidth, height=75px]{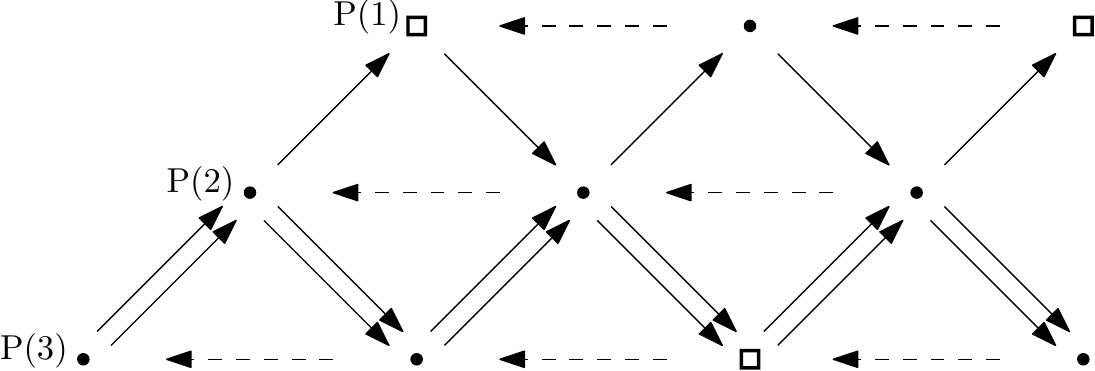}
\caption{Start of the preprojective component of $A$.}
\label{Fig:RCTM}
\end{figure*} One can check that $T = P(1) \oplus \tau^{-2} P(1) \oplus \tau^{-2} P(3)$ is a tilting module. Since preprojective components are standard \cite[2.4.11]{Ri5}, one can show that $\End(T)$ is given by the quiver in Figure $\ref{Fig:Bei}$, bound by the relation $\alpha_2 \alpha_1 + \beta_2 \beta_1$. Moreover it follows from the description as a quiver with relations that $\End(T) \cong B(3,2)$.

\begin{figure}[!h]
\centering 
\tikzstyle{every node}=[]
\tikzstyle{edge from child}=[]
\begin{tikzpicture}[->,>=stealth',auto,node distance=3cm,
  thick,main node/.style={circle,draw,font=\sffamily\Large\bfseries}]

  \node (1) {$\bullet$};
  \node (2) [right of=1] {$\bullet$};
  \node (3) [right of=2] {$\bullet$};
   \node[color=black] at (1.4,0.7) {$\alpha_{1}$};
   \node[color=black] at (1.4,-0.8) {$\beta_{1}$};
      \node[color=black] at (4.4,0.7) {$\alpha_{2}$};
   \node[color=black] at (4.4,-0.8) {$\beta_{2}$};
   \node[color=black] at (0,-0.3) {$1$};
   \node[color=black] at (3,-0.3) {$2$};
   \node[color=black] at (6,-0.3) {$3$};
  \path[every node/.style={font=\sffamily\small}]
    (1) edge[bend right] node [left] {} (2)
    (1) edge[bend left] node [left] {} (2)
    (2) edge[bend right] node [left] {} (3)
    (2) edge[bend left] node [left] {} (3)
    ;
\end{tikzpicture}
\caption{Ordinary quiver of $\End(T)$.}
\label{Fig:Bei}
\end{figure}
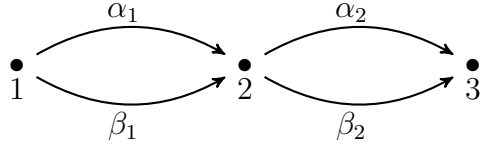
 Since $A$ is hereditary, the algebra $B(3,2)$ is a concealed algebra \cite[VIII 4.6]{Assem1}. By \cite[XVIII 5.0,5.1]{Assem3} the functor $\Hom(T,-) \colon \modd A \to \modd B$ induces an equivalence $G$ between the regular categories $\add \cR(A)$ and $\add \cR(B)$ and we have an isomorphism between the two Grothendieck groups $f \colon K_0(A) \to K_0(B)$ with $\dimu G(M) = \dimu \Hom(T,M) = f(\dimu M)$ for all $M \in \modd A$. Now we will make use of the homological characterisation \cite[2.5]{Wor1} \[\EKP(3,2) = \{ M \in \modd B(3,2) \mid \forall \alpha \in k^r \setminus \{0\}: \Hom(X^1_\alpha,M) = 0 \}.\]
 At first note that each $X^1_\alpha$ is regular; we have $\dim_k \Hom(X^1_\alpha,X^1_\alpha) \neq 0$ and $0 \neq \Hom(X^1_\alpha,X^1_\alpha) = \Ext(X^1_\alpha,\tau X^1_\alpha)$  \cite[3.15]{Wor3}, therefore $X^1_\alpha$ is not in $\EIP(3,2) \cup \EKP(3,2)$ and by \cite[2.7]{Wor1} regular. Moreover $\dimu X^1_\alpha$ is independent of $\alpha$. Hence we find for each $\beta \in k^r\setminus \{0\}$ a regular indecomposable module $U_\beta$ in $\modd A$ with $G(U_\beta) = X^1_\beta$ and $\dimu U_\beta = \dimu U_\alpha$ for all $\alpha \in k^r\setminus \{0\}$. Now let $M$ be in $\modd A$ a regular brick with $\dim_k \Ext(M,M) \geq 2$ $($see \cite[5.1]{KerLuk2}$)$. By  the dual version of \cite[4.6]{Ker3} we find $l \in \NN_0$ with $\Hom(U_\alpha,\tau^{-l} M) = 0$ for all $\alpha \in k^r\setminus \{0\}$. Set $N := \tau^{-l} M$ and $\cX := \{N\}$. $N$ is a regular brick with $\dim_k \Ext(N,N) = \dim_k \Ext(M,M) \geq 2$ and therefore $\cE(\cX)$ is a wild category in $\add \cR(A)$ \cite[1.4]{KerLuk2}. By $\ref{Filtration}$ we have $\Hom(U_\alpha,L) = 0$ for all $L$ in $\cE(\cX)$ and all $\alpha \in k^r\setminus \{0\}$. Hence $0 = \Hom(U_\alpha,L) = \Hom(G(U_\alpha),G(L)) = \Hom(X^1_\alpha,L)$ for all $\alpha \in k^r\setminus \{0\}$. This shows that the essential image of $G$ is a wild subcategory contained in $\EKP(3,2)$.
\end{proof}

\subsection{The module category of $E_r$}

Throughout this section we assume that $\Char(k) = p > 0$ and $r \geq 2$. Moreover let $E_r$ be a $p$-elementary abelian group of rank $r$ with generating set $\{g_1,\ldots,g_r\}$. For $x_i := g_i - 1$ we get an isomorphism $kE_r \cong k[X_1,\ldots,X_r]/(X^p_1,\ldots,X^p_r)$ of $k$-algebras by sending $X_i$ to $ x_i$ for all $i$.
We recall the definition of the functor $\fF \colon \modd \cK_r \to \modd kE_r$ introduced in \cite{Wor1}. Given a module $M$, $\fF(M)$ is by definition the vector space $M$ and $x_i . m := \gamma_i.m = \gamma_i .m_1 + \gamma_i.m_2 = \gamma_i.m_1$ where $m_i = e_i.m$. Moreover $\fF$ lets the morphisms unchanged, i.e. $\fF(f) \colon \fF(M) \to \fF(N); \fF(f)(m) = f(m)$ for all $f \colon M \to N$.

\begin{Definition} \cite[2.1]{CFP1}
Let $\mathbb{V} := \langle x_1,\ldots,x_r \rangle_k \subseteq \rad(kE_r)$. For $U$ in $\Gr_{d,\mathbb{V}}$ with basis $u_1,\ldots,u_d$ and a $kE_r$-module $M$ we set
\[\Rad_U(M) := \sum_{u \in U} u \cdot M = \sum^d_{i=1} u_i \cdot M, \ \text{and} \]
\[\Soc_U(M) := \{ m \in M \mid  \forall u \in U: u \cdot m = 0\} = \bigcap^d_{i=1} \{ m \in M \mid u_i \cdot m = 0 \}.\]
\end{Definition}

\begin{Definition} \cite[3.1]{CFP1}
Let $M \in \modd kE_r$ and $1 \leq d < r$. 
\begin{enumerate}[topsep=0em, itemsep= -0em, parsep = 0 em, label=$(\alph*)$]	
\item $M$ has constant $d$-$\Rad$ rank $($respectively, $d$-$\Soc$ rank$)$ if the dimension of $\Rad_U(M)$ $($respectively, $\Soc_U(M))$ is independent of the choice of $U \in \Gr_{d,\mathbb{V}}$.
\item $M$ has the equal $d$-$\Rad$ property $($respectively, $d$-$\Soc$ property$)$ if $\Rad_U(M)$ $($respectively, $\Soc_U(M))$ is independent of the choice of $U \in \Gr_{d,\mathbb{V}}$.
\end{enumerate}
\end{Definition}

\begin{proposition}\label{FunctorESP}
Let $M$ be a non-simple indecomposable $\cK_r$-module and $1 \leq d < r$. 
\begin{enumerate}[topsep=0em, itemsep= -0em, parsep = 0 em, label=$(\alph*)$]	
\item $M$ is in $\CSR_d$ if and only if $\fF(M)$ has constant $d$-$\Soc$ rank.
\item $M$ is in $\ESP_d$ if and only if $\fF(M)$ has the equal $d$-$\Soc$ property.
\end{enumerate}
\end{proposition}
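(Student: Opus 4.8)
The plan is to relate the operators on $\fF(M)$ directly to the operators $x^M_u$ appearing in the $\cK_r$-side definitions, so that the $\Soc_U$ subspaces on both sides coincide up to the semisimple part $M_2$. Recall that for $u \in \mathbb{V}$ with $u = \sum_i u_i x_i$ and $m \in \fF(M)$ we have $u \cdot m = \sum_i u_i \gamma_i \cdot m = \sum_i u_i \gamma_i \cdot m_1 = x^M_u(m_1)$, where $m_1 = e_1 \cdot m$; in particular $u$ annihilates $M_2 = e_2 \cdot M$ and $u \cdot M \subseteq M_2$. This shows that the $kE_r$-operator induced by $u$ agrees with the $\cK_r$-operator $x^M_u$ under the natural identification of the underlying vector space, with the caveat that $x^M_u$ is viewed as a map $M \to M$ that vanishes on $M_2$.

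First I would identify $\Gr_{d,\mathbb{V}}$ with $\Gr_{d,r}$ via the basis $x_1,\ldots,x_r \leftrightarrow e_1,\ldots,e_r$, so that a subspace $U \in \Gr_{d,\mathbb{V}}$ with basis $u_1,\ldots,u_d$ corresponds to $U' \in \Gr_{d,r}$ and the computation above gives $\bigcap_{i=1}^d \ker(u_i \cdot -) = \bigcap_{i=1}^d \ker(x^M_{u_i}) = \Soc_{U'}(M)$, i.e. the $kE_r$-socle $\Soc_U(\fF(M))$ equals the $\cK_r$-socle $\Soc_{U'}(M)$ as subspaces of the common underlying space. Granting this equality, statement $(a)$ is immediate: the dimension of $\Soc_U(\fF(M))$ is independent of $U \in \Gr_{d,\mathbb{V}}$ if and only if the dimension of $\Soc_{U'}(M)$ is independent of $U' \in \Gr_{d,r}$, which is precisely $M \in \CSR_d$ by definition.

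For statement $(b)$ I would invoke Lemma \ref{Radical}$(a)$: since $M$ is indecomposable and not simple, $M \in \ESP_d$ holds if and only if $\Soc_{U'}(M) = M_2$ for all $U' \in \Gr_{d,r}$. On the $kE_r$-side, the equal $d$-$\Soc$ property asks that $\Soc_U(\fF(M))$ be literally the same subspace for all $U$. Using the identification $\Soc_U(\fF(M)) = \Soc_{U'}(M)$, the equal property on the $kE_r$-side is equivalent to constancy of the subspace $\Soc_{U'}(M)$; and since $M_2 \subseteq \Soc_{U'}(M)$ always and $\bigcap_{U'} \Soc_{U'}(M) = M_2$ (the intersection over the coordinate subspaces already gives $M_2$, again by \cite[5.1.1]{Far2} as used in Lemma \ref{Radical}), constancy of this subspace forces the common value to be $M_2$. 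Thus the equal $d$-$\Soc$ property for $\fF(M)$ is equivalent to $\Soc_{U'}(M) = M_2$ for all $U'$, which is $M \in \ESP_d$.

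The main obstacle I anticipate is bookkeeping the distinction between the operator $x^M_u \colon M \to M$ (which is defined on all of $M$ but vanishes on $M_2$) and the $kE_r$-action of $u$ (which is genuinely multiplication on $\fF(M)$), and confirming that they induce the \emph{same} kernel subspace rather than merely kernels of equal dimension. This is exactly where the hypothesis that $M$ is non-simple is used, via Lemma \ref{Radical}, to pin down the common value of the socle as $M_2$ in part $(b)$; for part $(a)$ only the equality of subspaces (hence of dimensions) is needed, so the argument is cleaner there. I would therefore state the key identity $\Soc_U(\fF(M)) = \Soc_{U'}(M)$ as the central computation and deduce both parts formally from it together with Lemma \ref{Radical}.
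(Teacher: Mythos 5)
Your proposal is correct and follows essentially the same route as the paper: the central computation in both is that for $u=\sum_i \alpha^i x_i$ the operator $l(u)$ on $\fF(M)$ has the same kernel as $x^M_\alpha$ on $M$, so that $\Soc_U(\fF(M))=\Soc_{\langle T\rangle}(M)$ under the evident bijection between $\Gr_{d,\mathbb{V}}$ and $\Gr_{d,r}$, from which both parts follow. Your detour through Lemma \ref{Radical} in part $(b)$ is harmless but not needed, since both "equal" properties literally assert constancy of the very same subspace.
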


\begin{proof}
We fix $\mathbb{V}:= \langle x_1,\ldots,x_r \rangle_k \subseteq \rad(kE_r)$. In the following we denote for $u \in \rad(kE_r)$ with $l(u) \colon M \to M$ the induced linear map on $M$. Let $U \in \Gr_{d,\mathbb{V}}$ with basis $\{u_1,\ldots,u_d\}$, write
$u_j = \sum^r_{i=1} \alpha^i_j x_i$ for all $1 \leq j \leq d$ and 
set $\alpha_j = (\alpha^1_j,\ldots,\alpha^r_j)$. Then $T := (\alpha_1,\ldots,\alpha_d)$ is linearly independent  and $\ker (l(u_j))= \ker(\sum^r_{i=1} \alpha^i_{j} l(x_i)) = \ker  (\sum^r_{i=1} \alpha^i_j \gamma_i) = \ker (x^M_{\alpha_j})$. It follows
\[ \Soc_U(\fF(M)) = \bigcap^d_{i=1} \ker(l(u_i)) =  \bigcap^d_{i=1} \ker(x^M_{\alpha_i}) = \Soc_{\langle T \rangle}(M).\]
Hence $M \in \CSR_d$ implies that $\fF(M)$ has constant $d$-$\Soc$ rank.\\
Now assume that $T = (\alpha_1,\ldots,\alpha_d)$ is linearly independent and set $u_j := \sum^r_{i=1} \alpha^i_j x_i$. Then $U := \langle u_1,\ldots,u_d\rangle \in \Gr_{d,\mathbb{V}}$ and $\Soc_{\langle T \rangle}(T) = \Soc_{U}(\fF(M))$. We have shown that $M$ is in $\CSR_d$ if and only if $\fF(M)$ has constant $d$-$\Soc$ rank. The other equivalence follows in the same fashion.
\end{proof}

\noindent For $1 \leq d < r$ we denote with $\ESP_{2,d}(E_r)$ the category of modules in $\modd kE_r$ of Loewy length $\leq 2$ with the equal $d$-Soc property. As an application of Section $\ref{WildnessSection}$ we get a generalization of \cite[5.6.12]{Be1} and \cite[1]{Bo1}. 

\begin{corollary}
Let $\Char(k) > 0$, $r \geq 3$ and $1 \leq d \leq r-1$. Then $\ESP_{2,d}(E_r) \setminus \ESP_{2,d-1}(E_r)$ has wild representation type.
\end{corollary}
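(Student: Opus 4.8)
The plan is to deduce the corollary from the Theorem of Section~\ref{WildnessSection}, which produces inside $\modd \cK_r$ a wild subcategory contained in the stratum $\Delta_d = \ESP_d \setminus \ESP_{d-1}$, and then to push this subcategory into $\modd kE_r$ along the functor $\fF$. Concretely, taking $s = r \geq 3$ in that Theorem yields a class $\cX$ of pairwise orthogonal bricks (a single regular brick, namely $\inf(P_2)$ for $d > 1$ and a suitable module of $\ESP_1$ for $d = 1$) such that the filtration category $\cE(\cX)$ is wild and satisfies $\cE(\cX) \subseteq \ESP_d \setminus \ESP_{d-1}$. By~\ref{Simplification} every object of $\cE(\cX)$ is regular, hence non-simple, so Proposition~\ref{FunctorESP} is applicable to each indecomposable occurring in it.

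First I would record the image of $\cE(\cX)$ under $\fF$. By the very definition of $\fF$ each module $\fF(M)$ has Loewy length $\leq 2$, and by Proposition~\ref{FunctorESP}(b), applied to a non-simple indecomposable $M \in \cE(\cX)$, the module $\fF(M)$ has the equal $d$-$\Soc$ property precisely because $M \in \ESP_d$, while it fails the equal $(d-1)$-$\Soc$ property precisely because $M \notin \ESP_{d-1}$. Hence $\fF$ sends the indecomposables of $\cE(\cX)$ into $\ESP_{2,d}(E_r) \setminus \ESP_{2,d-1}(E_r)$; this step pins the image inside the correct stratum and uses both halves of the equivalence in~\ref{FunctorESP}.

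Next I would transfer the wild representation type itself. The wildness of $\cE(\cX)$ coming from~\ref{Simplification} is witnessed by a representation embedding $\Theta \colon \modd k\langle X, Y\rangle \to \cE(\cX)$, and composing with $\fF$ gives $\fF \circ \Theta \colon \modd k\langle X, Y\rangle \to \ESP_{2,d}(E_r) \setminus \ESP_{2,d-1}(E_r)$. It then remains to verify that this composite is again a representation embedding. Since $\fF$ is exact and faithful straight from its definition, and since by the properties of $\fF$ established in \cite{Wor1} it preserves indecomposability and reflects isomorphisms on the relevant class of modules (those without simple projective summands, which contains all regular modules and hence all objects of $\cE(\cX)$), the composite preserves indecomposability and reflects isomorphism classes. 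As the source is wild, so is the target subcategory.

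The hard part will be the transfer of wildness in the third step rather than the bookkeeping of the first two: I must make sure that $\fF$ does not collapse the wild family, i.e. that indecomposability and pairwise non-isomorphism survive the passage from $\modd \cK_r$ to $\modd kE_r$, which forgets the grading by the two vertices of $\Gamma_r$. This is exactly where the cited properties of $\fF$ from \cite{Wor1} are indispensable, and the regularity of the objects of $\cE(\cX)$ is what guarantees they apply. A final routine point is that non-membership in $\ESP_{2,d-1}(E_r)$ is inherited by every indecomposable of the image, which is again precisely the $\ESP_{d-1}$-half of Proposition~\ref{FunctorESP}.
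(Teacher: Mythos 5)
Your proposal is correct and follows essentially the same route as the paper: take the wild subcategory $\cE(\cX) \subseteq \ESP_d \setminus \ESP_{d-1}$ produced by the Theorem of Section~\ref{WildnessSection}, use Proposition~\ref{FunctorESP} to place its image in the correct stratum of $\modd_2 kE_r$, and transfer wildness via the faithfulness, exactness and isomorphism-reflection of $\fF$ established in \cite{Wor1}. The paper's proof is just a terser version of the same argument, citing \cite[2.1.2]{Wor1} for exactly the properties of $\fF$ you single out as the essential point.
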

\begin{proof}
Let $1 \leq c < r$. By \cite[2.1.2]{Wor1} and  $\ref{FunctorESP}$, a restriction of $\fF$ to $\ESP_c$ induces a faithful exact functor
\[\fF_{c} \colon \ESP_c \to \modd_2 kE_r\]
that reflects isomorphisms and with essential image $\EIP_{2,c}(E_r)$. Let $\cE \subseteq \ESP_d \setminus \ESP_{d-1}$ be a wild subcategory. Since $\fF_{d-1}$ and $\fF_{d}$ reflect isomorphisms we have $\fF(E) \in \ESP_{2,d}(E_r) \setminus \ESP_{2,d-1}(E_r)$ for all $E \in \cE$. Hence the essential image of $\cE$ under $\fF$ is a wild category.
\end{proof}

\begin{corollary}
Assume that $\Char(k) = p > 2$, then the full subcategory of modules with the equal kernels property in $\modd kE_2$ and Loewy length $\leq 3$ is of wild representation type.
\end{corollary}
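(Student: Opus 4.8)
The plan is to reduce the claim about $\modd kE_2$ of Loewy length $\leq 3$ to a statement about $\modd B(3,2)$, which is exactly the content of the preceding proposition. Recall from the introduction that the functor $\fF$ for the Beilinson algebra $B(n,r)$ has essential image consisting of all $kE_r$-modules of Loewy length $\leq n$; here $n = 3$ and $r = 2$, so the essential image of the relevant functor $\fF \colon \modd B(3,2) \to \modd kE_2$ consists precisely of the modules of Loewy length $\leq 3$. Thus the first step is to invoke the analogue of Proposition $\ref{FunctorESP}$ for $B(3,2)$ together with the homological characterization of $\EKP(3,2)$ used in the previous proof, namely that $\fF$ restricts to a faithful exact functor on the equal-kernels subcategory $\EKP(3,2)$ whose essential image lands inside the equal-kernels modules of Loewy length $\leq 3$ in $\modd kE_2$.

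Concretely, I would argue as follows. By the immediately preceding proposition, $\EKP(3,2) \subseteq \modd B(3,2)$ is of wild representation type; in fact the proof exhibits an explicit wild subcategory, namely the essential image of the category $\cE(\cX)$ under the equivalence $G$, sitting inside $\EKP(3,2)$. Let $\cW \subseteq \EKP(3,2)$ denote this wild subcategory. Since $\Char(k) = p > 2$, the functor $\fF \colon \modd B(3,2) \to \modd kE_2$ is faithful and exact, reflects isomorphisms, and its essential image is the full subcategory of modules of Loewy length $\leq 3$ (this is the $B(3,2)$-instance of the properties of $\fF$ established in $\cite{Wor1}$). Moreover, by the $\EKP$-compatibility of $\fF$, every module in $\cW$ is sent to a module with the equal kernels property and Loewy length $\leq 3$ in $\modd kE_2$.

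It then remains to check that $\fF$ carries the wildness of $\cW$ over to its essential image. Because $\fF$ is faithful, exact, and reflects isomorphisms, it sends a wild control subcategory (a copy of $\modd k\langle X, Y\rangle$, or equivalently a faithful embedding of the representations of a wild quiver) to one inside $\modd kE_2$: a faithful exact isomorphism-reflecting functor preserves indecomposability and non-isomorphism, so the parametrizing family witnessing wildness in $\cW$ maps to a parametrizing family of pairwise non-isomorphic indecomposables with the equal kernels property and Loewy length $\leq 3$. Hence the full subcategory of $\modd kE_2$ of modules with the equal kernels property and Loewy length $\leq 3$ contains a wild subcategory, and is therefore of wild representation type.

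The main obstacle, and the only genuinely non-formal point, is verifying that the restriction $\fF|_{\EKP(3,2)}$ really does have the three properties (faithful, exact, reflecting isomorphisms) with the correct essential image, and that the hypothesis $p > 2$ is what guarantees this; for $r = 2$ the passage from $\cK_2$-type data to $B(3,2)$ and then to $kE_2$ must be set up through the Beilinson algebra rather than through $\inf$, so one must be careful that the homological characterization of $\EKP(3,2)$ via the regular test-modules $X^1_\alpha$ (used in the previous proof) is exactly what $\fF$ translates into the equal kernels property on the $E_2$-side. Once these functorial properties are in place, transporting wildness is routine.
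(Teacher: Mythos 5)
Your proposal is correct and follows essentially the same route as the paper: the paper's proof simply combines the preceding proposition (wildness of $\EKP(3,2)$) with the citation \cite[2.3]{Wor1} (for $n=3\leq p$, $r=2$), which states precisely that $\fF_{\EKP(3,2)}\colon \modd B(3,2)\to \modd_3 kE_2$ is a representation embedding with essential image in $\EKP(E_2)$ --- i.e.\ exactly the faithful, exact, isomorphism-reflecting transfer of wildness you describe, with $p>2$ entering through the condition $n\leq p$.
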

\begin{proof}
By \cite[2.3]{Wor1} $(n = 3 \leq p, r=2)$ the functor $\fF_{\EKP(3,2)} \colon \modd B(3,2) \to \modd_3 kE_2$ is a representation embedding with essential image in $\EKP(E_2)$.
\end{proof}


\section*{Acknowledgement}
The results of this article are part of my doctoral thesis, which I am currently
writing at the University of Kiel. I would like to thank my advisor Rolf Farnsteiner for fruitful discussions, his continuous support and helpful comments on an ealier version of this paper.
I also would like to thank the whole research team for the very pleasent working atmosphere and the encouragement throughout my studies.\\
Furthermore, I thank Otto Kerner for answering my questions on hereditary algebras and giving helpful comments, and Claus Michael Ringel for sharing his insights on elementary modules for the Kronecker algebra.

\begin{bibdiv}
\begin{biblist}

\bib{Assem1}{book}{
title={Elements of the Representation Theory of Associative Algebras, I}
subtitle={Techniques of Representation Theory}
series={London Mathematical Society Student Texts}
author={I. Assem},
author={D. Simson},
author={A. Skowro\'nski},
publisher={Cambridge University Press},
date={2006},
address={Cambridge}
}

\bib{Assem3}{book}{
title={Elements of the Representation Theory of Associative Algebras, III},
subtitle={Representation-Infinite Tilted Algebras},
series={London Mathematical Society Student Texts}
author={I. Assem},
author={D. Simson},
author={A. Skowro\'nski},
publisher={Cambridge University Press},
date={2007},
address={Cambridge}
}

\bib{Be1}{article}{
title={Representations of elementary abelian p-groups and vector bundles},
author={D. Benson},
status={Preprint version 2.2.1.}
}

\bib{Bo1}{article}{
title={The representation type of elementary abelian $p$-groups with respect to the modules of constant Jordan type},
author={V. M. Bondarenko},
author={I. V. Lytvynchuk},
journal={Algebra and Discrete Mathematics},
volume={14},
date={2012},
pages={29-36},
number={1}
}

\bib{CFP1}{article}{
title={Representations of elementary abelian $p$-groups and bundles on Grassmannians},
author={Carlson, J. F.},
author={Friedlander, E. M.},
author={Pevtsova, J.},
date={2012},
journal={Advances in Mathematics},
volume={229},
number={5},
pages={2985-3051}
}

\bib{CFS1}{article}{
title={Modules for $\ZZ_p \times \ZZ_p$},
author={J. F. Carlson},
author={E. M. Friedlander},
author={A. Suslin},
journal={Commentarii Math. Helv.},
date={2011},
volume={86},
pages={609-657}
}

\bib{BoChen1}{article}{
title={Dimension vectors in regular components over wild
Kronecker quivers},
journal={Bulletin des Sciences Math\'{e}matiques},
volume={137},
pages={730-745},
author={B. Chen},
date={2013}
}

\bib{Do1}{book}{

title={The representation theory of finite graphs and associated algebras},
author={P. Donovan},
author={M. R. Freislich},
series={Carleton Math. Lecture Notes}
publisher={},
volume={5},
date={1973},
publisher={Carleton Univ.}
address={Ottawa}
}

\bib{Far1}{unpublished}{
title={Categories of modules given by varieties of $p$-nilpotent operators},
status={Preprint, arXiv:1110.2706v1},
author={R. Farnsteiner},
date={2011}
}

\bib{Far2}{unpublished}{
author ={R. Farnsteiner},
title={Lectures Notes: Nilpotent Operators, Categories of Modules, and Auslander-Reiten Theory},
note={http://www.math.uni-kiel.de/algebra/de/farnsteiner/material/Shanghai-2012-Lectures.pdf},
date={2012}
}

\bib{Gab1}{article}{
title={Indecomposable Representations II},
author={P. Gabriel},
volume={XI},
pages={81-104},
journal={Symposia Mathematica. Inst. Naz. Alta Mat.},
date={1973}
}

\bib{Kac}{article}{
author={V.G. Kac},
title={Root systems, representations of quivers and invariant theory},
journal={in: Invariant Theory, Montecatini, Springer Lecture Notes in Mathematics},
volume={996},
pages={74-108},
date={1982}
}

\bib{Ker1}{article}{
title={Exceptional Components of Wild Hereditary Algebras},
author={O. Kerner},
journal={Journal of Algebra},
volume={152},
pages={184-206},
number={1},
date={1992}
}

\bib{Ker2}{article}{
title={More Representations of Wild Quivers},
author={O. Kerner},
journal={Contemporary Mathematics},
series={Expository Lectures on Representation Theory}
volume={607},
date={2014},
pages={35-66}
}

\bib{Ker3}{article}{
title={Representations of Wild Quivers},
journal={Representation theory of algebras and related topics, CMS Conf. Proc.},
volume={19},
date={1996},
pages={65-107}, 
author={O. Kerner},
}

\bib{KerLuk1}{article}{
title={Elementary modules},
author={O. Kerner},
author={F. Lukas},
journal={Math. Z.},
volume={223},
pages={421-434},
date={1996}
}

\bib{KerLuk2}{article}{
title={Regular modules over wild hereditary algebras},
author={O. Kerner},
author={F. Lukas},
journal={in Proc. Conf. ICRA '90, CMS Conf. Proc.},
volume={11},
pages={191-208},
date={1991}
}

\bib{Ri3}{article}{
title={Finite-dimensional hereditary algebras of wild representation type},
author={C.M. Ringel},
journal={Math. Z.},
volume={161},
pages={235-255},
date={1978}
}

\bib{Ri4}{article}{
title={Representations of $K$-species and bimodules},
author={C.M. Ringel},
date={1976},
journal={Journal of Algebra},
volume={41},
number={2},
pages={269-302}
}

\bib{Ri5}{book}{
title={Tame Algebras and Integral Quadratic Forms},
author={C.M. Ringel},
date={1984},
publisher={Springer Verlag}
volume={1099},
series={ Lecture Notes in Mathematics}
pages={269-302}
}

\bib{Ung1}{article}{
title={The concealed algebras of the minimal wild, hereditary algebra},
author={L. Unger},
journal={Bayreuther Mathematische Schriften},
volume={31},
date={1990},
pages={145--154}
}

\bib{Wor2}{article}{
title={AR-components for generalized Beilinson algebras},
author={J. Worch},
journal={Proceedings of the AMS},
volume={143},
date={2015},
pages={4271--4281}
}

\bib{Wor1}{article}{
title={Categories of modules for elementary abelian p-groups and generalized Beilinson algebras},
author={J. Worch},
journal={J. London Math. Soc.},
volume={88},
date={2013},
pages={649-668}
}

\bib{Wor3}{article}{
author ={J. Worch},
title={Module categories and Auslander-Reiten theory for generalized Beilinson algebras. PhD-Thesis},
date={2013}
}
\end{biblist}
\end{bibdiv}

\end{document}